\DeclareMathAlphabet{\mathpzc}{OT1}{pzc}{m}{it} 
\newtheorem{Th}{Theorem}[section]              
\newtheorem{Cor}{Corollary}[section]
\newtheorem{Prop}{Proposition}[section]
\newtheorem{Lem}{Lemma}[section]
\title[Hankel multipliers of Laplace transform type]{Hankel multipliers of Laplace transform type}
\author[J.J. Betancor]{J.J. Betancor}
\address{Departamento de Análisis Matemático\\
Universidad de la Laguna\\
Campus de Anchieta, Avda. Astrofísico Francisco Sánchez, s/n\\
38271 La Laguna (Sta. Cruz de Tenerife), Spain}
\email{jbetanco@ull.es}
\author[A.J. Castro]{A.J. Castro}
\address{Departamento de Análisis Matemático\\
Universidad de la Laguna\\
Campus de Anchieta, Avda. Astrofísico Francisco Sánchez, s/n\\
38271 La Laguna (Sta. Cruz de Tenerife), Spain}
\email{alecas000@gmail.com}
\author[J. Curbelo]{J. Curbelo}
\address{Instituto de Ciencias Matemáticas\\
Consejo Superior de Investigaciones Científicas\\
Serrano 121\\
28006 Madrid, Spain} \email{jezabel.curbelo@icmat.es}
\thanks{This paper is partially supported by MTM2007/65609.}
\begin{document}

  \maketitle                                  

  \begin{abstract}
    In this paper we prove that the Hankel multipliers of Laplace transform type on $(0,\infty)^n$ are of weak type $(1,1)$. Also we analyze $L^p$-boundedness properties
    for the imaginary powers of Bessel operator on $(0,\infty)^n.$
  \end{abstract}

  \section{Introduction}

   We consider the Hankel transformation on $(0,\infty)^n$ defined by
   $$ h_{\lambda_1,\ldots,\lambda_n}(f)(x)= \int_{(0,\infty)^n}\prod_{j=1}^n (x_jy_j)^{-\lambda_j+1/2}J_{\lambda_j-1/2}(x_jy_j) f(y) \prod_{j=1}^n y_j^{2\lambda_j}dy
      , \quad  x\in(0,\infty)^n,$$
   where $\lambda_j>-1/2,$ $j=1,\ldots, n.$ Here $J_\nu$ denotes the Bessel function of the first kind and order $\nu.$ $h_{\lambda_1, \ldots, \lambda_n}$ maps
   $L^1((0,\infty)^n, \overset{n}{\underset{j=1}{\prod}} x_j^{2\lambda_j}dx)$ into $L^\infty((0,\infty)^n, \overset{n}{\underset{j=1}{\prod}} x_j^{2\lambda_j }dx)$
   ($=L^\infty((0,\infty)^n, dx)$) and it can be extended as an isometry to $L^2((0,\infty)^n, \overset{n}{\underset{j=1}{\prod}} x_j^{2\lambda_j }dx)$ being
   $h_{\lambda_1,\ldots,\lambda_n}^{-1}= h_{\lambda_1,\ldots,\lambda_n}$ on $L^2((0,\infty)^n, \overset{n}{\underset{j=1}{\prod}} x_j^{2\lambda_j }dx)$ (see \cite{BS}).\\

   If $m\in L^\infty((0,\infty)^n,dx)$ the multiplier operator $T_{\lambda_1,\ldots,\lambda_n}^m$ associated with the Hankel transformation
   $h_{\lambda_1, \ldots, \lambda_n}$ is defined by
   $$ T_{\lambda_1,\ldots,\lambda_n}^m(f)=h_{\lambda_1, \dots, \lambda_n}(mh_{\lambda_1,\ldots,\lambda_n}(f)), \quad f\in L^2((0,\infty)^n, \overset{n}{\underset{j=1}{\prod}} x_j^{2\lambda_j }dx).$$
   It is clear that $T_{\lambda_1,\ldots,\lambda_n}^m$ is bounded from $L^2((0,\infty)^n, \overset{n}{\underset{j=1}{\prod}} x_j^{2\lambda_j }dx)$ into itself.
   $L^p$-boundedness properties for Hankel multipliers have been studied in \cite{BMR}, \cite{BR}, \cite{GS}, \cite{GTre}, \cite{GSt}, \cite{Gu}, \cite{Ka}, \cite{NS} and \cite{STre}, amongst others.\\

   In this paper, following the ideas presented in \cite{GMMST} and in \cite{St1} we study Hankel multiplier operator when $m$ is of Laplace transform type. For every $\lambda>-1/2$
   we denote by $\Delta_{\lambda,x}$ the Bessel operator
   $$ \Delta_{\lambda,x}= -x^{-2\lambda}\frac{d}{dx}\left(x^{2\lambda}\frac{d}{dx}\right)=-\left( \frac{d^2}{dx^2}+ \frac{2\lambda}{x}\frac{d}{dx}\right),$$
   and if $\lambda_j>-1/2,$ $j=1,\ldots,n,$ we write
   $$ \Delta_{\lambda_1,\ldots,\lambda_n}= \sum_{j=1}^n \Delta_{\lambda_j,x_j}.$$
   Since
   $$\Delta_{\lambda,x}\left[(xy)^{-\lambda+1/2}J_{\lambda-1/2}(xy)\right]= y^2 (xy)^{-\lambda+1/2}J_{\lambda-1/2}(xy), \quad x,y \in (0,\infty)$$
   (see \cite[(5.3.7), p. 103]{L}), it follows that, for each $x=(x_1, \dots, x_n),$ $y=(y_1, \dots, y_n)\in (0,\infty)^n$,
   \begin{equation}\label{1.1}
      \Delta_{\lambda_1,\ldots,\lambda_n}\left[\prod_{j=1}^n (x_jy_j)^{-\lambda_j+\frac{1}{2}}J_{\lambda_j-\frac{1}{2}}(x_jy_j)\right]=
      \left( \sum_{j=1}^n y_j^2\right)\prod_{j=1}^n (x_jy_j)^{-\lambda_j+\frac{1}{2}}J_{\lambda_j-\frac{1}{2}}(x_jy_j).
   \end{equation}
   According to \eqref{1.1} we say that $m$ is of Laplace transform type when
   \begin{equation}\label{1.2}
      m(y)=\sum_{j=1}^n y_j^2 \int_0^\infty e^{-t \overset{n}{\underset{j=1}{\sum}} y_j^2} \phi(t)dt, \quad y\in (0,\infty)^n,
   \end{equation}
   for a certain $\phi \in L^\infty(0,\infty)$ (\cite[p. 121]{St1}).\\

   Our main result is the following.
   \begin{Th}\label{multiplier}
      Let $\lambda_j>-1/2,$ $j=1,\ldots,n.$ Assume that $m$ is of Laplace transform type. Then, the Hankel multiplier
      $T_{\lambda_1,\ldots,\lambda_n}^m$ is bounded from $L^p((0,\infty)^n, \overset{n}{\underset{j=1}{\prod}} x_j^{2\lambda_j }dx)$ into itself,
      for every $1<p<\infty,$ and from $L^1((0,\infty)^n, \overset{n}{\underset{j=1}{\prod}} x_j^{2\lambda_j }dx)$ into
      $L^{1,\infty}((0,\infty)^n, \overset{n}{\underset{j=1}{\prod}} x_j^{2\lambda_j }dx).$
   \end{Th}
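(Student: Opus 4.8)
The plan is to follow the classical scheme of Stein for multipliers of Laplace transform type, transplanted to the Bessel setting and combined with Calder\'on--Zygmund theory on the space of homogeneous type $\big((0,\infty)^n,\,d\mu,\,|\cdot|\big)$, where $d\mu(x)=\prod_{j=1}^n x_j^{2\lambda_j}\,dx$. First I would exploit the eigenvalue identity \eqref{1.1}: it shows that the heat semigroup $W_t=e^{-t\Delta_{\lambda_1,\ldots,\lambda_n}}$ acts on the Hankel transform side as multiplication by $e^{-t\sum_j y_j^2}$. Since $\sum_j y_j^2\,e^{-t\sum_j y_j^2}=-\partial_t e^{-t\sum_j y_j^2}$, the definition \eqref{1.2} of a Laplace transform type symbol gives, for $f\in L^2(d\mu)$, the representation
$$
T_{\lambda_1,\ldots,\lambda_n}^m f=-\int_0^\infty \phi(t)\,\partial_t W_t f\,dt .
$$
The $L^2(d\mu)$-boundedness is already guaranteed by $m\in L^\infty$, so the whole task is to prove weak type $(1,1)$; the range $1<p<\infty$ then follows by Marcinkiewicz interpolation between $L^1$ and $L^2$ for $1<p\le 2$, and by duality (the adjoint being an operator of the same kind, associated with $\overline{\phi}$) for $2\le p<\infty$.

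Next I would pass to the integral kernel. Because $\Delta_{\lambda_1,\ldots,\lambda_n}$ is a sum of commuting one-dimensional Bessel operators, the semigroup factorises, $W_t(x,y)=\prod_{j=1}^n W_t^{\lambda_j}(x_j,y_j)$, and each factor is the explicit one-dimensional Bessel heat kernel
$$
W_t^{\lambda}(u,v)=\frac{(uv)^{-\lambda+1/2}}{2t}\,\exp\!\Big(-\frac{u^2+v^2}{4t}\Big)\,I_{\lambda-1/2}\!\Big(\frac{uv}{2t}\Big),
$$
with $I_\nu$ the modified Bessel function. Consequently $T_{\lambda_1,\ldots,\lambda_n}^m$ is, off the diagonal, an integral operator against
$$
K(x,y)=-\int_0^\infty \phi(t)\,\partial_t W_t(x,y)\,dt ,
$$
the $t$-integral converging absolutely for $x\ne y$ thanks to the Gaussian factors. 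It then remains to verify the standard Calder\'on--Zygmund conditions for $K$ with respect to $(d\mu,|\cdot|)$: the size bound $|K(x,y)|\lesssim \mu(B(x,|x-y|))^{-1}$ and a H\"ormander-type smoothness estimate, e.g.\ $|\nabla_y K(x,y)|\lesssim |x-y|^{-1}\mu(B(x,|x-y|))^{-1}$. Granted these, the Calder\'on--Zygmund theorem on spaces of homogeneous type yields the desired weak $(1,1)$ inequality, and the interpolation and duality observed above complete the proof.

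The main obstacle is the derivation of these kernel estimates. Using $\partial_t W_t=-\Delta_{\lambda,x}W_t$ or differentiating the explicit kernel directly, one reduces everything to bounds for $\partial_t W_t^{\lambda}$ and its spatial derivatives, which in turn rest on the asymptotics $I_\nu(z)\sim (z/2)^\nu/\Gamma(\nu+1)$ as $z\to 0^+$ and $I_\nu(z)\sim e^{z}/\sqrt{2\pi z}$ as $z\to\infty$. The delicate point is to integrate these in $t$ uniformly in $x,y$: I would split the integral according to whether $t\lesssim |x-y|^2$ or $t\gtrsim |x-y|^2$ (equivalently, whether each Bessel argument $x_jy_j/2t$ is large or small), control the resulting pieces by elementary estimates together with the boundedness of $\phi$, and then match the outcome to the measure $\mu(B(x,|x-y|))$ of the Euclidean ball. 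Carrying this through the $n$-fold product, and uniformly up to the boundary of $(0,\infty)^n$ where the weights $x_j^{2\lambda_j}$ degenerate or blow up, is the technically demanding part; the smoothness estimate requires the same analysis applied to one further derivative. I expect this kernel analysis, rather than the functional-analytic wrapper, to absorb essentially all of the work.
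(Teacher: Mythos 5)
Your architecture (off-diagonal kernel representation, Calder\'on--Zygmund size and smoothness estimates relative to the doubling measure $d\mu=\prod_{j=1}^n x_j^{2\lambda_j}\,dx$, weak $(1,1)$ from the CZ theorem on spaces of homogeneous type, then interpolation and duality) is viable, but it is a genuinely different route from the paper's: it is essentially the method of the one-dimensional antecedent \cite{BMR}, which the authors explicitly decline to follow in higher dimensions. The paper never verifies any H\"ormander-type smoothness for $K^\phi_{\lambda_1,\dots,\lambda_n}$; instead it (i) establishes a principal-value representation on $C_c^\infty((0,\infty)^n)$, including a bounded diagonal term $\alpha(\varepsilon)f(x)$ (Proposition~\ref{integral representation}); (ii) bounds the maximal truncated operator (Proposition~\ref{maximal}) by splitting into the local region, where only the \emph{difference} $K^\phi_{\lambda_1,\dots,\lambda_n}(x,y)-\prod_j(x_jy_j)^{-\lambda_j}H^\phi(x,y)$ must be controlled and the known Euclidean results (boundedness of $T^{m,*}$ on $\mathbb{R}^n$ and of $M_{n-1}$) are transplanted through dyadic cubes, and the global region, where $|K^\phi_{\lambda_1,\dots,\lambda_n}|$ itself already defines a bounded positive operator (Hardy-type operators, $\mathcal{S}_k$, the operators $Y^s_{\beta_1,\dots,\beta_r}$ treated as in \cite{NS1}, assembled via \cite{Din}); and (iii) deduces a.e.\ existence of the principal value on all of $L^p$ (Proposition~\ref{limit}). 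Your route buys a much shorter wrapper: no maximal operator, no principal value, weak $(1,1)$ straight from the CZ theorem, and one could even bypass kernel smoothness entirely by invoking Duong--McIntosh-type results for semigroups with Gaussian heat-kernel bounds. The paper's route buys exactly what you concede is the hard part --- it never estimates an additional spatial derivative of the $n$-fold Bessel kernel uniformly over all mixed local/global regions --- and it proves more than the stated theorem, namely boundedness of $T^{m,*}_{\lambda_1,\dots,\lambda_n}$ and the a.e.\ principal-value representation with the constant $C\phi(0^+)$. One concrete error in your sketch to flag: the splitting $t\lesssim|x-y|^2$ versus $t\gtrsim|x-y|^2$ is \emph{not} equivalent to the Bessel arguments $x_jy_j/2t$ being large or small; the regimes are governed coordinatewise by $t$ versus $x_jy_j$, and a coordinate can sit on the diagonal near the origin ($x_jy_j\ll|x-y|^2$) while another is far away. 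If in such a mixed region you apply the large-argument asymptotics for all $t\lesssim|x-y|^2$, you produce bounds of the shape $x_1^{-2\lambda_1}|x-y|^{-2\lambda_2-2}$ which, for $\lambda_1>0$ and $|x-y|\gg x_1$, exceed $\mu(B(x,|x-y|))^{-1}$; the size condition is rescued only by keeping the constraint $t\lesssim x_jy_j$ in the large-argument regime together with the Gaussian factors. So the plan works, but the regime decomposition must be done per coordinate --- precisely the bookkeeping that the paper's local/global splitting is designed to organize.
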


   This theorem will be proved in Section 2. It can be seen as an extension to higher dimension of \cite[Theorem 1.2]{BMR}. However, in order to prove Theorem
   \ref{multiplier} we use a different procedure  than the one employed in the proof of \cite[Theorem 1.2]{BMR}. In \cite{BMR}
   Calderón-Zygmund theory for singular integral operators is applied. Here, we represent the multiplier operator $T_{\lambda_1,\ldots,\lambda_n}^m$ as a
   principal value integral operator when it acts on the space $C_c^\infty((0,\infty)^n)$ of the $C^\infty$ functions with compact support in $(0,\infty)^n$ (see Proposition 2.1). Then, after proving $L^p$-boundedness properties for the maximal operator associated with the principal
   value integral operator (see Proposition 2.2), we extend the Hankel multiplier $T_{\lambda_1,\ldots,\lambda_n}^m$ to
   $L^p((0,\infty)^n, \overset{n}{\underset{j=1}{\prod}} x_j^{2\lambda_j }dx),$ $1\leq p<\infty,$ as a principal value integral operator that is bounded from
   $L^p((0,\infty)^n, \overset{n}{\underset{j=1}{\prod}} x_j^{2\lambda_j }dx)$ into itself, when $1<p<\infty,$ and from
   $L^1((0,\infty)^n, \overset{n}{\underset{j=1}{\prod}} x_j^{2\lambda_j }dx)$ into $L^{1,\infty}((0,\infty)^n, \overset{n}{\underset{j=1}{\prod}} x_j^{2\lambda_j }dx)$ (see Proposition 2.3). To establish these properties we split the region $(0,\infty)^n\times (0,\infty)^n$ in two parts. The set
   $$ \Omega= \{ (x,y)\in (0,\infty)^n\times (0,\infty)^n: x_j/2 < y_j < 2x_j, \ j=1,\ldots,n\}$$
   is called the local region. In $\Omega$ the kernel $K_{\lambda_1,\ldots,\lambda_n}^\phi$ that defines the Hankel multiplier $T_{\lambda_1,\ldots,\lambda_n}^m$ differs
   from $\overset{n}{\underset{j=1}{\prod}}(x_jy_j)^{-\lambda_j}K^m$, where $K^m$ is the kernel associated with the Fourier multipliers $T^m$ defined by
   $$T^mf= (m\hat{f})\check{\;}, \quad f\in L^2(\mathbb{R},dx),$$
   by a kernel defining a bounded operator in
   $L^p((0,\infty)^n, \overset{n}{\underset{j=1}{\prod}} x_j^{2\lambda_j}dx),$ for every $1\leq p<\infty$.
   Here as usual, by $\hat{f}$ we denote the Fourier transform of $f$ and by $\check{g}$ the inverse Fourier transform of $g$.\\

   On $(0,\infty)^n\times (0,\infty)^n)\backslash \Omega,$ called global region, the kernel $|K_{\lambda_1,\ldots,\lambda_n}^\phi|$ defines a positive bounded operator
   from $L^p((0,\infty)^n, \overset{n}{\underset{j=1}{\prod}} x_j^{2\lambda_j}dx)$ into itself, when $1<p<\infty,$ and from
   $L^1((0,\infty)^n, \overset{n}{\underset{j=1}{\prod}} x_j^{2\lambda_j}dx)$ into $L^{1,\infty}((0,\infty)^n, \overset{n}{\underset{j=1}{\prod}} x_j^{2\lambda_j}dx)$.\\

   The semigroup $\{W_t^{\lambda_1, \dots, \lambda_n}\}_{t \geq 0}$ generated by $-\Delta_{\lambda_1, \dots, \lambda_n}$ is defined as follows
   $$W_t^{\lambda_1, \dots, \lambda_n}(f)(x) = \int_{(0,\infty)^n} \prod_{j=1}^n W_t^{\lambda_j}(x_j,y_j)f(y) \prod_{j=1}^n y_j^{2\lambda_j} dy,
     \quad x=(x_1, \dots, x_n) \in (0,\infty)^n,$$
   where
    \begin{equation}\label{F11}
    W_t^\lambda(x,y)=\frac{(xy)^{-\lambda+1/2}}{2t}I_{\lambda-1/2}\left( \frac{x y}{2t}\right) e^{-(x^2+y^2)/4t}, \quad t,x,y  \in (0,\infty), \  \lambda>-1/2,
    \end{equation}
   and $I_\nu$ denotes the modified Bessel function of the first kind and order $\nu$ (\cite[p. 395]{W}). This semigroup
   $\{W_t^{\lambda_1, \dots, \lambda_n}\}_{t \geq 0}$
   is a symmetric diffusion semigroup in the sense of Stein (\cite[p. 65]{St1}). Moreover, the Hankel multiplier $T_{\lambda_1, \dots, \lambda_n}^m$ is
   actually an spectral multiplier associated with $\Delta_{\lambda_1, \dots, \lambda_n}$. Then, by \cite[Corollary 3, p. 121]{St1}  $T_{\lambda_1, \dots, \lambda_n}^m$ is bounded from $L^p((0,\infty)^n, \overset{n}{\underset{j=1}{\prod}} x_j^{2\lambda_j}dx)$ into itself, for every $1<p<\infty$.
   In Theorem~\ref{multiplier} we prove as a new result that $T_{\lambda_1, \dots, \lambda_n}^m$ defines a bounded operator from
   $L^1((0,\infty)^n, \overset{n}{\underset{j=1}{\prod}} x_j^{2\lambda_j}dx)$ into $L^{1,\infty}((0,\infty)^n, \overset{n}{\underset{j=1}{\prod}} x_j^{2\lambda_j}dx)$.
   Moreover, in Proposition \ref{limit} we establish a representation of the operator $T_{\lambda_1, \dots, \lambda_n}^m$ as a principal value integral operator
   in $L^p((0,\infty)^n, \overset{n}{\underset{j=1}{\prod}} x_j^{2\lambda_j}dx)$, $1 \leq p < \infty$ .\\

   As an application of Theorem~\ref{multiplier} we can show $L^p$-boundedness properties for the imaginary powers of the Bessel operator $\Delta_{\lambda_1, \dots, \lambda_n}$.
   We define, for every $\beta \in \mathbb{R}$, the function
   $$\phi_\beta(t)= \frac{t^{-i \beta}}{\Gamma(1-i\beta)}, \quad t \in (0, \infty).$$
   We have that
   $$m_\beta(y) = \sum_{j=1}^n y_j^2 \int_0^\infty e^{-t \overset{n}{\underset{j=1}{\sum}} y_j^2} \phi_\beta(t) dt
                = \left( \sum_{j=1}^n y_j^2 \right)^{i\beta}, \quad y=(y_1, \dots, y_n) \in (0,\infty)^n \text{ and } \beta \in \mathbb{R}. $$
   For every $\beta \in \mathbb{R}$, the $i\beta$-power $\Delta_{\lambda_1, \dots, \lambda_n}^{i\beta}$ of $\Delta_{\lambda_1, \dots, \lambda_n}$ is defined by
   $$\Delta_{\lambda_1, \dots, \lambda_n}^{i\beta}=T_{\lambda_1, \dots, \lambda_n}^{m_\beta}.$$
   From Theorem~\ref{multiplier} we deduce the following.

   \begin{Cor}
     Let $\beta \in \mathbb{R}$. Then, the operator $\Delta_{\lambda_1, \dots, \lambda_n}^{i\beta}$ is bounded from
     $L^p((0,\infty)^n, \overset{n}{\underset{j=1}{\prod}} x_j^{2\lambda_j}dx)$ into itself, for every $1<p<\infty$, and from
     $L^1((0,\infty)^n, \overset{n}{\underset{j=1}{\prod}} x_j^{2\lambda_j}dx)$ into $L^{1,\infty}((0,\infty)^n, \overset{n}{\underset{j=1}{\prod}} x_j^{2\lambda_j}dx)$.
   \end{Cor}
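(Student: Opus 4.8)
The plan is to view the statement as a direct specialization of Theorem~\ref{multiplier}. By definition $\Delta_{\lambda_1,\ldots,\lambda_n}^{i\beta}=T_{\lambda_1,\ldots,\lambda_n}^{m_\beta}$, so it suffices to check that the symbol $m_\beta$ is of Laplace transform type in the sense of \eqref{1.2}, that is, that it can be written with an $L^\infty(0,\infty)$ density $\phi$. The excerpt already exhibits exactly such a representation, with $\phi=\phi_\beta$, so the entire argument reduces to verifying the two things that make \eqref{1.2} legitimate: that $\phi_\beta\in L^\infty(0,\infty)$ and that the Laplace integral in \eqref{1.2} indeed reproduces $m_\beta(y)=\left(\sum_{j=1}^n y_j^2\right)^{i\beta}$.

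First I would verify $\phi_\beta\in L^\infty(0,\infty)$. Since $t>0$ and $\beta\in\mathbb{R}$, one has $t^{-i\beta}=e^{-i\beta\log t}$, whence $|t^{-i\beta}|=1$ for every $t\in(0,\infty)$. The Gamma function has no zeros, so $\Gamma(1-i\beta)\neq 0$, and therefore $|\phi_\beta(t)|=|\Gamma(1-i\beta)|^{-1}$ is a finite constant independent of $t$. Hence $\phi_\beta$ is bounded, with $\|\phi_\beta\|_{L^\infty(0,\infty)}=|\Gamma(1-i\beta)|^{-1}<\infty$.

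Next I would confirm the closed form for $m_\beta$. Writing $s=\sum_{j=1}^n y_j^2>0$ and substituting $u=ts$ in the Laplace integral gives $\int_0^\infty e^{-ts}t^{-i\beta}\,dt=s^{i\beta-1}\int_0^\infty e^{-u}u^{-i\beta}\,du=s^{i\beta-1}\Gamma(1-i\beta)$, using the definition of $\Gamma$ at the point $1-i\beta$; multiplying by $s/\Gamma(1-i\beta)$ recovers $m_\beta(y)=s^{i\beta}$, as asserted. With these verifications in hand, $m_\beta$ satisfies the hypotheses of Theorem~\ref{multiplier} with the particular choice $\phi=\phi_\beta$, and the desired $L^p$-boundedness for $1<p<\infty$ together with the weak type $(1,1)$ estimate follow at once by applying that theorem. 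There is essentially no obstacle here: the only points requiring (entirely routine) care are the boundedness of $\phi_\beta$ and the non-vanishing of $\Gamma(1-i\beta)$, after which the corollary is immediate.
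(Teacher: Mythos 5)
Your proof is correct and takes essentially the same approach as the paper: the corollary is deduced directly from Theorem~\ref{multiplier} by observing that $m_\beta$ is of Laplace transform type with $\phi=\phi_\beta$, where $\lvert t^{-i\beta}\rvert=1$ gives $\phi_\beta\in L^\infty(0,\infty)$ and the change of variables $u=t\sum_{j=1}^n y_j^2$ yields $m_\beta(y)=\bigl(\sum_{j=1}^n y_j^2\bigr)^{i\beta}$. The checks you spell out (non-vanishing of $\Gamma(1-i\beta)$ and the Gamma-integral identity) are precisely the routine verifications the paper leaves implicit.
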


   The next properties of the modified Bessel function $I_\nu$, $\nu>-1$, that can be found in \cite[Chapter 5]{L},
   are very useful in the sequel:
   \begin{equation}\label{I1}
      z^{-\nu}I_\nu (z) \sim \frac{1}{2^\nu \Gamma(\nu +1)}, \quad \text{as } z \rightarrow 0^+;
   \end{equation}
   \begin{equation}\label{I2}
      I_\nu (z) = \frac{e^z}{\sqrt{2 \pi z}}\left( \sum_{k=0}^n (-1)^k [\nu,k] (2z)^{-k} + \mathcal{O}\left(\frac{1}{z^{n+1}}\right) \right), \quad n \in \mathbb{N}, \ z \in (0,\infty),
    \end{equation}
    where $[\nu,0]=1$ and
    $$[\nu,k]=\frac{(4\nu^2-1)(4\nu^2-3^2) \ldots (4\nu^2-(2k-1)^2)}{2^{2k}\Gamma(k+1)}, \quad k=1,2, \dots \ ;$$
    and
    \begin{equation}\label{I3}
      \frac{d}{dz}\left( z^{-\nu} I_\nu(z) \right) = z^{-\nu} I_{\nu+1}(z), \quad z \in (0,\infty).
    \end{equation}

    Also, we will use the following properties of the Bessel function $J_\nu$, $\nu>-1$, (see \cite[pages 110 and 123]{L}):
    \begin{equation}\label{J1}
      z^{-\nu}J_\nu (z) \sim \frac{1}{2^\nu \Gamma(\nu +1)}, \quad \text{as } z \rightarrow 0^+;
   \end{equation}
   \begin{equation}\label{J2}
     J_\nu (z) = \sqrt{\frac{2}{\pi z}}\left( \cos\left( z- \frac{\nu \pi}{2} - \frac{\pi}{4}\right) + \mathcal{O}\left(\frac{1}{z}\right) \right), \quad z \in (0,\infty);
   \end{equation}
   and
   \begin{equation}\label{J3}
      \frac{d}{dz}\left( z^{-\nu} J_\nu(z) \right) = - z^{-\nu} J_{\nu+1}(z), \quad z \in (0,\infty).
   \end{equation}

   The heat kernel associated with the Bessel operator $-\Delta_{\lambda,x}$ can be written in terms of Bessel functions by (\cite[p. 395]{W})
   \begin{equation}\label{heatkernel}W_t^\lambda(x,y)
   = \int_0^\infty e^{-tz^2} (zx)^{-\lambda +1/2} J_{\lambda-1/2}(zx) (zy)^{-\lambda +1/2} J_{\lambda-1/2}(zy) z^{2\lambda} dz, \quad t,x,y \in (0,\infty).
   \end{equation}
   The $n$-dimensional Hankel transform on function and distribution spaces has been studied by Molina and Trione (\cite{Mo}). A
   formula connecting Hankel transform and Bessel operators
    that we will use is the following
   \begin{equation}\label{H1}
     h_{\lambda_1, \dots, \lambda_n}(\Delta_{\lambda_1, \dots, \lambda_n}f)(x)= \lvert x \rvert^2 h_{\lambda_1, \dots, \lambda_n}(f)(x),
     \quad x \in (0,\infty)^n \text{ and } f \in C_c^\infty((0,\infty)^n).
   \end{equation}
   Also the Plancherel type equality
   \begin{equation}\label{H2}
     \int_{(0,\infty)^n} h_{\lambda_1, \dots, \lambda_n}(f)(x) h_{\lambda_1, \dots, \lambda_n}(g)(x) \prod_{j=1}^n x_j^{2\lambda_j} dx
     = \int_{(0,\infty)^n} f(x)g(x) \prod_{j=1}^n x_j^{2\lambda_j} dx,
   \end{equation}
   holds for every $f$, $g \in L^2((0,\infty)^n,\overset{n}{\underset{j=1}{\prod}} x_j^{2\lambda_j}dx)$ (see \cite[Lemma 2.7]{BS}).\\

   Throughout this paper we will always denote by $C$ a suitable positive constant that can change from
   one line to the other one. Also, we will use repeatedly without
   saying it that, for every $k\in \mathbb{N}$,
   ${\underset{z>0}{\sup}}\ z^ke^{-z}<\infty$.

   \section{Proof of Theorem~\ref{multiplier}}

   In order to prove Theorem~\ref{multiplier} we need to establish a pointwise integral representation for the multiplier operator $T_{\lambda_1, \dots, \lambda_n}^m$
   as a principal value integral operator (see Proposition~\ref{integral representation} below). In the sequel we assume that
   $$m(x)=\sum_{j=1}^n x_j^2 \int_0^\infty  e^{- t \overset{n}{\underset{j=1}{\sum}} x_j^2 }\phi(t) dt, \quad x \in (0,\infty)^n,$$
   where $\phi \in L^\infty((0,\infty)^n)$.\\

   Firstly we prove the following result.

   \begin{Lem}\label{lemma2.1}
     Let $\lambda_j > -1/2$, $j=1, \dots, n$. Assume that $f \in C_c^\infty((0,\infty)^n)$. Then,
     $$T_{\lambda_1, \dots, \lambda_n}^m(f)(x)=\int_0^\infty \phi(t) h_{\lambda_1, \dots, \lambda_n}
     \left( \sum_{j=1}^n y_j^2 e^{- t \overset{n}{\underset{j=1}{\sum}} y_j^2 }  h_{\lambda_1, \dots, \lambda_n}(f)(y)\right)(x) dt, \ \text{a.e. } x \in (0,\infty)^n.$$
   \end{Lem}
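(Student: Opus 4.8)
The plan is to start from the defining identity $T_{\lambda_1,\ldots,\lambda_n}^m(f)=h_{\lambda_1,\ldots,\lambda_n}(m\,h_{\lambda_1,\ldots,\lambda_n}(f))$, insert the Laplace-type representation \eqref{1.2} of $m$, and then interchange the resulting $t$-integration with the outer Hankel transform by Fubini's theorem. Before doing so I would record the decay of $g:=h_{\lambda_1,\ldots,\lambda_n}(f)$. Since $f\in C_c^\infty((0,\infty)^n)$ and $\Delta_{\lambda_1,\ldots,\lambda_n}$ maps $C_c^\infty((0,\infty)^n)$ into itself (the coefficients $2\lambda_j/x_j$ are smooth on the support of $f$, which is bounded away from the coordinate hyperplanes), iterating \eqref{H1} gives $\bigl(\sum_{j=1}^n x_j^2\bigr)^k g(x)=h_{\lambda_1,\ldots,\lambda_n}(\Delta_{\lambda_1,\ldots,\lambda_n}^k f)(x)$ for every $k\in\mathbb{N}$. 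As $h_{\lambda_1,\ldots,\lambda_n}$ sends $L^1(\prod_j x_j^{2\lambda_j}dx)$ into $L^\infty$ (Introduction), each right-hand side is bounded, so $g$ is rapidly decreasing: $|g(y)|\le C_k(1+\sum_j y_j^2)^{-k}$ for all $k$. In particular $g\in L^1\cap L^2$, and since $|m|\le\|\phi\|_\infty$ we also get $mg\in L^1\cap L^2$, so $h_{\lambda_1,\ldots,\lambda_n}(mg)$ is given a.e. by the absolutely convergent pointwise Hankel integral.

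Substituting \eqref{1.2} I would then write, for a.e.\ $x\in(0,\infty)^n$,
\begin{align*}
   T_{\lambda_1,\ldots,\lambda_n}^m(f)(x) &= \int_{(0,\infty)^n}\prod_{j=1}^n(x_jy_j)^{-\lambda_j+1/2}J_{\lambda_j-1/2}(x_jy_j)\\
   &\quad\times\left(\int_0^\infty\phi(t)\Big(\sum_{j=1}^n y_j^2\Big)e^{-t\sum_{j=1}^n y_j^2}\,dt\right)g(y)\prod_{j=1}^n y_j^{2\lambda_j}\,dy,
\end{align*}
and the goal is to pull the $dt$-integral outside the $dy$-integral. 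Recognizing the inner $y$-integral, for each fixed $t$, as $h_{\lambda_1,\ldots,\lambda_n}\bigl((\sum_j y_j^2)e^{-t\sum_j y_j^2}g\bigr)(x)$ then yields exactly the claimed identity.

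The crux, and the main technical point, is to verify the hypotheses of Fubini's theorem, i.e.\ absolute integrability of the integrand over $(0,\infty)^n\times(0,\infty)$. Here I would integrate in $t$ first, using
\begin{equation*}
   \int_0^\infty\Big(\sum_{j=1}^n y_j^2\Big)e^{-t\sum_{j=1}^n y_j^2}\,dt = 1, \qquad y\in(0,\infty)^n,
\end{equation*}
so the $t$-integral of the absolute value of the integrand is dominated by $\|\phi\|_\infty\prod_j\bigl|(x_jy_j)^{-\lambda_j+1/2}J_{\lambda_j-1/2}(x_jy_j)\bigr|\,|g(y)|\prod_j y_j^{2\lambda_j}$. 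By \eqref{J1} and \eqref{J2}, each factor $(x_jy_j)^{-(\lambda_j-1/2)}J_{\lambda_j-1/2}(x_jy_j)$ stays bounded near $y_j=0$ and grows at most polynomially as $y_j\to\infty$ (for fixed $x$); combined with the rapid decay of $g$, the integrand decays faster than any power of $\sum_j y_j^2$, while near the coordinate axes the singularity of $\prod_j y_j^{2\lambda_j}$ is integrable since $2\lambda_j>-1$. Hence the double integral converges absolutely, Fubini applies, and the interchange is justified, completing the proof. I expect this absolute-integrability estimate, and specifically the step of showing $g$ decays fast enough to beat the polynomial growth of the Bessel kernel when some $\lambda_j<0$, to be the only genuine obstacle; the remaining manipulations are routine bookkeeping.
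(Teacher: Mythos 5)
Your proof is correct, but it takes a genuinely different route from the paper. You argue pointwise: you represent $T_{\lambda_1,\dots,\lambda_n}^m(f)(x)$ directly as the absolutely convergent $L^1$-Hankel integral of $m\,h_{\lambda_1,\dots,\lambda_n}(f)$ (legitimate because $m\,h_{\lambda_1,\dots,\lambda_n}(f)\in L^1\cap L^2$ and the $L^2$ extension of the transform agrees a.e.\ with the pointwise integral there), and then perform a single Fubini interchange of $dt$ and $dy$, with the efficient trick of integrating in $t$ first so that $\int_0^\infty \bigl(\sum_j y_j^2\bigr)e^{-t\sum_j y_j^2}\,dt=1$ trivializes the bound. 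The paper instead works by duality: it pairs $T_{\lambda_1,\dots,\lambda_n}^m(f)$ against an arbitrary $g\in C_c^\infty((0,\infty)^n)$, applies the Plancherel identity \eqref{H2} twice (once to pass to $\int m\,h(f)\,h(g)$, once, for each fixed $t$, to come back to the $x$-side), and needs \emph{two} Fubini interchanges — the second one, $t\leftrightarrow x$, being the costly step that forces the region-by-region estimates $\int_0^{1/x_1}\cdots\int_{1/x_n}^\infty$ with \eqref{J1}, \eqref{J2}; it then concludes by identifying two $L^1_{loc}$ functions that integrate identically against all test functions. Both arguments rest on the same two pillars — the rapid decay of $h_{\lambda_1,\dots,\lambda_n}(f)$, which you justify cleanly by iterating \eqref{H1} (the paper merely asserts that $|y|^l h_{\lambda_1,\dots,\lambda_n}(f)$ is bounded), and the compatibility of the $L^2$-isometry with the $L^1$-integral on $L^1\cap L^2$, which the paper also uses implicitly when it estimates $h\bigl(\sum_j y_j^2 e^{-t\sum_j y_j^2}h(f)\bigr)(x)$ by pointwise integrals. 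What each approach buys: yours is shorter, needs only one interchange, and makes the decay hypotheses explicit; the paper's duality formulation never leaves the $L^2$/Plancherel framework in which $T_{\lambda_1,\dots,\lambda_n}^m$ is actually defined, so the meaning of each expression is immediate, at the price of lengthier convergence estimates.
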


   \begin{proof}

     Let $g \in C_c^\infty((0,\infty)^n)$. By using Plancherel equality \eqref{H2} we get
     \begin{align*}
       \int_{(0,\infty)^n} & T_{\lambda_1, \dots, \lambda_n}^m (f)(x)g(x) \prod_{j=1}^n x_j^{2\lambda_j} dx\\
         = &  \int_{(0,\infty)^n} \sum_{j=1}^n y_j^2 \int_0^\infty e^{-t \overset{n}{\underset{j=1}{\sum}} y_j^2} \phi(t)dt \
           h_{\lambda_1, \dots, \lambda_n}(f)(y) h_{\lambda_1, \dots, \lambda_n}(g)(y) \prod_{j=1}^n y_j^{2\lambda_j} dy\\
         = &  \int_0^\infty \phi(t) \int_{(0,\infty)^n} \sum_{j=1}^n y_j^2 e^{-t \overset{n}{\underset{j=1}{\sum}} y_j^2}  \
           h_{\lambda_1, \dots, \lambda_n}(f)(y) h_{\lambda_1, \dots, \lambda_n}(g)(y) \prod_{j=1}^n y_j^{2\lambda_j} dy dt .\\
     \end{align*}
     The interchange of the order of integration is justify because we can write
     \begin{align*}
        \int_{(0,\infty)^n} & \sum_{j=1}^n y_j^2 \int_0^\infty \lvert \phi(t) \rvert   e^{-t \overset{n}{\underset{j=1}{\sum}} y_j^2}dt \
           \lvert h_{\lambda_1, \dots, \lambda_n}(f)(y) \rvert \ \lvert h_{\lambda_1, \dots, \lambda_n}(g)(y) \rvert \prod_{j=1}^n y_j^{2\lambda_j} dy\\
         \leq & C \int_{(0,\infty)^n} \lvert h_{\lambda_1, \dots, \lambda_n}(f)(y) \rvert \ \lvert h_{\lambda_1, \dots, \lambda_n}(g)(y) \rvert \prod_{j=1}^n y_j^{2\lambda_j} dy\\
         \leq & C \lVert h_{\lambda_1, \dots, \lambda_n}(f) \rVert_{L^2((0,\infty)^n,\overset{n}{\underset{j=1}{\prod}} x_j^{2\lambda_j }dx)}
          \ \lVert h_{\lambda_1, \dots, \lambda_n}(g)\rVert_{L^2((0,\infty)^n, \overset{n}{\underset{j=1}{\prod}} x_j^{2\lambda_j }dx)}\\
         = & C \lVert f \rVert_{L^2((0,\infty)^n, \overset{n}{\underset{j=1}{\prod}} x_j^{2\lambda_j }dx)} \
           \lVert g \rVert_{L^2((0,\infty)^n, \overset{n}{\underset{j=1}{\prod}} x_j^{2\lambda_j }dx)}<\infty,
     \end{align*}
     where we have used that $h_{\lambda_1, \dots, \lambda_n}$ is an isometry in $L^2((0,\infty)^n, \overset{n}{\underset{j=1}{\prod}} x_j^{2\lambda_j }dx)$ .\\

     Then, by using again \eqref{H2}, we obtain
     \begin{align*}
       \int_{(0,\infty)^n} & T_{\lambda_1, \dots, \lambda_n}^m (f)(x)g(x) \prod_{j=1}^n x_j^{2\lambda_j} dx\\
       = &  \int_0^\infty \phi(t) \int_{(0,\infty)^n} h_{\lambda_1, \dots, \lambda_n}\left(\sum_{j=1}^n y_j^2 e^{-t \overset{n}{\underset{j=1}{\sum}} y_j^2}  \
           h_{\lambda_1, \dots, \lambda_n}(f)(y)\right)(x) g(x) \prod_{j=1}^n x_j^{2\lambda_j} dx dt .
     \end{align*}
     According to \eqref{J1} and \eqref{J2}, for each $0 \leq r \leq n$,
     \begin{align*}
       \int_0^{1/x_1} & \dots \int_0^{1/x_r} \int_{1/x_{r+1}}^\infty \dots \int_{1/x_n}^\infty
       \lvert \prod_{j=1}^n (x_jy_j)^{-\lambda_j+1/2} J_{\lambda_j-1/2}(x_jy_j) \rvert \\
       & \times \sum_{j=1}^n y_j^2 e^{-t \overset{n}{\underset{j=1}{\sum}} y_j^2}\ \lvert h_{\lambda_1, \dots, \lambda_n}(f)(y) \rvert \prod_{j=1}^n y_j^{2\lambda_j} dy\\
         \leq & C \prod_{j=r+1}^n x_j^{-\lambda_j} \int_{(0,\infty)^n} \sum_{j=1}^n y_j^2 e^{-t \overset{n}{\underset{j=1}{\sum}} y_j^2}\
           \lvert h_{\lambda_1, \dots, \lambda_n}(f)(y) \rvert \prod_{j=1}^r y_j^{2\lambda_j} \prod_{j=r+1}^n y_j^{\lambda_j}dy, \quad x \in (0,\infty)^n.
     \end{align*}
     Then, since $|y|^lh_{\lambda_1, \dots, \lambda_n}(f)$ is bounded on $(0,\infty)^n$, for every $l\in \mathbb{N}$, and $g \in C_c^\infty((0,\infty)^n)$, we get, for every $0 \leq r \leq n,$
     \begin{align*}
       \int_0^\infty & \lvert \phi(t) \rvert \int_{(0,\infty)^n} \int_0^{1/x_1}  \dots \int_0^{1/x_r} \int_{1/x_{r+1}}^\infty \dots \int_{1/x_n}^\infty
       \lvert \prod_{j=1}^n (x_jy_j)^{-\lambda_j+1/2} J_{\lambda_j-1/2}(x_jy_j) \rvert \\
       & \times \sum_{j=1}^n y_j^2 e^{-t \overset{n}{\underset{j=1}{\sum}} y_j^2}\ \lvert h_{\lambda_1, \dots, \lambda_n}(f)(y) \rvert \prod_{j=1}^n y_j^{2\lambda_j} dy
        \lvert g(x) \rvert \prod_{j=1}^n x_j^{2\lambda_j} dx dt\\
       \leq & C \int_0^\infty  \int_{(0,\infty)^n} \int_{(0,\infty)^n}
         \sum_{j=1}^n y_j^2 e^{-t \overset{n}{\underset{j=1}{\sum}} y_j^2}\ \lvert h_{\lambda_1, \dots, \lambda_n}(f)(y) \rvert
        \lvert g(x) \rvert \\
        & \times \prod_{j=1}^r y_j^{2\lambda_j}\prod_{j=r+1}^n y_j^{\lambda_j} \prod_{j=1}^r x_j^{2\lambda_j} \prod_{j=r+1}^n x_j^{\lambda_j} dy dx dt\\
        \leq & C \int_{(0,\infty)^n} \lvert g(x) \rvert \prod_{j=1}^r x_j^{2\lambda_j} \prod_{j=r+1}^n x_j^{\lambda_j} dx \\
        & \times  \int_{(0,\infty)^n} \left(\int_0^\infty    \sum_{j=1}^n y_j^2 e^{-t \overset{n}{\underset{j=1}{\sum}} y_j^2}dt\right) \ \lvert h_{\lambda_1, \dots, \lambda_n}(f)(y) \rvert
         \prod_{j=1}^r y_j^{2\lambda_j}\prod_{j=r+1}^n y_j^{\lambda_j}   dy  \\
         \leq & C \left(\int_{(0,\infty)^n} \lvert g(x) \rvert \prod_{j=1}^r x_j^{2\lambda_j} \prod_{j=r+1}^n x_j^{\lambda_j} dx\right)
           \left(\int_{(0,\infty)^n} \lvert h_{\lambda_1, \dots, \lambda_n}(f)(y) \rvert \prod_{j=1}^r y_j^{2\lambda_j}\prod_{j=r+1}^n y_j^{\lambda_j}dy\right) < \infty.
     \end{align*}

     Note that these estimates also show that the function
     $$
    G(x)= \int_0^\infty \phi(t) h_{\lambda_1, \dots, \lambda_n}
    \left( \sum_{j=1}^n y_j^2 e^{- t \overset{n}{\underset{j=1}{\sum}} y_j^2 }  h_{\lambda_1, \dots, \lambda_n}(f)(y)\right)(x) dt, \ x \in (0,\infty)^n,
    $$
    is in $L^1_{loc}((0,\infty)^n)$.

     We conclude that
     \begin{align*}
       \int_{(0,\infty)^n} & T_{\lambda_1, \dots, \lambda_n}^m (f)(x)g(x) \prod_{j=1}^n x_j^{2\lambda_j} dx\\
       = & \int_{(0,\infty)^n} \left\{\int_0^\infty \phi(t)  h_{\lambda_1, \dots, \lambda_n}\left(\sum_{j=1}^n y_j^2 e^{-t \overset{n}{\underset{j=1}{\sum}} y_j^2}  \
           h_{\lambda_1, \dots, \lambda_n}(f)(y)\right)(x) dt \right\} g(x) \prod_{j=1}^n x_j^{2\lambda_j} dx .
     \end{align*}
     Thus, the proof of this Lemma finishes because $T_{\lambda_1, \dots, \lambda_n}^m (f)\in L^2((0,\infty)^n, \overset{n}{\underset{j=1}{\prod}}x_j^{2\lambda_j}dx)\subset L^1_{loc}((0,\infty)^n)$.
   \end{proof}

   We now establish a crucial result in order to prove Theorem~\ref{multiplier}.

   \begin{Prop}\label{integral representation}
     Let $\lambda_j>-1/2$, $j=1, \dots, n$. Assume that $f \in C_c^\infty((0,\infty)^n)$. Then
     $$T_{\lambda_1, \dots, \lambda_n}^m (f)(x)= -\lim_{\varepsilon \rightarrow 0^+} \left( \alpha(\varepsilon)f(x)+\int_{(0,\infty)^n, \ \lvert y-x \rvert>\varepsilon}
       f(y) K_{\lambda_1, \dots, \lambda_n}^\phi (x,y) \prod_{j=1}^n y_j^{2\lambda_j} dy  \right), \ \text{a.e. } x \in (0,\infty)^n, $$
     where
     $$K_{\lambda_1, \dots, \lambda_n}^\phi (x,y)= \int_0^\infty \phi(t) \frac{\partial}{\partial t} \prod_{j=1}^n W_t^{\lambda_j}(x_j,y_j) dt, \quad x,y \in (0,\infty)^n,\,\,\,x\neq y,$$
     and $\alpha$ is a bounded function on $(0,\infty)$. Moreover, if there exists the limit $\displaystyle \phi(0^+)=\lim_{t \rightarrow 0^+} \phi(t)$, then
     $$T_{\lambda_1, \dots, \lambda_n}^m (f)(x)= C \phi(0^+) f(x)-\lim_{\varepsilon \rightarrow 0^+} \left(\int_{(0,\infty)^n, \ \lvert y-x \rvert>\varepsilon}
       f(y) K_{\lambda_1, \dots, \lambda_n}^\phi (x,y) \prod_{j=1}^n y_j^{2\lambda_j} dy \right)  , \ \text{a.e. } x \in (0,\infty)^n,$$
     being $C$ a positive constant.
   \end{Prop}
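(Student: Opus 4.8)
The plan is to start from the identity established in Lemma~\ref{lemma2.1}, namely
\begin{equation*}
T_{\lambda_1, \dots, \lambda_n}^m(f)(x)=\int_0^\infty \phi(t)\, h_{\lambda_1, \dots, \lambda_n}
\left( \sum_{j=1}^n y_j^2 e^{- t \sum_{j=1}^n y_j^2 }\, h_{\lambda_1, \dots, \lambda_n}(f)(y)\right)(x)\, dt,
\end{equation*}
and to recognize the inner expression as a $t$-derivative of the heat semigroup. Using the eigenvalue relation \eqref{1.1} together with the integral representation \eqref{heatkernel}, one checks that $h_{\lambda_1, \dots, \lambda_n}\big( e^{- t |y|^2}\, h_{\lambda_1, \dots, \lambda_n}(f)(y)\big)(x)=W_t^{\lambda_1,\dots,\lambda_n}(f)(x)$, so that the factor $\sum_j y_j^2\, e^{-t|y|^2}$ corresponds to $-\tfrac{\partial}{\partial t}$ acting on the heat kernel. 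Thus formally
\begin{equation*}
T_{\lambda_1, \dots, \lambda_n}^m(f)(x)=-\int_0^\infty \phi(t)\, \frac{\partial}{\partial t}\, W_t^{\lambda_1,\dots,\lambda_n}(f)(x)\, dt
=-\int_0^\infty \phi(t)\int_{(0,\infty)^n} f(y)\, \frac{\partial}{\partial t}\prod_{j=1}^n W_t^{\lambda_j}(x_j,y_j)\prod_{j=1}^n y_j^{2\lambda_j}\,dy\, dt,
\end{equation*}
which already exhibits the kernel $K_{\lambda_1, \dots, \lambda_n}^\phi$ once the order of integration is interchanged.

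The main obstacle is that this interchange is not legitimate as written: the kernel $\tfrac{\partial}{\partial t}\prod_j W_t^{\lambda_j}(x_j,y_j)$ is not integrable in $t$ near $t=0$ on the diagonal $y=x$, which is precisely why a principal value and a correction term $\alpha(\varepsilon)f(x)$ must appear. To handle this I would introduce the truncation $|y-x|>\varepsilon$ and split the $t$-integral, or equivalently integrate by parts in $t$ after inserting the truncation. The key is to write, for fixed $\varepsilon>0$,
\begin{equation*}
\int_{(0,\infty)^n,\ |y-x|>\varepsilon} f(y)\, K_{\lambda_1, \dots, \lambda_n}^\phi(x,y)\prod_{j=1}^n y_j^{2\lambda_j}\,dy
\end{equation*}
and to compare it with the full expression $T_{\lambda_1, \dots, \lambda_n}^m(f)(x)$. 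On the region $|y-x|\le\varepsilon$ one uses the small-time behavior of the heat kernel: the estimate \eqref{I1} shows that $W_t^{\lambda}(x,y)$ behaves like a Gaussian $\frac{C}{t^{1/2}}e^{-(x-y)^2/4t}$ up to the slowly varying factor $(xy)^{-\lambda}$, so that $\frac{1}{t}\sum_j\frac{(x_j-y_j)^2}{4t}$-type terms arise from the $t$-derivative, and the mass of $-\tfrac{\partial}{\partial t}W_t$ concentrates at the diagonal as $t\to 0^+$. I would extract from this concentration the local contribution near $x$, which produces the multiple $\alpha(\varepsilon)f(x)$ of the delta-type behavior.

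The function $\alpha(\varepsilon)$ is then defined by collecting exactly the diagonal part, something like
\begin{equation*}
\alpha(\varepsilon)=\int_0^\infty \phi(t)\, \frac{\partial}{\partial t}\!\left(\int_{|y-x|\le\varepsilon}\prod_{j=1}^n W_t^{\lambda_j}(x_j,y_j)\prod_{j=1}^n y_j^{2\lambda_j}\,dy\right)dt,
\end{equation*}
and the boundedness of $\alpha$ follows from $\phi\in L^\infty$ together with the fact that the heat semigroup is conservative-like, so the inner integral in parentheses is a bounded function of $t$ whose $t$-derivative is integrable against $\phi$. For the final refinement, when $\phi(0^+)$ exists, I would show that $\lim_{\varepsilon\to 0^+}\alpha(\varepsilon)$ exists and equals $-C\phi(0^+)$: as $\varepsilon\to 0^+$ the localized heat mass tends to $1$ at $t\to 0^+$ and to $0$ as $t\to\infty$, so integration by parts in $t$ gives a boundary term governed by $\phi(0^+)$, with $C$ the positive normalizing constant coming from \eqref{I1}. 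The delicate points to control rigorously are the uniform domination needed for dominated convergence in $t$ (using $\sup_{z>0}z^k e^{-z}<\infty$ and \eqref{I1}, \eqref{I2}) and the justification that the off-diagonal limit defining the principal value exists almost everywhere, which rests on the smoothness and compact support of $f$.
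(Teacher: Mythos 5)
Your route differs from the paper's: you differentiate the heat semigroup directly in $t$, writing $T_{\lambda_1,\dots,\lambda_n}^m f(x)=-\int_0^\infty\phi(t)\,\partial_t W_t^{\lambda_1,\dots,\lambda_n}(f)(x)\,dt$, then split into $|y-x|>\varepsilon$ and $|y-x|\le\varepsilon$ and try to extract $\alpha(\varepsilon)f(x)$ from the diagonal piece. The paper instead uses \eqref{H1} to move $\Delta_{\lambda_1,\dots,\lambda_n}$ onto $f$, subtracts the constant $\chi_{(1,\infty)}(t)t^{-\sum_j(\lambda_j+1/2)}\big/\prod_j 2^{2\lambda_j}\Gamma(\lambda_j+1/2)$ (legitimate by \eqref{2.5}) to gain absolute convergence, and integrates by parts in the space variable, the boundary terms on $|z-x|=\varepsilon$ producing $\alpha(\varepsilon)f(x)$. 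Several of your steps are sound in outline: the identification of $\sum_j y_j^2e^{-t|y|^2}$ with $-\partial_t$, the Fubini interchange on $|y-x|>\varepsilon$ for fixed $\varepsilon$, and the replacement $f(y)\to f(x)$ on $|y-x|\le\varepsilon$ using $|f(y)-f(x)|\le C|y-x|$ together with $\int_0^\infty\big|\partial_t\prod_jW_t^{\lambda_j}(x_j,y_j)\big|\,dt\le C|x-y|^{-n}$ (these interchanges do require the global kernel bounds the paper derives in \eqref{Z1} and \eqref{ABC}, which you only gesture at).

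The genuine gap is in your treatment of $\alpha(\varepsilon)$. As you define it,
\[
\alpha(\varepsilon)=\int_0^\infty \phi(t)\,\frac{\partial}{\partial t}\Big(\int_{|y-x|\le\varepsilon}\prod_{j=1}^n W_t^{\lambda_j}(x_j,y_j)\prod_{j=1}^n y_j^{2\lambda_j}\,dy\Big)\,dt,
\]
it depends on $x$, whereas the Proposition asserts that $\alpha$ is a bounded function of $\varepsilon$ alone. Moreover, your two supporting arguments use properties the Bessel kernel does not have: the ``conservative mass'' argument for boundedness and the scaling substitution $t=s\varepsilon^2$ producing the limit $-C\phi(0^+)$ both work cleanly only for the classical Gaussian $\mathbb{W}_t$, which is translation invariant and self-similar; the localized mass of $\prod_j W_t^{\lambda_j}$ is neither. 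What is missing is exactly the near-diagonal comparison that occupies the bulk of the paper's proof: replacing $\partial_t$ of the Bessel kernel (times the measure) by $\partial_t$ of the Gaussian, with errors that vanish as $\varepsilon\to0^+$ after integration against $\phi$ --- this is the content of \eqref{2.27}, \eqref{2.30}, \eqref{F1}, \eqref{F2}, \eqref{F3} combined with \cite[Lemma 1.1]{ST}. Note also that \eqref{I1}, which you invoke for the Gaussian behavior, only governs the regime $x_jy_j/t\lesssim 1$; near the diagonal for small $t$ one needs the large-argument expansion \eqref{I2} with quantified error terms. Without this comparison, your $\alpha$ is $x$-dependent and neither its boundedness uniform in $\varepsilon$ nor the existence of $\lim_{\varepsilon\to0^+}\alpha(\varepsilon)=-C\phi(0^+)$ is established; so the proof is incomplete as written, although its architecture can be completed precisely by importing the paper's kernel comparisons.
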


   \begin{proof}
     Assume that $n\ge 2$. When $n=1$ we can proceed in a similar way. By Lemma~\ref{lemma2.1} and \eqref{H1} we can write, for a.e. $x \in (0,\infty)^n,$
     \begin{equation}\label{2.2}
       T_{\lambda_1, \dots, \lambda_n}^m(f)(x)=\int_0^\infty \phi(t) h_{\lambda_1, \dots, \lambda_n}
       \left( e^{- t \overset{n}{\underset{j=1}{\sum}} y_j^2 }  h_{\lambda_1, \dots, \lambda_n}(\Delta_{\lambda_1, \dots, \lambda_n}f)(y)\right)(x) dt.
     \end{equation}
     Fix $x\in (0,\infty)^n$ such that (\ref{2.2}) holds. According to \eqref{J1} and \eqref{J2}, for each $0 \leq r,s \leq n$, it follows that,
     \begin{align*}
       \int_0^{1/x_1} & \dots \int_0^{1/x_r} \int_{1/x_{r+1}}^\infty \dots \int_{1/x_n}^\infty \prod_{j=1}^n \left\lvert (x_jy_j)^{-\lambda_j+1/2}J_{\lambda_j-1/2}(x_jy_j) \right\rvert
        e^{- t \overset{n}{\underset{j=1}{\sum}} y_j^2 } \\
       & \times \int_0^{1/y_1}  \dots \int_0^{1/y_s} \int_{1/y_{s+1}}^\infty \dots \int_{1/y_n}^\infty \prod_{j=1}^n \left\lvert (y_jz_j)^{-\lambda_j+1/2}J_{\lambda_j-1/2}(y_jz_j) \right\rvert \\
       & \times  \left \lvert \Delta_{\lambda_1, \dots, \lambda_n}f(z) \right \rvert \prod_{j=1}^n z_j^{2\lambda_j} dz \prod_{j=1}^n y_j^{2\lambda_j} dy\\
       \leq & C \prod_{j=r+1}^n x_j^{-\lambda_j} \int_{(0,\infty)^n}  e^{- t \overset{n}{\underset{j=1}{\sum}} y_j^2 } \prod_{j=r+1}^n y_j^{-\lambda_j}\prod_{j=1}^s y_j^{2\lambda_j} \prod_{j=s+1}^n y_j^{\lambda_j}dy \\
       & \times  \int_{(0,\infty)^n}   \left \lvert \Delta_{\lambda_1, \dots, \lambda_n}f(z) \right \rvert
                \prod_{j=1}^s z_j^{2\lambda_j} \prod_{j=s+1}^n z_j^{\lambda_j} dz <\infty, \quad t>0.\\
     \end{align*}
     Then, by interchanging the order of integration and by using \eqref{heatkernel}, we get
     \begin{align}\label{2.3}
       h_{\lambda_1, \dots, \lambda_n}& \left( e^{- t \overset{n}{\underset{j=1}{\sum}} y_j^2 }  h_{\lambda_1, \dots, \lambda_n}(\Delta_{\lambda_1, \dots, \lambda_n}f)(y)\right)(x) \nonumber\\
       =& \int_{(0,\infty)^n} \Delta_{\lambda_1, \dots, \lambda_n}f(z) \prod_{j=1}^n  \int_0^\infty e^{- t y_j^2 } (x_jy_j)^{-\lambda_j+1/2}J_{\lambda_j-1/2}(x_jy_j) \nonumber\\
        & \times (y_jz_j)^{-\lambda_j+1/2}J_{\lambda_j-1/2}(y_jz_j) y_j^{2\lambda_j} dy_j  \prod_{j=1}^n z_j^{2\lambda_j} dz \nonumber\\
        = & \int_{(0,\infty)^n} \Delta_{\lambda_1, \dots, \lambda_n}f(z) \prod_{j=1}^n W_t^{\lambda_j} (x_j,z_j) \prod_{j=1}^n z_j^{2\lambda_j} dz,
        \quad t>0 ,
     \end{align}
     where, for every $\alpha>-1/2$, $W_t^\alpha(u,v)$, $t,u,v\in (0,\infty)$, is defined in (\ref{F11}).

    We choose $a>1$ such that $supp\,f\subset K^n$, where $K=[1/a,a]$. If $\alpha>-1/2$, by \eqref{I1}, \eqref{I2} and \eqref{I3}, we obtain, for every $b>0$,
     \begin{align*}
       \Big \lvert W_t^\alpha(u,v) - &\frac{t^{-\alpha-1/2}}{2^{2\alpha}\Gamma(\alpha+1/2)} \Big \rvert
         = \Big \lvert \frac{1}{(2t)^{\alpha+1/2}} \left( \frac{uv}{2t}\right)^{-\alpha+1/2} I_{\alpha-1/2}\left(\frac{uv}{2t}\right)e^{-\frac{u^2+v^2}{4t}} - \frac{t^{-\alpha-1/2}}{2^{2\alpha}\Gamma(\alpha+1/2)} \Big  \rvert\\
         \leq & \frac{1}{(2t)^{\alpha+1/2}} e^{-\frac{u^2+v^2}{4t}} \Big \lvert \left( \frac{uv}{2t}\right)^{-\alpha+1/2} I_{\alpha-1/2}\left(\frac{uv}{2t}\right) - \frac{1}{2^{\alpha-1/2}\Gamma(\alpha+1/2)} \Big  \rvert\\
       & + \frac{t^{-\alpha-1/2}}{2^{2\alpha}\Gamma(\alpha+1/2)} \Big \lvert e^{-\frac{u^2+v^2}{4t}} -1 \Big \rvert\\
       \leq & \frac{uv}{(2t)^{\alpha+3/2}} e^{-\frac{u^2+v^2}{4t}}  \sup_{z \in (0,\frac{uv}{2t})} \Big \lvert \frac{d}{dz} \left( z^{-\alpha+1/2} I_{\alpha-1/2}(z) \right) \Big \rvert
         + C \frac{u^2+v^2}{t^{\alpha+3/2}}\\
         \leq & C \left( \frac{(uv)^2}{t^{\alpha+5/2}}  + \frac{u^2+v^2}{t^{\alpha+3/2}} \right)\\
         \leq & C \frac{1}{t^{\alpha+3/2}} , \quad t>1,\,\,0<u\le b,\,\and\,\,v\in K.
     \end{align*}
     Then, by using again (\ref{I1}) and (\ref{I2}), it follows that
     \begin{align}\label{2.4}
       \Big \lvert \prod_{j=1}^n W_t^{\lambda_j}(x_j,y_j)&
       - \frac{t^{-\overset{n}{\underset{j=1}{\sum}}(\lambda_j+1/2)}}{\overset{n}{\underset{j=1}{\prod}}2^{2\lambda_j}\Gamma(\lambda_j+1/2)} \Big \rvert \nonumber\\
       \leq & \sum_{i=1}^n \prod_{j=1}^{i-1} \frac{t^{-(\lambda_j+1/2)}}{2^{2\lambda_j}\Gamma(\lambda_j+1/2)}
       \Big \lvert W_t^{\lambda_i}(x_i,y_i)
       - \frac{t^{-(\lambda_i+1/2)}}{2^{2\lambda_i}\Gamma(\lambda_i+1/2)} \Big \rvert \prod_{j=i+1}^n W_t^{\lambda_j}(x_j,y_j) \nonumber\\
       \leq &  C \frac{1}{t^{\overset{n}{\underset{j=1}{\sum}}(\lambda_j+1/2)+1}},
        \quad t>1 \text{ and }  y \in K^n.
     \end{align}

     On the other hand, by \eqref{J1} and \eqref{H1} we have
     \begin{align}\label{2.5}
       \int_{(0,\infty)^n}& \Delta_{\lambda_1, \dots, \lambda_n}f(z) \prod_{j=1}^n z_j^{2\lambda_j} dz  =  \prod_{j=1}^n 2^{\lambda_j-1/2}\Gamma(\lambda_j+1/2) \nonumber \\
       \times &\lim_{y \rightarrow 0} \int_{(0,\infty)^n} \Delta_{\lambda_1, \dots, \lambda_n}f(z) \prod_{j=1}^n (z_jy_j)^{-\lambda_j+1/2}
         J_{\lambda_j-1/2}(z_jy_j) \prod_{j=1}^n z_j^{2\lambda_j} dz \nonumber \\
        = &  \prod_{j=1}^n 2^{\lambda_j-1/2}\Gamma(\lambda_j+1/2) \lim_{y \rightarrow 0} \sum_{j=1}^n y_j^2 h_{\lambda_1, \dots, \lambda_n}(f)(y)=0.
     \end{align}
     According to \eqref{2.2}, \eqref{2.3}, and \eqref{2.5} (suggested by \eqref{2.4}), we can write
     \begin{align}\label{2.6}
       T_{\lambda_1, \dots, \lambda_n}^m(f)(x)=&\int_0^\infty \phi(t) \int_{(0,\infty)^n} \Delta_{\lambda_1, \dots, \lambda_n}f(z)
       \left(\prod_{j=1}^n W_t^{\lambda_j} (x_j,z_j) - \frac{\chi_{(1,\infty)}(t)t^{-\overset{n}{\underset{j=1}{\sum}}(\lambda_j+1/2)}}
       {\overset{n}{\underset{j=1}{\prod}}2^{2\lambda_j}\Gamma(\lambda_j+1/2)} \right) \nonumber\\
       & \times \prod_{j=1}^n z_j^{2\lambda_j} dz dt.
     \end{align}
     The last integral is absolutely convergent. Indeed, in order to do this we split the integral
     in the following way
     \begin{align*}
       \int_0^\infty& \lvert \phi(t) \rvert \int_{(0,\infty)^n} \left \lvert \Delta_{\lambda_1, \dots, \lambda_n}f(z) \right\rvert \
       \left \lvert \prod_{j=1}^n W_t^{\lambda_j} (x_j,z_j) - \frac{\chi_{(1,\infty)}(t)t^{-\overset{n}{\underset{j=1}{\sum}}(\lambda_j+1/2)}}
       {\overset{n}{\underset{j=1}{\prod}}2^{2\lambda_j}\Gamma(\lambda_j+1/2)} \right \rvert \prod_{j=1}^n z_j^{2\lambda_j} dz dt \\
       \leq & C \left( \int_0^1 + \int_1^\infty \right) \int_{(0,\infty)^n}\left \lvert \Delta_{\lambda_1, \dots, \lambda_n}f(z) \right\rvert
       \left \lvert \prod_{j=1}^n W_t^{\lambda_j} (x_j,z_j) - \frac{\chi_{(1,\infty)}(t)t^{-\overset{n}{\underset{j=1}{\sum}}(\lambda_j+1/2)}}
       {\overset{n}{\underset{j=1}{\prod}}2^{2\lambda_j}\Gamma(\lambda_j+1/2)} \right \rvert
        \prod_{j=1}^n z_j^{2\lambda_j} dz dt\\
       = & I_1(x) + I_2(x).
     \end{align*}
     Since by using the inversion formula for Hankel transform and \eqref{heatkernel} we get
     $$\int_0^\infty W_t^\alpha(u,z)z^{2\alpha} dz=1, \quad u \in (0,\infty),$$
     when $\alpha>-1/2$, it follows that
     $$ I_1(x) \leq C \int_0^1 \int_{(0,\infty)^n} \left \lvert \Delta_{\lambda_1, \dots, \lambda_n}f(z) \right\rvert
     \prod_{j=1}^n W_t^{\lambda_j} (x_j,z_j)\prod_{j=1}^n z_j^{2\lambda_j} dz dt \leq C. $$
     Also, by \eqref{2.4} we obtain
     \begin{align*}
       I_2(x) \leq & C\int_1^\infty \frac{1}{t^{\overset{n}{\underset{j=1}{\sum}}(\lambda_j+1/2)+1}}
          \int_{K^n}  \left \lvert \Delta_{\lambda_1, \dots, \lambda_n}f(z) \right\rvert
          \prod_{j=1}^n z_j^{2\lambda_j} dz dt\le C.
     \end{align*}

     By \eqref{2.6} we have
     \begin{align}\label{2.7}
       T_{\lambda_1, \dots, \lambda_n}^m(f)(x)=& \lim_{\varepsilon \rightarrow 0^+}\int_0^\infty \phi(t)
       \int_{\tiny{\begin{array}{l}(0,\infty)^n\\ \lvert z-x \rvert > \varepsilon \end{array}}} \Delta_{\lambda_1, \dots, \lambda_n}f(z) \nonumber \\
       & \times \left(\prod_{j=1}^n W_t^{\lambda_j} (x_j,z_j) - \frac{\chi_{(1,\infty)}(t)t^{-\overset{n}{\underset{j=1}{\sum}}(\lambda_j+1/2)}}
       {\overset{n}{\underset{j=1}{\prod}}2^{2\lambda_j}\Gamma(\lambda_j+1/2)} \right) \prod_{j=1}^n z_j^{2\lambda_j} dz dt.
     \end{align}

     Assume that $\varepsilon$ is small enough, for instance, $0<\varepsilon<x_i/2$, $i=1,...,n$. We now analyze the integral
     $$I^\varepsilon(x,t)= \int_{\tiny{\begin{array}{l}K^n\\ \lvert z-x \rvert > \varepsilon \end{array}}}
     \Delta_{\lambda_1,z_1}f(z)
     \left(\prod_{j=1}^n W_t^{\lambda_j} (x_j,z_j) - \frac{\chi_{(1,\infty)}(t)t^{-\overset{n}{\underset{j=1}{\sum}}(\lambda_j+1/2)}}
     {\overset{n}{\underset{j=1}{\prod}}2^{2\lambda_j}\Gamma(\lambda_j+1/2)} \right) \prod_{j=1}^n z_j^{2\lambda_j} dz,\,\,\,t>0.$$
     The study of the integral involving $\Delta_{\lambda_j,z_j}$, $j=2, \dots, n$, can be made in a similar way. We write $\bar{y}=(y_2, \dots, y_n)$
     when $y=(y_1, y_2, \dots, y_n) \in (0,\infty)^n$. We can write
     \begin{align}\label{2.8}
       I^\varepsilon(x,t) & = \int_{\lvert \bar{z} - \bar{x} \rvert > \varepsilon} \int_0^\infty \Delta_{\lambda_1,z_1}f(z)
       \left(\prod_{j=1}^n W_t^{\lambda_j} (x_j,z_j) - \frac{\chi_{(1,\infty)}(t)t^{-\overset{n}{\underset{j=1}{\sum}}(\lambda_j+1/2)}}
       {\overset{n}{\underset{j=1}{\prod}}2^{2\lambda_j}\Gamma(\lambda_j+1/2)} \right) \prod_{j=1}^n z_j^{2\lambda_j} dz \nonumber\\
       & + \int_{\lvert \bar{z} - \bar{x} \rvert < \varepsilon} \left( \int_0^{x_1-\sqrt{\varepsilon^2 - \lvert \bar{z} - \bar{x} \rvert^2}}
         + \int_{x_1+\sqrt{\varepsilon^2 - \lvert \bar{z} - \bar{x} \rvert^2}}^\infty \right) \Delta_{\lambda_1,z_1}f(z) \nonumber\\
       & \times \left(\prod_{j=1}^n W_t^{\lambda_j} (x_j,z_j) - \frac{\chi_{(1,\infty)}(t)t^{-\overset{n}{\underset{j=1}{\sum}}(\lambda_j+1/2)}}
       {\overset{n}{\underset{j=1}{\prod}}2^{2\lambda_j}\Gamma(\lambda_j+1/2)} \right) \prod_{j=1}^n z_j^{2\lambda_j} dz \nonumber\\
       = & I_1^\varepsilon(x,t) + I_2^\varepsilon(x,t), \quad t>0.
     \end{align}
     By partial integration we obtain
     \begin{align*}
       \int_0^\infty & \Delta_{\lambda_1,z_1}f(z)
       \left(\prod_{j=1}^n W_t^{\lambda_j} (x_j,z_j) - \frac{\chi_{(1,\infty)}(t)t^{-\overset{n}{\underset{j=1}{\sum}}(\lambda_j+1/2)}}
       {\overset{n}{\underset{j=1}{\prod}}2^{2\lambda_j}\Gamma(\lambda_j+1/2)} \right) z_1^{2\lambda_1} dz_1\\
       =& - z_1^{2\lambda_1} \frac{\partial}{\partial z_1}f(z)
        \left.\left(\prod_{j=1}^n W_t^{\lambda_j} (x_j,z_j) - \frac{\chi_{(1,\infty)}(t)t^{-\overset{n}{\underset{j=1}{\sum}}(\lambda_j+1/2)}}
       {\overset{n}{\underset{j=1}{\prod}}2^{2\lambda_j}\Gamma(\lambda_j+1/2)} \right)\right]_0^\infty\\
       & + \left. z_1^{2\lambda_1} \frac{\partial}{\partial z_1}\left(\prod_{j=1}^n W_t^{\lambda_j} (x_j,z_j)\right) f(z)\right]_0^\infty
         + \int_0^\infty f(z) \Delta_{\lambda_1, z_1} W_t^{\lambda_1}(x_1,z_1)\prod_{j=2}^n W_t^{\lambda_j} (x_j,z_j) z_1^{2\lambda_1} dz_1\\
         =& \int_ {1/a}^a f(z) \Delta_{\lambda_1, z_1} W_t^{\lambda_1}(x_1,z_1)\prod_{j=2}^n W_t^{\lambda_j} (x_j,z_j) z_1^{2\lambda_1} dz_1,
         \quad t>0 \text{ and } \bar{z} \in K^{n-1}.
     \end{align*}
      Differentiating in (\ref{heatkernel}), by using \eqref{J3} and \cite[(5.3.6), p. 103]{L}, we get  $-\Delta_{\alpha,u}W_t^\alpha(u,v)=\frac{\partial}{\partial t}W_t^\alpha(u,v)$, $u,v,t>0$, $\alpha>-1/2$, and it follows that
     \begin{align}\label{2.9}
       I_1^\varepsilon(x,t) & =
       \int_{\lvert \bar{z} - \bar{x} \rvert > \varepsilon} \int_0^\infty \Delta_{\lambda_1,z_1}f(z)
       \left(\prod_{j=1}^n W_t^{\lambda_j} (x_j,z_j) - \frac{\chi_{(1,\infty)}(t)t^{-\overset{n}{\underset{j=1}{\sum}}(\lambda_j+1/2)}}
       {\overset{n}{\underset{j=1}{\prod}}2^{2\lambda_j}\Gamma(\lambda_j+1/2)} \right) \prod_{j=1}^n z_j^{2\lambda_j} dz \nonumber\\
       & = -\int_{\lvert \bar{z} - \bar{x} \rvert > \varepsilon} \int_0^\infty f(z)
       \frac{\partial}{\partial t} W_t^{\lambda_1}(x_1,z_1)\prod_{j=2}^n W_t^{\lambda_j} (x_j,z_j) \prod_{j=1}^n z_j^{2\lambda_j} dz, \quad t>0.
     \end{align}

     In a similar way we obtain
     \begin{align*}
       \Big(&\int_0^{x_1-\sqrt{\varepsilon^2 - \lvert \bar{z} - \bar{x} \rvert^2}}
         + \int_{x_1+\sqrt{\varepsilon^2 - \lvert \bar{z} - \bar{x} \rvert^2}}^\infty \Big) \Delta_{\lambda_1,z_1}f(z)
       \left(\prod_{j=1}^n W_t^{\lambda_j} (x_j,z_j) - \frac{\chi_{(1,\infty)}(t)t^{-\overset{n}{\underset{j=1}{\sum}}(\lambda_j+1/2)}}
       {\overset{n}{\underset{j=1}{\prod}}2^{2\lambda_j}\Gamma(\lambda_j+1/2)} \right)z_1^{2\lambda_1}dz_1 \\
       =& \Big(\int_0^{x_1-\sqrt{\varepsilon^2 - \lvert \bar{z} - \bar{x} \rvert^2}}
         + \int_{x_1+\sqrt{\varepsilon^2 - \lvert \bar{z} - \bar{x} \rvert^2}}^\infty \Big)
         f(z) \Delta_{\lambda_1, z_1} W_t^{\lambda_1}(x_1,z_1)\prod_{j=2}^n W_t^{\lambda_j} (x_j,z_j) z_1^{2\lambda_1} dz_1\\
       & -\left( H_1(x,x_1-\sqrt{\varepsilon^2 - \lvert \bar{z} - \bar{x} \rvert^2},\bar{z},t)
         - H_1(x,x_1+\sqrt{\varepsilon^2 - \lvert \bar{z} - \bar{x} \rvert^2},\bar{z},t)\right)\\
       & + H_2(x,x_1-\sqrt{\varepsilon^2 - \lvert \bar{z} - \bar{x} \rvert^2},\bar{z},t)
         - H_2(x,x_1+\sqrt{\varepsilon^2 - \lvert \bar{z} - \bar{x} \rvert^2},\bar{z},t),\,\,|\bar{x}-\bar{z}|<\varepsilon\,\,\,and\,\,\,t>0.
     \end{align*}
     where
     $$H_1(x,z,t)= z_1^{2\lambda_1} \frac{\partial}{\partial z_1}f(z)
        \left(\prod_{j=1}^n W_t^{\lambda_j} (x_j,z_j) - \frac{\chi_{(1,\infty)}(t)t^{-\overset{n}{\underset{j=1}{\sum}}(\lambda_j+1/2)}}
       {\overset{n}{\underset{j=1}{\prod}}2^{2\lambda_j}\Gamma(\lambda_j+1/2)} \right), \ t>0 \text{ and } z \in (0,\infty)^n,$$
     and
     $$H_2(x,z,t)= z_1^{2\lambda_1} \frac{\partial}{\partial z_1}\left(\prod_{j=1}^n W_t^{\lambda_j} (x_j,z_j)\right) f(z), \quad t>0 \text{ and } z \in (0,\infty)^n.$$
     We have, by \eqref{2.9}, that
     \begin{align}\label{2.10}
       I_1^\varepsilon (x,t)& + \int_{\lvert \bar{z} - \bar{x} \rvert < \varepsilon} \left( \int_0^{x_1-\sqrt{\varepsilon^2 - \lvert \bar{z} - \bar{x} \rvert^2}}
         + \int_{x_1+\sqrt{\varepsilon^2 - \lvert \bar{z} - \bar{x} \rvert^2}}^\infty \right)
         f(z) \Delta_{\lambda_1, z_1} W_t^{\lambda_1}(x_1,z_1)\prod_{j=2}^n W_t^{\lambda_j} (x_j,z_j) \prod_{j=1}^n z_j^{2\lambda_j} dz \nonumber\\
       & = -\int_{\lvert z - x \rvert > \varepsilon} f(z)
       \frac{\partial}{\partial t} W_t^{\lambda_1}(x_1,z_1)\prod_{j=2}^n W_t^{\lambda_j} (x_j,z_j) \prod_{j=1}^n z_j^{2\lambda_j} dz, \quad t>0.
     \end{align}

      Moreover, according to
     \eqref{I1}, \eqref{I2}, and \eqref{I3} it follows that, if $\mathbb{K}$ is a compact subset of $(0,\infty)$, for every $z_1\in \mathbb{K}$,
     \begin{align}\label{2.11}
       \Big \lvert  \frac{\partial}{\partial z_1}  W_t^{\lambda_1}(x_1,z_1) \Big\rvert
         =& \Big \lvert \frac{\partial}{\partial z_1} \left( \frac{1}{(2t)^{\lambda_1+1/2}} \left( \frac{x_1z_1}{2t} \right)^{-\lambda_1+1/2}
         I_{\lambda_1-1/2}\left(  \frac{x_1z_1}{2t} \right) e^{-(x_1^2+z_1^2)/4t} \right) \Big\rvert \nonumber\\
         = & \frac{1}{(2t)^{\lambda_1+1/2}}
             \Big \lvert \frac{x_1}{2t} \left( \frac{x_1z_1}{2t} \right)^{-\lambda_1+1/2} I_{\lambda_1+1/2}\left(  \frac{x_1z_1}{2t} \right) \nonumber\\
            & - \frac{z_1}{2t}\left( \frac{x_1z_1}{2t} \right)^{-\lambda_1+1/2} I_{\lambda_1-1/2}\left(  \frac{x_1z_1}{2t} \right)\Big \rvert e^{-(x_1^2+z_1^2)/4t}\nonumber\\
         \leq & C \left\{\begin{array}{lcl}
                  \dfrac{1}{t^{\lambda_1+3/2}} &,& \,\,t\ge 1\\
                  &&\\
                  \dfrac{1}{t}e^{-(x_1-z_1)^2/8t}&,& \,\,0<t<1.\\
                \end{array} \right.
     \end{align}
     Then, since $f\in C_c^\infty((0,\infty)^n)$, by using main value theorem, \eqref{I1}, \eqref{2.4} and
     \eqref{2.11} imply that
     \begin{align}\label{2.12}
       \big \lvert H_1(x,x_1-&\sqrt{\varepsilon^2 - \lvert \bar{z} - \bar{x} \rvert^2},\bar{z},t)
        - H_1(x,x_1+\sqrt{\varepsilon^2 - \lvert \bar{z} - \bar{x} \rvert^2},\bar{z},t) \big \rvert \nonumber\\
         \leq & \big \lvert ( x_1-\sqrt{\varepsilon^2 - \lvert \bar{z} - \bar{x} \rvert^2})^{2\lambda_1} \Big(\frac{\partial}{\partial z_1}
         f\Big)(x_1-\sqrt{\varepsilon^2 - \lvert \bar{z} - \bar{x} \rvert^2},\bar{z}) \nonumber\\
         & - ( x_1+\sqrt{\varepsilon^2 - \lvert \bar{z} - \bar{x} \rvert^2})^{2\lambda_1} \Big(\frac{\partial}{\partial z_1}
         f\Big)(x_1+\sqrt{\varepsilon^2 - \lvert \bar{z} - \bar{x} \rvert^2},\bar{z}) \big\rvert \nonumber\\
         & \times \Big \lvert W_t^{\lambda_1}(x_1,x_1+\sqrt{\varepsilon^2 - \lvert \bar{z} - \bar{x} \rvert^2})\prod_{j=2}^n W_t^{\lambda_j} (x_j,z_j)
         - \frac{\chi_{(1,\infty)}(t)t^{-\overset{n}{\underset{j=1}{\sum}}(\lambda_j+1/2)}}
         {\overset{n}{\underset{j=1}{\prod}}2^{2\lambda_j}\Gamma(\lambda_j+1/2)} \Big \rvert  \nonumber\\
         & + \Big \lvert ( x_1-\sqrt{\varepsilon^2 - \lvert \bar{z} - \bar{x} \rvert^2})^{2\lambda_1} \left(\frac{\partial}{\partial z_1}
         f\right)(x_1-\sqrt{\varepsilon^2 - \lvert \bar{z} - \bar{x} \rvert^2},\bar{z}) \Big \rvert \nonumber \\
         & \times \Big \lvert W_t^{\lambda_1}(x_1,x_1-\sqrt{\varepsilon^2 - \lvert \bar{z} - \bar{x} \rvert^2})
         - W_t^{\lambda_1}(x_1,x_1+\sqrt{\varepsilon^2 - \lvert \bar{z} - \bar{x} \rvert^2}) \Big \rvert \prod_{j=2}^n W_t^{\lambda_j}(x_j,z_j) \nonumber\\
         \leq & C \frac{\varepsilon}{t^{\overset{n}{\underset{j=1}{\sum}}(\lambda_j+1/2)+1}},\,\, t\geq 1,\,\,  \bar{z} \in K^{n-1} \,\,\text{ and }\,\, \lvert \bar{z}-\bar{x} \rvert<\varepsilon.
     \end{align}


     Also, from \eqref{I2} and \eqref{2.11} we deduce
     \begin{align}\label{2.14}
       \big \lvert H_1(x,x_1-&\sqrt{\varepsilon^2 - \lvert \bar{z} - \bar{x} \rvert^2},\bar{z},t)
        - H_1(x,x_1+\sqrt{\varepsilon^2 - \lvert \bar{z} - \bar{x} \rvert^2},\bar{z},t) \big \rvert \nonumber\\
        \leq & C \left\{ \sqrt{\varepsilon^2 - \lvert \bar{z} - \bar{x} \rvert^2}
           \frac{(x_1(x_1+\sqrt{\varepsilon^2 - \lvert \bar{z} - \bar{x} \rvert^2}))^{-\lambda_1}}{\sqrt{t}} \right.\nonumber\\
        & + \Big \lvert ( x_1-\sqrt{\varepsilon^2 - \lvert \bar{z} - \bar{x} \rvert^2})^{2\lambda_1} \Big(\frac{\partial}{\partial z_1}
         f\Big)(x_1-\sqrt{\varepsilon^2 - \lvert \bar{z} - \bar{x} \rvert^2},\bar{z}) \Big \rvert  \nonumber\\
        & \times \left. \int_{x_1-\sqrt{\varepsilon^2 - \lvert \bar{z} - \bar{x} \rvert^2}}^{x_1+\sqrt{\varepsilon^2 - \lvert \bar{z} - \bar{x} \rvert^2}}
           \Big\lvert \frac{\partial}{\partial z_1} W_t^{\lambda_1}(x_1,z_1)\Big\rvert dz_1  \right\} \prod_{j=2}^n W_t^{\lambda_j}(x_j,z_j) \nonumber \\
         \leq & C \left\{  \frac{\varepsilon}{\sqrt{t}}
          +
           \int_{x_1-\sqrt{\varepsilon^2 - \lvert \bar{z} - \bar{x} \rvert^2}}^{x_1+\sqrt{\varepsilon^2 - \lvert \bar{z} - \bar{x} \rvert^2}}
           \frac{e^{- \lvert x_1-z_1 \rvert^2/8t}}{t} dz_1 \right\} \nonumber\\
         & \times  \prod_{j=2}^n \frac{1}{\sqrt{t}}e^{-(x_j-z_j)^2/4t}  ,
         \ 0<t<1, \,\, \bar{z} \in K^{n-1} \text{ and } \lvert \bar{z}-\bar{x} \rvert<\varepsilon.
     \end{align}

     By \eqref{2.12}, we can write
     \begin{align}\label{2.15.a}
       \int_1^\infty & \lvert \phi(t) \rvert \int_{\bar{z}\in K^{n-1},\,\,\lvert \bar{z} - \bar{x} \rvert < \varepsilon }
        \big \lvert H_1(x,x_1-\sqrt{\varepsilon^2 - \lvert \bar{z} - \bar{x} \rvert^2},\bar{z},t)
        - H_1(x,x_1+\sqrt{\varepsilon^2 - \lvert \bar{z} - \bar{x} \rvert^2},\bar{z},t) \big \rvert \prod_{j=2}^n z_j^{2\lambda_j} d\bar{z} dt \nonumber\\
        \leq & C \varepsilon \int_1^\infty \int_{\lvert \bar{z} - \bar{x} \rvert < \varepsilon }
        \frac{1}{t^{\overset{n}{\underset{j=1}{\sum}}(\lambda_j+1/2)+1}} \prod_{j=2}^n z_j^{2\lambda_j} d\bar{z} dt
           \longrightarrow 0 , \ \varepsilon \rightarrow 0^+.
     \end{align}

     By \eqref{2.14} and by using \cite[Lemma 1.1]{ST}, when $n>2$ it follows
     \begin{align}\label{2.15.b}
       \int_0^1 & \lvert \phi(t) \rvert \int_{\bar{z}\in K^{n-1},\,\,\lvert \bar{z} - \bar{x} \rvert < \varepsilon }
        \big \lvert H_1(x,x_1-\sqrt{\varepsilon^2 - \lvert \bar{z} - \bar{x} \rvert^2},\bar{z},t)
        - H_1(x,x_1+\sqrt{\varepsilon^2 - \lvert \bar{z} - \bar{x} \rvert^2},\bar{z},t) \big \rvert \prod_{j=2}^n z_j^{2\lambda_j} d\bar{z} dt\nonumber\\
        \leq & C\Big(\varepsilon \int_0^1 \int_{\bar{z}\in K^{n-1},\,\,\lvert \bar{z} - \bar{x} \rvert < \varepsilon }\frac{e^{-\lvert \bar{z} - \bar{x} \rvert ^2/4t}}{t^{n/2}}d\bar{z}dt
        + \int_0^1   \int_{\bar{z}\in K^{n-1},\,\,\lvert \bar{z} - \bar{x} \rvert < \varepsilon }
        \int_{x_1-\sqrt{\varepsilon^2 - \lvert \bar{z} - \bar{x} \rvert^2}}^{x_1+\sqrt{\varepsilon^2 - \lvert \bar{z} - \bar{x} \rvert^2}}\frac{e^{-\lvert z - x \rvert ^2/8t}}{t^{(n+1)/2}}dzdt\Big)\nonumber\\
        \leq& C\Big(\varepsilon \int_{\lvert \bar{z} - \bar{x} \rvert<\varepsilon}\frac{d\bar{z}}{\lvert \bar{z} - \bar{x} \rvert ^{n-2}}+\int_{\lvert {z} - {x} \rvert<\varepsilon}\frac{d{z}}{\lvert {z} - {x} \rvert ^{n-1}}\Big)\to 0,\,\,\mbox{as}\,\,\varepsilon\to 0^+.
     \end{align}
     For $n=2$ we can proceed analogously.

     We now write, for each $t>0$, $\bar{z} \in K^{n-1}$ and $\lvert \bar{z} - \bar{x} \rvert < \varepsilon$,
     \begin{align*}
       \big \lvert H_2&(x,x_1-\sqrt{\varepsilon^2 - \lvert \bar{z} - \bar{x} \rvert^2},\bar{z},t)
        - H_2(x,x_1+\sqrt{\varepsilon^2 - \lvert \bar{z} - \bar{x} \rvert^2},\bar{z},t) \big \rvert \\
        \leq & \big \lvert (x_1-\sqrt{\varepsilon^2 - \lvert \bar{z} - \bar{x} \rvert^2})^{2\lambda_1} f(x_1-\sqrt{\varepsilon^2 - \lvert \bar{z} - \bar{x} \rvert^2},\bar{z})
        - (x_1+\sqrt{\varepsilon^2 - \lvert \bar{z} - \bar{x} \rvert^2})^{2\lambda_1}f(x_1+\sqrt{\varepsilon^2 - \lvert \bar{z} - \bar{x} \rvert^2},\bar{z}) \big \rvert \\
        & \times \Big \lvert \frac{\partial}{\partial z_1} W_t^{\lambda_1}(x_1,x_1+\sqrt{\varepsilon^2 - \lvert \bar{z} - \bar{x} \rvert^2}) \Big \rvert \prod_{j=2}^n W_t^{\lambda_j}(x_j,z_j)\\
        & + \big \lvert (x_1-\sqrt{\varepsilon^2 - \lvert \bar{z} - \bar{x} \rvert^2})^{2\lambda_1} f(x_1-\sqrt{\varepsilon^2 - \lvert \bar{z} - \bar{x} \rvert^2},\bar{z}) \big \rvert \\
        & \times \Big \lvert \frac{\partial}{\partial z_1} W_t^{\lambda_1}(x_1,x_1-\sqrt{\varepsilon^2 - \lvert \bar{z} - \bar{x} \rvert^2})
        - \frac{\partial}{\partial z_1} W_t^{\lambda_1}(x_1,x_1+\sqrt{\varepsilon^2 - \lvert \bar{z} - \bar{x} \rvert^2})\Big \rvert \prod_{j=2}^n W_t^{\lambda_j}(x_j,z_j).
     \end{align*}

     By  \eqref{2.11}, mean value theorem leads to
     \begin{align}\label{2.16}
        \big \lvert (x_1-&\sqrt{\varepsilon^2 - \lvert \bar{z} - \bar{x} \rvert^2})^{2\lambda_1} f(x_1-\sqrt{\varepsilon^2 - \lvert \bar{z} - \bar{x} \rvert^2},\bar{z})
        - (x_1+\sqrt{\varepsilon^2 - \lvert \bar{z} - \bar{x} \rvert^2})^{2\lambda_1}f(x_1+\sqrt{\varepsilon^2 - \lvert \bar{z} - \bar{x} \rvert^2},\bar{z}) \big \rvert \nonumber\\
        & \times \Big \lvert \frac{\partial}{\partial z_1} W_t^{\lambda_1}(x_1,x_1+\sqrt{\varepsilon^2 - \lvert \bar{z} - \bar{x} \rvert^2}) \Big \rvert \prod_{j=2}^n W_t^{\lambda_j}(x_j,z_j) \nonumber \\
        \leq &  C\frac{ \varepsilon}{t^{\lambda_1+3/2}}
         \prod_{j=2}^n \frac{1}{t^{\lambda_j+1/2}}  , \
         t \geq 1, \ \bar{z} \in K^{n-1} \text{ and } \lvert \bar{z} - \bar{x} \rvert < \varepsilon.
     \end{align}

     On the other hand, by \eqref{I3} we get
     \begin{align*}
       \frac{\partial^2}{\partial z_1^2} & W_t^{\lambda_1}(x_1,z_1) = \frac{e^{-(x_1^2+z_1^2)/4t}}{(2t)^{\lambda_1 + 1/2}}
       \left\{ \left( \frac{x_1}{2t} \right)^2 \left[ \left( \frac{x_1z_1}{2t}\right)^{-\lambda_1-1/2} I_{\lambda_1+1/2}\left( \frac{x_1z_1}{2t}\right) \right. \right. \\
        & + \left. \left( \frac{x_1z_1}{2t}\right)^2 \left( \frac{x_1z_1}{2t}\right)^{-\lambda_1-3/2} I_{\lambda_1+3/2}\left( \frac{x_1z_1}{2t}\right)
         - \frac{z_1^2}{2t} \left( \frac{x_1z_1}{2t}\right)^{-\lambda_1-1/2} I_{\lambda_1+1/2}\left( \frac{x_1z_1}{2t}\right) \right] \\
        &  -  \frac{1}{2t} \left( \frac{x_1z_1}{2t}\right)^{-\lambda_1+1/2} I_{\lambda_1-1/2}\left( \frac{x_1z_1}{2t}\right)\\
        &  - \left. \frac{z_1}{2t} \left[ \frac{x_1^2z_1}{4t^2} \left( \frac{x_1z_1}{2t}\right)^{-\lambda_1-1/2} I_{\lambda_1+1/2}\left( \frac{x_1z_1}{2t}\right)
         - \frac{z_1}{2t} \left( \frac{x_1z_1}{2t}\right)^{-\lambda_1+1/2} I_{\lambda_1-1/2}\left( \frac{x_1z_1}{2t}\right)\right]  \right\}, \ t,\,z_1 >0.
     \end{align*}

     Then, \eqref{I1} implies that
     \begin{align*}
       \Big\lvert \frac{\partial^2}{\partial z_1^2} & W_t^{\lambda_1}(x_1,z_1) \Big\rvert \leq C \frac{e^{-(x_1^2 + z_1^2)/4t}}{t^{\lambda_1 + 1/2}} \\
       & \times
         \Big(\dfrac{x_1^2}{t^2} \left[ 1 + \left( \dfrac{x_1z_1}{t}\right)^2 + \dfrac{z_1^2}{t}\right] +\frac{1}{t}+\frac{z_1}{t}\Big(\frac{x_1^2z_1}{t^2}+\frac{z_1}{t}\Big)\Big)\le \frac{C}{t^{\lambda_1+3/2}},\quad t \geq 1, \ z_1 \in K.
     \end{align*}
     Hence, it obtains
     \begin{align}\label{2.17}
       \big \lvert (x_1-&\sqrt{\varepsilon^2 - \lvert \bar{z} - \bar{x} \rvert^2})^{2\lambda_1} f(x_1-\sqrt{\varepsilon^2 - \lvert \bar{z} - \bar{x} \rvert^2},\bar{z}) \big \rvert \nonumber\\
        & \times \Big \lvert \frac{\partial}{\partial z_1} W_t^{\lambda_1}(x_1,x_1-\sqrt{\varepsilon^2 - \lvert \bar{z} - \bar{x} \rvert^2})
        - \frac{\partial}{\partial z_1} W_t^{\lambda_1}(x_1,x_1+\sqrt{\varepsilon^2 - \lvert \bar{z} - \bar{x} \rvert^2})\Big \rvert \prod_{j=2}^n W_t^{\lambda_j}(x_j,z_j) \nonumber \\
        \leq & C \frac{\varepsilon}{t^{\lambda_1 + 3/2}}
          \prod_{j=2}^n\frac{1}{t^{\lambda_j+1/2}}  , \
         t \geq 1, \ \bar{z} \in K^{n-1} \text{ and } \lvert \bar{z} - \bar{x} \rvert < \varepsilon.
     \end{align}

     From \eqref{2.16} and \eqref{2.17} we deduce that
     \begin{align}\label{2.18}
       \int_1^\infty & \lvert \phi(t) \rvert \int_{\bar{z}\in K^{n-1},\,\lvert \bar{z} - \bar{x} \rvert < \varepsilon }
        \big \lvert H_2(x,x_1-\sqrt{\varepsilon^2 - \lvert \bar{z} - \bar{x} \rvert^2},\bar{z},t)
        - H_2(x,x_1+\sqrt{\varepsilon^2 - \lvert \bar{z} - \bar{x} \rvert^2},\bar{z},t) \big \rvert \prod_{j=2}^n z_j^{2\lambda_j} d\bar{z} dt \nonumber\\
        \leq & C \varepsilon  \int_1^\infty \int_{\bar{z}\in K^{n-1},\,\lvert \bar{z} - \bar{x} \rvert < \varepsilon }
        \frac{1}{t^{\overset{n}{\underset{j=1}{\sum}}(\lambda_j + 1/2)+1}}
         \prod_{j=2}^n z_j^{2\lambda_j} d\bar{z} dt \longrightarrow 0, \,\,\hbox{as}\,\, \varepsilon \rightarrow 0^+.
     \end{align}

     By (\ref{I2}) and \eqref{2.11} it has,

     \begin{align*}
        \big \lvert (x_1-&\sqrt{\varepsilon^2 - \lvert \bar{z} - \bar{x} \rvert^2})^{2\lambda_1} f(x_1-\sqrt{\varepsilon^2 - \lvert \bar{z} - \bar{x} \rvert^2},\bar{z})
        - (x_1+\sqrt{\varepsilon^2 - \lvert \bar{z} - \bar{x} \rvert^2})^{2\lambda_1}f(x_1+\sqrt{\varepsilon^2 - \lvert \bar{z} - \bar{x} \rvert^2},\bar{z}) \big \rvert \\
        & \times \Big \lvert \frac{\partial}{\partial z_1} W_t^{\lambda_1}(x_1,x_1+\sqrt{\varepsilon^2 - \lvert \bar{z} - \bar{x} \rvert^2}) \Big \rvert \prod_{j=2}^n W_t^{\lambda_j}(x_j,z_j)  \\
        \leq & C \sqrt{\varepsilon^2 - \lvert \bar{z} - \bar{x} \rvert^2} \frac{1}{t^{(n+1)/2}} \prod_{j=2}^n (x_jz_j)^{-\lambda_j} e^{-\frac{|\bar{x}-\bar{z}|^2}{4t}-\frac{\varepsilon^2-|\bar{x}-\bar{z}|^2}{8t}} \\
        \leq & C \varepsilon \frac{1}{t^{(n+1)/2}}
        e^{-\frac{\varepsilon^2}{8t}},\,\,\,0<t<1,\,\,\,\bar{z} \in K^{n-1}\,\,\, and\,\,\,\lvert \bar{z} - \bar{x} \rvert <
        \varepsilon.
     \end{align*}

     Then \cite[Lemma 1.1]{ST} allows us to get

     \begin{align*}
     \int_0^1|\phi(t)| &\int_{|\bar{x}-\bar{z}|<\varepsilon}\big \lvert (x_1-\sqrt{\varepsilon^2 - \lvert \bar{z} - \bar{x} \rvert^2})^{2\lambda_1} f(x_1-\sqrt{\varepsilon^2 - \lvert \bar{z} - \bar{x} \rvert^2},\bar{z})
        \\ &- (x_1+\sqrt{\varepsilon^2 - \lvert \bar{z} - \bar{x} \rvert^2})^{2\lambda_1}f(x_1+\sqrt{\varepsilon^2 - \lvert \bar{z} - \bar{x} \rvert^2},\bar{z}) \big \rvert \Big \lvert \frac{\partial}{\partial z_1} W_t^{\lambda_1}(x_1,x_1+\sqrt{\varepsilon^2 - \lvert \bar{z} - \bar{x} \rvert^2}) \Big \rvert \\
        & \times \prod_{j=2}^n W_t^{\lambda_j}(x_j,z_j) \prod_{j=2}^n z_j^{2\lambda_j}d\bar{z} dt \\
        &\leq C\varepsilon \int_{|\bar{x}-\bar{z}|<\varepsilon}\int_0^1\frac{e^{-\varepsilon^2/8t}}{t^{(n+1)/2}}dtd\bar{z}\\
        &\le C\varepsilon \longrightarrow 0, \quad \text{ as } \varepsilon \rightarrow 0^+.\\
     \end{align*}

     Also, we write, for each $0<t<1$, $\bar{z} \in K^{n-1}$ and $\lvert \bar{z} - \bar{x} \rvert < \varepsilon,$
     \begin{align*}
      (x_1-&\sqrt{\varepsilon^2 - \lvert \bar{z} - \bar{x} \rvert^2})^{2\lambda_1} f(x_1-\sqrt{\varepsilon^2 - \lvert \bar{z} - \bar{x} \rvert^2},\bar{z})
      \left(\frac{\partial}{\partial z_1} W_t^{\lambda_1}(x_1,x_1-\sqrt{\varepsilon^2 - \lvert \bar{z} - \bar{x} \rvert^2}) \right. \\
      &-\left.\frac{\partial}{\partial z_1} W_t^{\lambda_1}(x_1,x_1+\sqrt{\varepsilon^2 - \lvert \bar{z} - \bar{x} \rvert^2})\right)\prod_{j=2}^n W_t^{\lambda_j}(x_j,z_j)\\
      =& \left((x_1-\sqrt{\varepsilon^2 - \lvert \bar{z} - \bar{x} \rvert^2})^{2\lambda_1}
      f(x_1-\sqrt{\varepsilon^2 - \lvert \bar{z} - \bar{x} \rvert^2},\bar{z})-x_1^{2\lambda_1}f(x)\right)
      \left(\frac{\partial}{\partial z_1} W_t^{\lambda_1}(x_1,x_1-\sqrt{\varepsilon^2 - \lvert \bar{z} - \bar{x} \rvert^2}) \right.
      \\&-\left.\frac{\partial}{\partial z_1} W_t^{\lambda_1}(x_1,x_1+\sqrt{\varepsilon^2 - \lvert \bar{z} - \bar{x} \rvert^2})\right)
      \prod_{j=2}^n W_t^{\lambda_j}(x_j,z_j)+ x_1^{2\lambda_1}f(x)\left(\frac{\partial}{\partial z_1}
      W_t^{\lambda_1}(x_1,x_1-\sqrt{\varepsilon^2 - \lvert \bar{z} - \bar{x} \rvert^2}) \right. \\
      &-\left.\frac{\partial}{\partial z_1}  W_t^{\lambda_1}(x_1,x_1+\sqrt{\varepsilon^2 - \lvert \bar{z} - \bar{x} \rvert^2})\right)\prod_{j=2}^n W_t^{\lambda_j}(x_j,z_j).
     \end{align*}

     By \eqref{2.11} we have
     $$
     \Big|\frac{\partial}{\partial z_1} W_t^{\lambda_1}(x_1,z_1)\Big|
     \le C\frac{e^{-(x_1-z_1)^2/8t}}{t}, \,\,\,0<t<1,\,\,\,z_1\in K.
     $$

     By proceeding as above we obtain,
      \begin{align*}
     \int_0^1&|\phi(t)| \int_{\bar{z}\in K^{n-1},\,\,|\bar{x}-\bar{z}|<\varepsilon}\big \lvert (x_1-\sqrt{\varepsilon^2 - \lvert \bar{z} - \bar{x} \rvert^2})^{2\lambda_1} f(x_1-\sqrt{\varepsilon^2 - \lvert \bar{z} - \bar{x} \rvert^2},\bar{z})-x_1^{2\lambda_1}f(x)\big \rvert  \\
     &\times \left|\frac{\partial}{\partial z_1} W_t^{\lambda_1}(x_1,x_1-\sqrt{\varepsilon^2 - \lvert \bar{z} - \bar{x} \rvert^2}) -\frac{\partial}{\partial z_1} W_t^{\lambda_1}(x_1,x_1+\sqrt{\varepsilon^2 - \lvert \bar{z} - \bar{x} \rvert^2})\right|\prod_{j=2}^n W_t^{\lambda_j}(x_j,z_j) \prod_{j=2}^n z_j^{2\lambda_j}d\bar{z}\\
         \leq & C\varepsilon \int_{\bar{z}\in K^{n-1},\,\,|\bar{x}-\bar{z}|<\varepsilon}\int_0^1 \frac{e^{-\varepsilon^2/8t}}{t^{(n+1)/2}}dtd\bar{z}\\
          \leq&  C\varepsilon \longrightarrow 0, \quad \text{ as } \varepsilon \rightarrow 0^+.
     \end{align*}

     From the above estimates we conclude that
     \begin{align*}
     \int_0^\infty & \phi(t)\int_{(0,\infty)^n}\Delta_{\lambda_1,z_1}f(z)\Big(\prod_{j=1}^nW_t^{\lambda_j}(x_j,z_j)-\frac{\chi_{(1,\infty)}(t)t^{-\overset{n}{\underset{j=1}{\sum}}(\lambda_j+1/2)}}{\overset{n}{\underset{j=1}{\prod}}2^{2\lambda_j}\Gamma(\lambda_j+1/2)}\Big)\prod_{j=1}^nz_j^{2\lambda_j}dzdt\\
        &=-\lim_{\varepsilon \rightarrow 0^+} \left[ \int_0^\infty \phi(t) \int_{|x-z|>\varepsilon} f(z) \frac{\partial}{\partial t} \Big(W_t^{\lambda_1}(x_1,z_1)\Big) \prod_{j=2}^n W_t^{\lambda_j}(x_j,z_j) \prod_{j=1}^n z_j^{2\lambda_j}dzdt \right. \\
        &+ \left.f(x)x_1^{2\lambda_1} \int_0^1 \phi(t) \int_{|\bar{x}-\bar{z}|<\varepsilon} \int_{x_1-\sqrt{\varepsilon^2 -|\bar{x}-\bar{z}|^2}}^{x_1+\sqrt{\varepsilon^2 -|\bar{x}-\bar{z}|^2}} \frac{\partial^2}{\partial z_1^2}W_t^{\lambda_1}(x_1,z_1)\prod_{j=2}^n W_t^{\lambda_j}(x_j,z_j) dz_1 \prod_{j=2}^n z_j^{2\lambda_j}d\bar{z} dt \right]\\
        =&- \lim_{\varepsilon \rightarrow 0^+} \left[ \int_0^\infty \phi(t) \int_{|x-z|>\varepsilon} f(z) \frac{\partial}{\partial t}\Big(W_t^{\lambda_1}(x_1,z_1)\Big) \prod_{j=2}^n W_t^{\lambda_j}(x_j,z_j) \prod_{j=1}^n z_j^{2\lambda_j}dzdt \right. \\
        &+ \left.f(x)x_1^{2\lambda_1} \int_0^1 \phi(t) \int_{|x-z|<\varepsilon}  \frac{\partial^2}{\partial z_1^2}W_t^{\lambda_1}(x_1,z_1)\prod_{j=2}^n W_t^{\lambda_j}(x_j,z_j) \prod_{j=2}^n z_j^{2\lambda_j}dz dt \right].
     \end{align*}

     We now denote by $\mathbb{W}_t(u,v)$ the classical heat kernel, that is,
     $$
     \mathbb{W}_t(u,v)=\frac{1}{2\sqrt{\pi t}}e^{-|u-v|^2/4t},\,\,\,u,v\in \mathbb{R},\,\,t>0.
     $$
     According to \eqref{I3} we get
     \begin{align*}
        \frac{\partial^2}{\partial z_1^2}&W_t^{\lambda_1}(x_1,z_1) = \frac{\partial^2}{\partial z_1^2} \left[ \frac{\sqrt{2\pi}}{(2t)^{\lambda_1}} \left( \frac{x_1z_1}{2t}\right)^{-\lambda_1+1/2}I_{\lambda_1-1/2}\left( \frac{x_1z_1}{2t}\right)e^{-x_1z_1/2t}\mathbb{W}_t(x_1,z_1)\right]\\
        =& \frac{\sqrt{2\pi}}{(2t)^{\lambda_1}} \left\{\frac{\partial^2}{\partial z_1^2} \left[\left( \frac{x_1z_1}{2t}\right)^{-\lambda_1+1/2}I_{\lambda_1-1/2}\left( \frac{x_1z_1}{2t}\right)e^{-x_1z_1/2t}\right]\mathbb{W}_t(x_1,z_1)\right.\\
        & +2 \frac{\partial}{\partial z_1} \left[\left( \frac{x_1z_1}{2t}\right)^{-\lambda_1+1/2}I_{\lambda_1-1/2}\left( \frac{x_1z_1}{2t}\right)e^{-x_1z_1/2t}\right]\frac{\partial}{\partial z_1}\mathbb{W}_t(x_1,z_1)\\
        & + \left. \left( \frac{x_1z_1}{2t}\right)^{-\lambda_1+1/2}I_{\lambda_1-1/2}\left( \frac{x_1z_1}{2t}\right)e^{-x_1z_1/2t}\frac{\partial^2}{\partial z_1^2}\mathbb{W}_t(x_1,z_1) \right\}\\
        =& \frac{\sqrt{2\pi}}{(2t)^{\lambda_1}}e^{-{x_1z_1/2t}}\left(\frac{x_1z_1}{2t}\right)^{-\lambda_1}\left\{ \left[ \left( \frac{x_1z_1}{2t}\right)^{1/2} I_{\lambda_1+3/2}\left( \frac{x_1z_1}{2t}\right) \right. \right.\\
        &+ \left( \frac{x_1z_1}{2t}\right)^{1/2} I_{\lambda_1+1/2}\left( \frac{x_1z_1}{2t}\right) \frac{2t}{x_1z_1} - 2\left( \frac{x_1z_1}{2t}\right)^{1/2} I_{\lambda_1+1/2}\left( \frac{x_1z_1}{2t}\right)\\
        &+ \left.\left( \frac{x_1z_1}{2t}\right)^{1/2} I_{\lambda_1-1/2}\left( \frac{x_1z_1}{2t}\right) \right]\mathbb{W}_t(x_1,z_1) \left( \frac{x_1}{2t}\right)^2\\
        &+\frac{x_1}{t}\left[ \left(\frac{x_1z_1}{2t}\right)^{1/2} I_{\lambda_1+1/2}\left( \frac{x_1z_1}{2t}\right) - \left(\frac{x_1z_1}{2t}\right)^{1/2} I_{\lambda_1-1/2}\left( \frac{x_1z_1}{2t}\right)\right] \frac{\partial}{\partial z_1}\mathbb{W}_t(x_1,z_1) \\
        &+ \left. \left[\left(\frac{x_1z_1}{2t}\right)^{1/2} I_{\lambda_1-1/2}\left( \frac{x_1z_1}{2t}\right)\right]\frac{\partial^2}{\partial z_1^2} \mathbb{W}_t(x_1,z_1) \right\}, \quad t,z_1\in (0,\infty).
     \end{align*}

     By using \eqref{I2} we obtain for every $0<t<1$, and $z_1\in K$,
     \begin{align}\label{F1}
       \bullet\,\,\, \left| \frac{\sqrt{2\pi}}{(2t)^{\lambda_1}}\left( \frac{x_1z_1}{2t}\right)^{-\lambda_1+1/2}\right. & \left.I_{\lambda_1-1/2}\left( \frac{x_1z_1}{2t}\right)e^{-x_1z_1/2t}\frac{\partial^2}{\partial z_1^2}\mathbb{W}_t(x_1,z_1)- (x_1z_1)^{-\lambda_1}\frac{\partial^2}{\partial z_1^2} \mathbb{W}_t(x_1,z_1) \right| \nonumber\\
        \leq & Ct\left| \frac{\partial^2}{\partial z_1^2} \mathbb{W}_t(x_1,z_1)\right|,
     \end{align}

     \begin{align}\label{F2}
        \bullet \,\,\,\left| \frac{\sqrt{2\pi}}{(2t)^{\lambda_1}}\right. & \left( \frac{x_1z_1}{2t}\right)^{-\lambda_1} \frac{x_1}{2t}
        \frac{\partial}{\partial z_1}\mathbb{W}_t(x_1,z_1)e^{-x_1z_1/2t}\left[\left( \frac{x_1z_1}{2t}\right)^{1/2}I_{\lambda_1+1/2}
        \left( \frac{x_1z_1}{2t}\right)\right.\nonumber\\
        &- \left.\left.\left( \frac{x_1z_1}{2t}\right)^{1/2}I_{\lambda_1-1/2}
        \left( \frac{x_1z_1}{2t}\right) \right]\right| \leq  C\left| \frac{\partial}{\partial z_1} \mathbb{W}_t(x_1,z_1)\right|;
     \end{align}

     and

     \begin{align}\label{F3}
     \bullet\,\,\,&\left| \frac{\sqrt{2\pi}}{(2t)^{\lambda_1}} e^{-x_1z_1/2t} \left( \frac{x_1z_1}{2t}\right)^{-\lambda_1} \mathbb{W}_t(x_1,z_1) \left( \frac{x_1}{2t}\right)^2 \left(
     \left( \frac{x_1z_1}{2t}\right)^{1/2} I_{\lambda_1+3/2}\left( \frac{x_1z_1}{2t}\right) \right.\right.\nonumber \\
        &+ \left( \frac{x_1z_1}{2t}\right)^{1/2} I_{\lambda_1+1/2}\left( \frac{x_1z_1}{2t}\right) \frac{2t}{x_1z_1} - 2\left( \frac{x_1z_1}{2t}\right)^{1/2} I_{\lambda_1+1/2}\left( \frac{x_1z_1}{2t}\right)\nonumber\\
        &+ \left.\left.\left( \frac{x_1z_1}{2t}\right)^{1/2} I_{\lambda_1-1/2}\left( \frac{x_1z_1}{2t}\right)\right)\right|\nonumber\\
        &\leq C  \mathbb{W}_t(x_1,z_1).
     \end{align}

     Hence, we get for every $0<t<1$, and $z_1\in K$,
     $$ \left| \frac{\partial^2}{\partial z_1^2}W_t^{\lambda_1 }(x_1,z_1) - (x_1z_1)^{-\lambda_1}\frac{\partial^2}{\partial z_1^2} \mathbb{W}_t(x_1,z_1)\right|
     \leq C\frac{e^{-(x_1-z_1)^2/8t}}{t}.$$

     Then, by involving again \eqref{I2} and \cite[Lemma 1.1]{ST} we have that
     \begin{align*}
     \int_0^1& |\phi(t)| \int_{|x-z|<\varepsilon} \left| \frac{\partial^2}{\partial z_1^2}W_t^{\lambda_1 }(x_1,z_1) - (x_1z_1)^{-\lambda_1}\frac{\partial^2}{\partial z_1^2} \mathbb{W}_t(x_1,z_1)\right|
      \prod_{j=2}^n W_t^{\lambda_j}(x_j,z_j) \prod_{j=2}^n z_j^{2\lambda_j}dzdt \\
     \leq & C \int_{|x-z|<\varepsilon} \int_0^1  \frac{e^{-\lvert x-z \rvert ^2/8t}}{t^{(n+1)/2}} \prod_{j=2}^n z_j^{2\lambda_j} dt dz\\
     \leq & C\int_{|x-z|<\varepsilon} \frac{dz}{|x-z|^{n-1}}\le C\varepsilon
      \longrightarrow 0, \text{ as } \varepsilon \rightarrow 0^+.
     \end{align*}

     Also, \eqref{I2} leads to, for every $0<t<1$, and $z\in K^n$,
    \begin{align*}
       \Big\lvert \prod_{j=1}^{i-1} & (x_jz_j)^{-\lambda_j} \frac{\partial^2}{\partial z_1^2} \mathbb{W}_t(x_1,z_1) \prod_{j=2}^{i-1} \mathbb{W}_t(x_j,z_j)
       \left( W_t^{\lambda_i}(x_i,z_i) - (x_iz_i)^{-\lambda_i}\mathbb{W}_t(x_i,z_i) \right) \\
       & \times  \prod_{j=i+1}^{n} W_t^{\lambda_j}(x_j,z_j) \Big \rvert \leq C \frac{e^{- \lvert x-z \rvert^2/8t}}{t^{n/2}}, \quad i=2, \dots, n.
     \end{align*}
     Then, we have that for every $i=2, \dots, n,$
     \begin{align*}
       \int_0^1 \lvert \phi(t)& \rvert \int_{\lvert x -z \rvert < \varepsilon}
       \Big\lvert \prod_{j=1}^{i-1}  (x_jz_j)^{-\lambda_j} \frac{\partial^2}{\partial z_1^2} \mathbb{W}_t(x_1,z_1) \prod_{j=2}^{i-1} \mathbb{W}_t(x_j,z_j)
       \left( W_t^{\lambda_i}(x_i,z_i) - (x_iz_i)^{-\lambda_i}W_t(x_i,z_i) \right) \\
       & \times  \prod_{j=i+1}^{n} W_t^{\lambda_j}(x_j,z_j) \Big \rvert \prod_{j=2}^n z_j^{2\lambda_j} dz dt \\
        \leq & C  \int_{\lvert x -z \rvert < \varepsilon} \int_0^1  \frac{e^{- \lvert x-z \rvert^2/8t}}{t^{n/2}} dt dz \\
        \leq & C  \int_{\lvert x -z \rvert < \varepsilon} \frac{dz}{\lvert x - z \rvert^{n-2}} \longrightarrow 0, \quad \text{as } \varepsilon \rightarrow 0^+,
     \end{align*}
     provided that $n>2$. When $n=2$ we proceed in a similar way.

     Hence, we conclude that
     \begin{align*}
     \int_0^\infty& \phi(t)\int_{(0,\infty)^n}\Delta_{\lambda_1,z_1}f(z)\Big(\prod_{j=1}^nW_t^{\lambda_j}(x_j,z_j)-\frac{\chi_{(1,\infty)}(t)t^{-\overset{n}{\underset{j=1}{\sum}}(\lambda_j+1/2)}}{\overset{n}{\underset{j=1}{\prod}}2^{2\lambda_j}\Gamma(\lambda_j+1/2)}\Big)\prod_{j=1}^nz_j^{2\lambda_j}dz_jdt\\
          &= -\lim_{\varepsilon \rightarrow 0^+} \left[ \int_0^\infty \phi(t) \int_{|x-z|>\varepsilon} f(z) \frac{\partial}{\partial t} \Big(W_t^{\lambda_1}(x_1,z_1)\Big) \prod_{j=2}^n W_t^{\lambda_j}(x_j,z_j)\prod_{j=1}^n z_j^{2\lambda_j}dzdt \right. \\
         &+ \left.f(x)x_1^{2\lambda_1} \int_0^1 \phi(t) \int_{|x-z|<\varepsilon}  \prod_{j=1}^n (x_jz_j)^{-\lambda_j}
            \frac{\partial^2}{\partial z_1^2}\mathbb{W}_t(x_1,z_1)\prod_{j=2}^n \mathbb{W}_t(x_j,z_j) \prod_{j=2}^n z_j^{2\lambda_j}dz dt \right].
     \end{align*}

     On the other hand, the mean value theorem allows us to write, for every $\alpha \in \mathbb{R}$,
     and $j=1, \dots, n$,
     \begin{align*}
       \lvert z_j^\alpha - x_j^\alpha \rvert & \ \left \lvert \frac{\partial^2}{\partial z_1^2}\mathbb{W}_t(x_1,z_1)\prod_{j=2}^n \mathbb{W}_t(x_j,z_j) \right \rvert
           \leq C \lvert z_j - x_j \rvert \frac{e^{-\lvert x- z \rvert^2/8t}}{t^{n/2+1}} \\
         & \leq C \frac{e^{-\lvert x- z \rvert^2/16t}}{t^{(n+1)/2}},\,\,\,t>0\,\,and\,\,z\in K^n.
     \end{align*}

     Then, by proceeding as above we obtain
     \begin{align} \label{2.21}
        \int_0^\infty& \phi(t)\int_{(0,\infty)^n}\Delta_{\lambda_1,z_1}f(z)\Big(\prod_{j=1}^nW_t^{\lambda_j}(x_j,z_j)-\frac{\chi_{(1,\infty)}(t)t^{-\overset{n}{\underset{j=1}{\sum}}(\lambda_j+1/2)}}{\overset{n}{\underset{j=1}{\prod}}2^{2\lambda_j}\Gamma(\lambda_j+1/2)}\Big)\prod_{j=1}^nz_j^{2\lambda_j}dz_jdt\nonumber\\
          &= -\lim_{\varepsilon \rightarrow 0^+} \left[ \int_0^\infty \phi(t) \int_{|x-z|>\varepsilon} f(z) \frac{\partial}{\partial t}\Big(W_t^{\lambda_1}(x_1,z_1)\Big)\prod_{j=2}^n W_t^{\lambda_j}(x_j,z_j) \prod_{j=1}^n z_j^{2\lambda_j}dzdt \right. \nonumber\\
         &+ \left.f(x) \int_0^1 \phi(t) \int_{|x-z|<\varepsilon}
            \frac{\partial^2}{\partial z_1^2} \frac{e^{- \lvert x-z \rvert^2/4t}}{(2 \sqrt{\pi t})^n} dz dt \right]\nonumber\\
         = & -\lim_{\varepsilon \rightarrow 0^+} \left[ \int_0^\infty \phi(t) \int_{|x-z|>\varepsilon} f(z) \frac{\partial}{\partial t}\Big(W_t^{\lambda_1}(x_1,z_1)\Big) \prod_{j=2}^n W_t^{\lambda_j}(x_j,z_j)\prod_{j=1}^n z_j^{2\lambda_j}dzdt \right. \nonumber\\
         &+ \left.f(x) \int_0^1 \phi(t) \int_{|y|<\varepsilon}
            \frac{\partial^2}{\partial y_1^2} \frac{e^{- \lvert y \rvert^2/4t}}{(2 \sqrt{\pi t})^n} dy dt \right].
     \end{align}

     Also, we have that, for every $i=2,\ldots,n$,
     \begin{align} \label{YZ}
     \int_0^\infty& \phi(t)\int_{(0,\infty)^n}\Delta_{\lambda_i,z_i}f(z)\Big(\prod_{j=1}^nW_t^{\lambda_j}(x_j,z_j)-\frac{\chi_{(1,\infty)}(t)t^{-\overset{n}{\underset{j=1}{\sum}}(\lambda_j+1/2)}}{\overset{n}{\underset{j=1}{\prod}}2^{2\lambda_j}\Gamma(\lambda_j+1/2)}\Big)\prod_{j=1}^nz_j^{2\lambda_j}dz_jdt\nonumber\\
          = & -\lim_{\varepsilon \rightarrow 0^+} \left[ \int_0^\infty \phi(t) \int_{|x-z|>\varepsilon} f(z) \frac{\partial}{\partial t}\Big(W_t^{\lambda_i}(x_i,z_i)\Big) \prod_{j=1,\,j\neq i}^n W_t^{\lambda_j}(x_j,z_j)\prod_{j=1}^n z_j^{2\lambda_j}dzdt \right. \\
         &+ \left.f(x) \int_0^1 \phi(t) \int_{|y|<\varepsilon}
            \frac{\partial^2}{\partial y_i^2} \frac{e^{- \lvert y \rvert^2/4t}}{(2 \sqrt{\pi t})^n} dy dt \right]. \nonumber
     \end{align}

     Then, by \eqref{2.21} and \eqref{YZ}, it follows that
     \begin{align}\label{XYZ}
     T&_{\lambda_1,\ldots,\lambda_n}^mf(x)=-\lim_{\varepsilon \rightarrow 0^+} \left[ \int_0^\infty \phi(t) \int_{|x-z|>\varepsilon} f(z) \frac{\partial}{\partial t}\Big(\prod_{j=1}^n W_t^{\lambda_j}(x_j,z_j)\Big)\prod_{j=1}^n z_j^{2\lambda_j}dzdt \right. \\
         &+ \left.nf(x) \int_0^1 \phi(t) \int_{|y|<\varepsilon}
            \frac{\partial^2}{\partial y_1^2} \frac{e^{- \lvert y \rvert^2/4t}}{(2 \sqrt{\pi t})^n} dy dt \right]. \nonumber
     \end{align}

     We define
     $$\alpha(\varepsilon) = \int_0^1 \phi(t) \int_{|y|<\varepsilon} \frac{\partial^2}{\partial y_1^2} \frac{e^{- \lvert y \rvert^2/4t}}{(2 \sqrt{\pi t})^n} dy dt, \quad \varepsilon>0.$$

     We can write
     \begin{align*}
       \alpha(\varepsilon) & = \int_0^1 \phi(t) \int_{|\bar{y}|<\varepsilon} \int_{-\sqrt{\varepsilon^2-\lvert \bar{y} \rvert^2}}^{\sqrt{\varepsilon^2-\lvert \bar{y} \rvert^2}}
                               \frac{\partial^2}{\partial y_1^2} \frac{e^{- \lvert y \rvert^2/4t}}{(2 \sqrt{\pi t})^n} dy_1 d\bar{y} dt \\
                           & = \int_0^1 \phi(t) \int_{|\bar{y}|<\varepsilon}
                               \left[ \frac{\partial}{\partial y_1} \left( \frac{e^{- (y_1^2 + \lvert \bar{y} \rvert^2)/4t}}{(2 \sqrt{\pi t})^n} \right)_{\left\lvert y_1=\sqrt{\varepsilon^2-\lvert \bar{y} \rvert^2} \right. }
                                    - \frac{\partial}{\partial y_1} \left( \frac{e^{- (y_1^2 + \lvert \bar{y} \rvert^2)/4t}}{(2 \sqrt{\pi t})^n} \right)_{\left\lvert y_1=-\sqrt{\varepsilon^2-\lvert \bar{y} \rvert^2} \right. } \right] d\bar{y} dt, \ \varepsilon>0.
     \end{align*}
     Hence, \cite[Lemma 1.1]{ST} leads to
     \begin{align*}
       \lvert \alpha (\varepsilon) \rvert & \leq C \int_0^1 \int_{\lvert \bar{y} \rvert < \varepsilon} \frac{e^{-\varepsilon^2/8t}}{t^{(n+1)/2}} d\bar{y} dt
                                           \leq C  \int_{\lvert \bar{y} \rvert < \varepsilon} \int_0^1 \frac{e^{-\varepsilon^2/8t}}{t^{(n+1)/2}} dt d\bar{y}
                                           \leq \frac{1}{\varepsilon^{n-1}} \int_{\lvert \bar{y} \rvert < \varepsilon} d\bar{y} \leq C, \quad \varepsilon>0.
     \end{align*}

     Suppose now that there exists  $\displaystyle \phi(0^+)=\lim_{t \rightarrow 0^+} \phi(t)$. Then, we have that
     \begin{equation} \label{M1}
       \lim_{\varepsilon \rightarrow 0^+} \int_0^1 \phi(t) \int_{\lvert y \rvert < \varepsilon}
       \frac{\partial^2}{\partial y_1^2} \frac{e^{-\lvert y \rvert^2/4t}}{(2\sqrt{\pi t})^n} dy dt = - M \phi(0^+),
     \end{equation}
     for a certain $M>0$. Indeed, by making changes of variables we obtain
     \begin{align*}
       \int_0^1 \phi(t) \int_{\lvert y \rvert < \varepsilon} \frac{\partial^2}{\partial y_1^2} \frac{e^{-\lvert y \rvert^2/4t}}{(2\sqrt{\pi t})^n} dy dt
        & = \int_0^{1/\varepsilon^2} \phi(s\varepsilon^2)  \int_{\lvert z \rvert < 1} \frac{\partial^2}{\partial z_1^2} \frac{e^{-\lvert z \rvert^2/4s}}{(2\sqrt{\pi s})^n} dz ds\\
        & = \int_0^{1/\varepsilon^2} \phi(s\varepsilon^2)  \int_{\lvert \bar{z} \rvert < 1}
            \left. \frac{\partial}{\partial z_1} \frac{e^{-\lvert z \rvert^2/4s}}{(2\sqrt{\pi s})^n} \right]_{z_1=-\sqrt{1-\lvert \bar{z} \rvert^2}}^{z_1=\sqrt{1-\lvert \bar{z} \rvert^2}} d\bar{z} ds\\
        & = -\int_0^{1/\varepsilon^2} \phi(s\varepsilon^2)  \int_{\lvert \bar{z} \rvert < 1}
            \left. \frac{z_1}{2s} \frac{e^{-\lvert z \rvert^2/4s}}{(2 \sqrt{\pi s})^n} \right]_{z_1=-\sqrt{1-\lvert \bar{z} \rvert^2}}^{z_1=\sqrt{1-\lvert \bar{z} \rvert^2}} d\bar{z} ds\\
        & = - \int_0^{1/\varepsilon^2} \phi(s\varepsilon^2)  \int_{\lvert \bar{z} \rvert < 1}
            \frac{\sqrt{1-\lvert \bar{z} \rvert^2}}{s} \frac{e^{-1/4s}}{(2\sqrt{\pi s})^n} d\bar{z} ds\\
        & = - M \int_0^{1/\varepsilon^2} \phi(s\varepsilon^2) \frac{e^{-1/4s}}{s^{n/2+1}} ds, \quad \varepsilon>0,
     \end{align*}
     where
     $$M=\frac{1}{(2\sqrt{\pi})^n} \int_{\lvert \bar{z} \rvert < 1} \sqrt{1- \lvert \bar{z} \rvert^2} d\bar{z}.$$
     Moreover, denoting by $\chi_{[0,1]}$ the characteristic function of $[0,1]$, we have that
     $$\int_0^{1/\varepsilon^2} \phi(s\varepsilon^2) \frac{e^{-1/4s}}{s^{n/2+1}} ds
     = \int_0^\infty \phi(s \varepsilon^2) \chi_{[0,1]}(s\varepsilon^2)\frac{e^{-1/4s}}{s^{n/2+1}} ds, \quad \varepsilon>0.$$
     Then, by using the dominated convergence theorem, it follows that
     $$\lim_{\varepsilon\to 0^+}\int_0^{1/\varepsilon^2} \phi(s\varepsilon^2) \frac{e^{-1/4s}}{s^{n/2+1}} ds = \phi(0^+) \int_0^\infty  \frac{e^{-1/4s}}{s^{n/2+1}} ds.$$
     Hence, there exists $M>0$ for which \eqref{M1} holds.\\

     Let $\alpha>-1/2$. According to \eqref{I3} it follows that
     \begin{align}\label{Z0}
     \frac{\partial}{\partial t}&W_t^\alpha(u,v)=\frac{e^{-(u^2+v^2)/4t}}{2^{\alpha+1/2}}\Big\{-\frac{\alpha+1/2}{t^{\alpha+3/2}}\Big(\frac{uv}{2t}\Big)^{-\alpha+1/2}I_{\alpha-1/2}\Big(\frac{uv}{2t}\Big)\nonumber\\
     &-\frac{uv}{2t^{\alpha+5/2}}\Big(\frac{uv}{2t}\Big)^{-\alpha+1/2}I_{\alpha+1/2}\Big(\frac{uv}{2t}\Big)+\frac{u^2+v^2}{4t^{\alpha+5/2}}\Big(\frac{uv}{2t}\Big)^{-\alpha+1/2}I_{\alpha-1/2}\Big(\frac{uv}{2t}\Big)\Big\},\,\,\,u,v,t\in (0,\infty).
     \end{align}
     Then, by \eqref{I1} we obtains
     \begin{equation}\label{Z1}
     \Big|\frac{\partial}{\partial t}W_t^\alpha(u,v)\Big|\le C\frac{1}{t^{\alpha+3/2}},\,\,\,\frac{uv}{t}\le 1,\,\,\,u,v,t\in (0,\infty),
     \end{equation}
     and, by \eqref{I2} we have
     \begin{align}\label{ABC}
     \Big|\frac{\partial}{\partial t}&W_t^\alpha(u,v)\Big|\le C\frac{e^{-(u-v)^2/4t}}{t^{3/2}}(uv)^{-\alpha}\Big(\frac{|u-v|^2}{t}+1+\frac{u^2+v^2}{uv}\Big)\nonumber\\
     &\le C\frac{e^{-(u-v)^2/4t}}{t^{3/2}}(uv)^{-\alpha}\Big(1+\frac{u^2+v^2}{uv}\Big),\,\,\,\frac{uv}{t}\ge 1,\,\,\,u,v,t\in (0,\infty).
     \end{align}

          For every $t>0$ and $z \in (0,\infty)^n$,
     \begin{align*}
       \frac{\partial}{\partial t}  \prod_{j=1}^n &W_t^{\lambda_j}(x_j,z_j)
         = \sum_{i=1}^n  \prod_{\tiny{\begin{array}{c} j=1 \\ j \neq i \end{array}}}^n W_t^{\lambda_j}(x_j,z_j)\frac{\partial}{\partial t} W_t^{\lambda_i}(x_i,z_i)\\
         = & \sum_{i=1}^n  \prod_{\tiny{\begin{array}{c} j=1 \\ j \neq i \end{array}}}^n W_t^{\lambda_j}(x_j,z_j)\frac{e^{-(x_i^2+z_i^2)/4t}}{2^{\lambda_i+1/2}}
         \left\{ -\frac{\lambda_i + 1/2}{t^{\lambda_i+3/2}} \left( \frac{x_iz_i}{2t}\right)^{-\lambda_i+1/2}I_{\lambda_i-1/2}\left( \frac{x_iz_i}{2t}\right) \right. \\
         & - \left. \frac{x_i^2z_i^2}{4t^{\lambda_i+7/2}} \left( \frac{x_iz_i}{2t}\right)^{-\lambda_i-1/2}I_{\lambda_i+1/2}\left( \frac{x_iz_i}{2t}\right)
          +  \frac{x_i^2+z_i^2}{4t^{\lambda_i+5/2}} \left( \frac{x_iz_i}{2t}\right)^{-\lambda_i+1/2}I_{\lambda_i-1/2}\left( \frac{x_iz_i}{2t}\right) \right\}.
     \end{align*}

     Hence, by using \eqref{I1}, \eqref{I2}, \eqref{Z1}, and \ref{ABC}, we get, for every $z\in K^n$,
     \begin{align*}
     \Big|\frac{\partial}{\partial t}&\prod_{j=1}^n W_t^{\lambda_j}(x_j,z_j)\Big|\le \sum_{i=1}^n\Big|\frac{\partial}{\partial t}W_t^{\lambda_i}(x_i,z_i)\Big|\prod_{j=1,\,j\neq i}^n W_t^{\lambda_j}(x_j,z_j)\\
     &\le C\left\{\begin{array}{lcl}
           t^{-\overset{n}{\underset{j=1}{\sum}}(\lambda_j+1/2)+1} &,& t>1\\
         &&\\
          e^{-|x-z|^2/8t}t^{-{(n+2)/2}}   &,& 0<t\le 1.
       \end{array}  \right. \\
       \end{align*}

     Hence, since $f \in C_c^\infty((0,\infty)^n)$, it follows that, for every $\varepsilon >0$
    $$
    \int_0^\infty |\phi(t)|\int_{(0,\infty)^n,\,|x-z|>\varepsilon}|f(z)|\Big|\frac{\partial}{\partial t}\Big(\prod_{j=1}^n W_t^{\lambda_j}(x_j,z_j)\Big|\prod_{j=1}^nz_j^{2\lambda_j}dzdt<\infty.
    $$

The we can interchange the order of integration on the integrals in (\ref{XYZ}) and by using (\ref{M1}) we conclude that
     $$T_{\lambda_1, \dots, \lambda_n}^m f(x)  = -
         \lim_{\varepsilon \rightarrow 0^+}\int_{\tiny{\begin{array}{l}(0,\infty)^n \\ \lvert z-x \rvert > \varepsilon \end{array}}} f(z) \int_0^\infty \phi(t)
          \frac{\partial}{\partial t} \prod_{j=1}^n W_t^{\lambda_j}(x_j,z_j)dt  \prod_{j=1}^n z_j^{2\lambda_j} dz  +C \phi(0^+)f(x),$$
          for a certain $C>0$.
    \end{proof}

   We now prove the $L^p$-boundedness properties of the maximal operator associated with the principal value
   integral that appears in Proposition~\ref{integral representation}.

    \begin{Prop}\label{maximal}
     Let $\lambda_j >-1/2$, $j=1, \dots, n$. The maximal operator $T_{\lambda_1, \dots, \lambda_n}^{m,*}$ defined by
     $$T_{\lambda_1, \dots, \lambda_n}^{m,*}(f)(x)=\sup_{\varepsilon>0}
       \left \lvert \int_{(0,\infty)^n, \ \lvert x-y \rvert>\varepsilon}
       f(y) K_{\lambda_1, \dots, \lambda_n}^\phi (x,y) \prod_{j=1}^n y_j^{2\lambda_j} dy \right \rvert, \ x \in (0,\infty)^n,$$
     where
     $$K_{\lambda_1, \dots, \lambda_n}^\phi (x,y)= \int_0^\infty \phi(t) \frac{\partial}{\partial t} \prod_{j=1}^n W_t^{\lambda_j}(x_j,y_j) dt, \quad x,y \in (0,\infty)^n,$$
     is bounded from $L^p((0,\infty)^n, \overset{n}{\underset{j=1}{\prod}} x_j^{2\lambda_j }dx)$ into itself,
     for every $1<p<\infty,$ and from $L^1((0,\infty)^n,$ $ \overset{n}{\underset{j=1}{\prod}} x_j^{2\lambda_j }dx)$ into
     $L^{1,\infty}((0,\infty)^n, \overset{n}{\underset{j=1}{\prod}} x_j^{2\lambda_j }dx).$
   \end{Prop}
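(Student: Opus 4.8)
The plan is to split the region of integration into the local region $\Omega$ and its complement, treating each separately: the global part will be dominated by a positive integral operator, while the local part will be reduced to the classical Calder\'on--Zygmund theory for the Euclidean multiplier $T^m$. Write $d\mu(y)=\prod_{j=1}^n y_j^{2\lambda_j}\,dy$ and $\Omega_x=\{y\in(0,\infty)^n: x_j/2<y_j<2x_j,\ j=1,\dots,n\}$. For each $x$ and $\varepsilon>0$ I would bound
\begin{equation*}
\Big| \int_{(0,\infty)^n,\, |x-y|>\varepsilon} f(y) K_{\lambda_1, \dots, \lambda_n}^\phi(x,y)\, d\mu(y) \Big|
\le \Big| \int_{\Omega_x,\, |x-y|>\varepsilon} f(y) K_{\lambda_1, \dots, \lambda_n}^\phi(x,y)\, d\mu(y) \Big|
+ \int_{\Omega_x^c} |f(y)|\, \big| K_{\lambda_1, \dots, \lambda_n}^\phi(x,y) \big|\, d\mu(y).
\end{equation*}
Taking the supremum over $\varepsilon>0$ gives $T_{\lambda_1,\dots,\lambda_n}^{m,*}(f)\le T^{*}_{loc}(f)+G(|f|)$, where $T^{*}_{loc}$ is the maximal truncation of the operator with kernel $K_{\lambda_1,\dots,\lambda_n}^\phi\chi_{\Omega}$ and $G$ is the \emph{positive} operator with kernel $|K_{\lambda_1,\dots,\lambda_n}^\phi|\chi_{\Omega^c}$. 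It then suffices to prove that each of $T^{*}_{loc}$ and $G$ is bounded on $L^p(d\mu)$, $1<p<\infty$, and from $L^1(d\mu)$ into $L^{1,\infty}(d\mu)$.

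For the global operator $G$ the supremum has already been discarded, so only size estimates of the kernel are needed. I would insert the explicit formula \eqref{Z0} for $\partial_t W_t^\lambda$ into the definition of $K_{\lambda_1,\dots,\lambda_n}^\phi$, split the $t$-integral according to whether $x_jy_j/t\le 1$ or $x_jy_j/t\ge 1$ (using \eqref{Z1} in the first range and \eqref{ABC} in the second), and use $\|\phi\|_\infty<\infty$. This produces a pointwise bound for $|K_{\lambda_1,\dots,\lambda_n}^\phi(x,y)|$ on $\Omega^c$ exhibiting exponential/polynomial decay coming from the coordinate(s) that are far apart. With these bounds, the boundedness of $G$ on $L^p(d\mu)$ for $1<p<\infty$ follows from a Schur-type estimate exploiting the product structure of the kernel and the doubling property of $\mu$, while the endpoint $L^1(d\mu)\to L^{1,\infty}(d\mu)$ follows by the standard global-region argument, controlling the kernel by the reciprocal of the $\mu$-measure of the ball joining $x$ and $y$ together with the extra decay just mentioned.

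For the local operator I would compare $K_{\lambda_1,\dots,\lambda_n}^\phi$ on $\Omega$ with $\prod_{j=1}^n(x_jy_j)^{-\lambda_j}K^m(x,y)$, where $K^m(x,y)=\int_0^\infty\phi(t)\,\partial_t\big((2\sqrt{\pi t})^{-n}e^{-|x-y|^2/4t}\big)\,dt$ is the kernel of the Euclidean Laplace-transform-type multiplier $T^m$. Setting $R(x,y):=K_{\lambda_1,\dots,\lambda_n}^\phi(x,y)-\prod_{j=1}^n(x_jy_j)^{-\lambda_j}K^m(x,y)$, the asymptotics \eqref{I1}, \eqref{I2} and \eqref{I3}---used exactly as in the estimate for $\partial_{z_1}^2 W_t^{\lambda_1}-(x_1z_1)^{-\lambda_1}\partial_{z_1}^2\mathbb{W}_t$ in the proof of Proposition~\ref{integral representation}---show that on $\Omega$ the remainder $R$ is strictly less singular, satisfying a locally integrable, radially decreasing bound of the form $|R(x,y)|\le C\prod_{j=1}^n(x_jy_j)^{-\lambda_j}|x-y|^{-n+1}$. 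Hence
\begin{equation*}
T^{*}_{loc}(f)(x) \le \sup_{\varepsilon>0} \Big| \int_{\Omega_x,\, |x-y|>\varepsilon} f(y) \prod_{j=1}^n (x_jy_j)^{-\lambda_j} K^m(x,y)\, d\mu(y) \Big| + \int_{\Omega_x} |f(y)|\, |R(x,y)|\, d\mu(y),
\end{equation*}
where the second term, by the remainder bound, is dominated pointwise by the Hardy--Littlewood maximal operator associated with the doubling measure $\mu$, which is bounded on $L^p(d\mu)$ and of weak type $(1,1)$. For the first term I would invoke that, since $m$ is of Laplace transform type, $T^m$ is a Calder\'on--Zygmund operator on $\mathbb{R}^n$, so its maximal truncation is bounded on $L^p(dx)$ and of weak type $(1,1)$; because $x_j\sim y_j$ on $\Omega$, one has $\prod_{j=1}^n (x_jy_j)^{-\lambda_j}\prod_{j=1}^n y_j^{2\lambda_j}=\prod_{j=1}^n x_j^{-\lambda_j}y_j^{\lambda_j}\sim 1$, so $d\mu$ is comparable on $\Omega_x$ to Lebesgue measure up to the slowly varying factor $\prod_{j=1}^n x_j^{2\lambda_j}$, and the classical estimates transfer to the required bounds on $L^p(d\mu)$ and $L^{1,\infty}(d\mu)$.

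I expect the main obstacle to be the local analysis, and within it two points in particular: first, establishing the remainder estimate for $R(x,y)$ on $\Omega$ uniformly, via a careful splitting of the $t$-integral and the fine Bessel asymptotics \eqref{I1}--\eqref{I3}; and second, legitimately matching the truncation $|x-y|>\varepsilon$ with the classical Euclidean maximal truncated singular integral so that the comparison survives the supremum over $\varepsilon$. The global and remainder terms, by contrast, are comparatively routine once the pointwise kernel bounds are in hand, since there the absolute value removes the truncation and the supremum plays no role.
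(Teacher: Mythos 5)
Your decomposition is exactly the one the paper uses in \eqref{2.22}: a positive ``global'' operator with kernel $|K^\phi_{\lambda_1,\dots,\lambda_n}|$ off the local region (the paper's $\mathcal{G}^\phi_{\lambda_1,\dots,\lambda_n}$), a positive ``remainder'' operator on the local region comparing $K^\phi_{\lambda_1,\dots,\lambda_n}$ with $\prod_j(x_jy_j)^{-\lambda_j}H^\phi(x,y)$ (the paper's $\mathcal{L}_{\lambda_1,\dots,\lambda_n}$), and the maximal truncations of the Euclidean kernel on the local region (the paper's $T^{m,*}_{loc,\lambda_1,\dots,\lambda_n}$). So in outline you and the paper agree. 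The problem is that the two points you yourself flag as ``the main obstacle'' are precisely where the paper's proof has its substance, and your proposal supplies no mechanism for either. First, the truncation matching: the restriction $y\in L(x)$ depends on $x$, so the maximal truncations of the kernel $\prod_j(x_jy_j)^{-\lambda_j}H^\phi(x,y)\chi_{L(x)}(y)$ are \emph{not} maximal truncations of the classical operator $T^{m,*}$ applied to any fixed function, and ``$x_j\sim y_j$ on $\Omega$'' does not bridge this. The paper's missing idea is the dyadic decomposition: for $x$ in the dyadic cube $Q_j$ one has $L(x)\subset\tilde{Q}_j$, so the truncated local integral equals the truncated integral over the \emph{fixed} set $\tilde{Q}_j$ (hence a genuine classical maximal truncation applied to $f\chi_{\tilde{Q}_j}$, transferable because the weight $\prod_l y_l^{2\lambda_l}$ is comparable to a constant on $\tilde Q_j$) minus the integral over $\tilde{Q}_j\setminus L(x)$, on which the kernel is non-singular because some coordinate satisfies $|x_i-y_i|\geq x_i/2\sim 2^{j_i}$; that second piece is then dominated, via the bound $|H^\phi(x,y)|\leq C|x-y|^{-n}$ of \eqref{2.24}, by $(n-1)$-dimensional Hardy--Littlewood maximal functions $M_{n-1}(f_{j,i})$, and the estimates are summed over $j\in\mathbb{Z}^n$ using the bounded overlap of the $\tilde{Q}_j$. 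Without this (or a substitute), the core of your local analysis is missing.

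Second, the remainder estimate: you assert $|R(x,y)|\le C\prod_j(x_jy_j)^{-\lambda_j}|x-y|^{-n+1}$ on $\Omega$ and then dominate by the maximal function of $\mu$, but this bound is never derived, and it is not what the Bessel asymptotics give directly; the gain from \eqref{2.27} and \eqref{2.30} appears factor by factor, e.g.\ one coordinate contributes $(x_iy_i)^{-\lambda_i-1}\sqrt{t}\,e^{-(x_i-y_i)^2/4t}$ while the others keep full Gaussian factors, and the $t$-integration does not collapse to a single radial majorant uniformly when the coordinates $x_j$ have very different sizes. The paper never proves such a radial bound; instead it keeps the product structure, controls $\mathcal{L}_{\lambda_1,\dots,\lambda_n}$ by operators of type $\Lambda^l_{\lambda_1,\dots,\lambda_n}$ built from one-dimensional Gaussian maximal operators $\Omega_{\beta_1,\dots,\beta_k}$ and averaging operators $Z_{\beta_1,\dots,\beta_k}$, and glues the one-dimensional bounds together with \cite[Proposition 1]{Din}. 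The same caveat applies to your global part: a kernel bound by the reciprocal of $\mu(B(x,|x-y|))$ alone does not yield weak type $(1,1)$ for a positive integral operator (convolution with $|x|^{-n}$ already fails it), so the ``standard global-region argument'' you invoke does not exist as such; the paper's treatment of the mixed local/global pieces $S^{k,l,i}_{\lambda_1,\dots,\lambda_n}$ requires Hardy-type operators $H_{\beta_1,\dots,\beta_k}$, the operators $Y^s_{\beta_1,\dots,\beta_r}$ handled as in Case 3 of \cite{NS1}, and again \cite[Proposition 1]{Din}. In short: right skeleton, but the three load-bearing arguments are absent.
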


   \begin{proof}
   In order to established the $L^p$-boundedness properties for the maximal operator $T_{\lambda_1,\ldots,\lambda_n}^{m,*}$ we consider the operator
   $$ T^{m,*}_{loc,\lambda_1,\ldots,\lambda_n}(f)(x)= \sup_{\varepsilon>0} \left| \int_{L(x), |x-y|>\varepsilon} f(y) \prod_{j=1}^n (x_jy_j)^{-\lambda_j}H^\phi(x,y) \prod_{j=1}^n y_j^{2\lambda_j}dy\right|, \ x\in (0,\infty)^n,$$
   where, for every $x=(x_1,\ldots,x_n)\in (0,\infty)^n,$
   $$ L(x)=\{y= (y_1,\ldots, y_n)\in(0,\infty)^n\ :\ x_j/2<y_j<2x_j, \ j=1,\ldots,n\}, $$
   and
   $$ H^\phi(x,y)= \int_0^\infty \phi(t) \frac{\partial}{\partial t} \left[\frac{e^{-|x-y|^2/4t}}{(4\pi t)^{n/2}} \right]dt, \quad x,y\in(0,\infty)^n.$$
   We have that
   \begin{align}\label{2.22}
      T_{\lambda_1,\ldots, \lambda_n}^{m,*} (f)(x) \leq &\left|T_{\lambda_1,\ldots, \lambda_n}^{m,*} (f)(x)-T_{loc,\lambda_1,\ldots, \lambda_n}^{m,*} (f)(x)\right| + T_{loc,\lambda_1,\ldots, \lambda_n}^{m,*} (f)(x)\nonumber\\
      \leq & \sup_{\varepsilon>0}\left| \int_{(0,\infty)^n\setminus L(x), |x-y|>\varepsilon} f(y) K^\phi_{\lambda_1,\ldots,\lambda_n}(x,y) \prod_{j=1}^n y_j^{2\lambda_j} dy\right|\nonumber\\
           & +  \sup_{\varepsilon>0}\left| \int_{L(x), |x-y|>\varepsilon} f(y) \left( K^\phi_{\lambda_1,\ldots,\lambda_n}(x,y)-\prod_{j=1}^n(x_jy_j)^{-\lambda_j}H^\phi(x,y)\right) \prod_{j=1}^n y_j^{2\lambda_j} dy\right|\nonumber\\
           & + T_{loc,\lambda_1,\ldots,\lambda_n}^{m,*}(f)(x)\nonumber\\
      \leq &  \int_{(0,\infty)^n\setminus L(x)} |f(y)|| K^\phi_{\lambda_1,\ldots,\lambda_n}(x,y)| \prod_{j=1}^n y_j^{2\lambda_j} dy\nonumber\\
           & + \int_{L(x)} |f(y)| \left| K^\phi_{\lambda_1,\ldots,\lambda_n}(x,y)-\prod_{j=1}^n(x_jy_j)^{-\lambda_j}H^\phi(x,y)\right| \prod_{j=1}^n y_j^{2\lambda_j} dy\nonumber\\
           & + T_{loc,\lambda_1,\ldots,\lambda_n}^{m,*}(f)(x)\nonumber\\
         = & \mathcal{G}_{\lambda_1,\ldots,\lambda_n}^\phi (|f|)(x) + \mathcal{L}_{\lambda_1,\ldots,\lambda_n}(|f|)(x) + T_{loc,\lambda_1,\ldots,\lambda_n}^{m,*}(f)(x), \quad x\in(0,\infty)^n.
   \end{align}
   We are going to show the $L^p$-boundedness properties for the operator $\mathcal{G}_{\lambda_1,\ldots,\lambda_n}^\phi,$ $\mathcal{L}_{\lambda_1,\ldots,\lambda_n}$ and $T_{loc,\lambda_1,\ldots,\lambda_n}^{m,*}.$\\

   We begin studying the operator $T_{loc,\lambda_1,\ldots, \lambda_n}^{m,*}.$ For every $j=(j_1,\ldots,j_n)\in \mathbb{Z}^n, $ the dyadic cube $Q_j$ is defined by
   $$ Q_j=\{ y=(y_1,\ldots,y_n)\in(0,\infty)^n :\ 2^{j_i} \leq y_i < 2^{j_i+1}, \ i=1,\ldots, n \},$$
   and the cube $\tilde{Q_j}$ is given by
   $$ \tilde{Q_j}=\{ y=(y_1,\ldots,y_n)\in(0,\infty)^n :\ 2^{j_i-1} \leq y_i < 2^{j_i+2}, \ i=1,\ldots, n \}.$$
   It is clear that if $j\in \mathbb{Z}^n$, $x\in Q_j$ and $y\in L(x),$ then $y\in \tilde{Q_j}.$ We can write
   \begin{align}\label{2.23}
      \int_{L(x), |x-y|>\varepsilon} f(y) &\prod_{l=1}^n(x_ly_l)^{-\lambda_l}H^\phi(x,y) \prod_{l=1}^n y_l^{2\lambda_l} dy=  \int_{\tilde{Q_j}, |x-y|>\varepsilon} f(y) \prod_{l=1}^n \left( \frac{y_l}{x_l}\right)^{\lambda_l}H^\phi(x,y) dy\nonumber\\
      &- \int_{\tilde{Q_j}\setminus L(x), |x-y|>\varepsilon} f(y) \prod_{l=1}^n \left( \frac{y_l}{x_l}\right)^{\lambda_l}H^\phi(x,y) dy, \ x\in Q_j, j\in \mathbb{Z}^n \text{ and } \varepsilon >0.
   \end{align}
   Let $j=(j_1,\ldots,j_n) \in \mathbb{Z}^n.$ It has $\tilde{Q_j}\setminus L(x) = \overset{n}{\underset{i=1}{\bigcup}}(\tilde{Q}_{j,i}^+\cup Q_{j,i}^-)$ where
   $$\tilde{Q}_{j,i}^+ = \{ y=(y_1,\ldots, y_n) \in (0,\infty)^n :\ 2^{j_l-1} \leq y_l <2^{j_l+2}, l=1,\ldots, n; l\neq i; \ 2x_i <y_i<2^{j_i+2} \}$$
   and
   $$\tilde{Q}_{j,i}^- = \{ y=(y_1,\ldots, y_n) \in (0,\infty)^n :\ 2^{j_l-1} \leq y_l <2^{j_l+2}, l=1,\ldots, n; l\neq i; \  2^{j_i-1}<y_i< x_i/2\}$$
   for $i=1,\ldots,n.$\\

   We have that
   \begin{align}\label{2.24}
      |H^\phi(x,y)|\leq &\int_0^\infty |\phi(t)| \left| \frac{\partial}{\partial t} \left( \frac{e^{-|x-y|^2/4t}}{t^{n/2}}\right)\right|dt \leq C \int_0^\infty \frac{e^{-|x-y|^2/8t}}{t^{n/2+1}}dt \nonumber\\
      \leq & C\frac{1}{|x-y|^n} \int_0^\infty \frac{e^{-1/u}}{u^{n/2+1}}du \leq C \frac{1}{|x-y|^n}, \quad x,y\in(0,\infty)^n.
   \end{align}

   By \eqref{2.24}, for every $\varepsilon>0,$ we get
   \begin{align*}
      &\left| \int_{\tilde{Q_j}\setminus L(x), |x-y|>\varepsilon} f(y) \prod_{l=1}^n \left( \frac{y_l}{x_l}\right)^{\lambda_l}H^\phi(x,y) dy \right| \leq \int_{\tilde{Q_j}\setminus L(x)} |f(y)| \prod_{l=1}^n \left( \frac{y_l}{x_l}\right)^{\lambda_l}|H^\phi(x,y)| dy\\
      \leq& \sum_{i=1}^n \left( \int_{\tilde{Q}_{j,i}^+} |f(y) |\prod_{l=1}^n \left( \frac{y_l}{x_l}\right)^{\lambda_l}|H^\phi(x,y) | dy+ \int_{\tilde{Q}_{j,i}^-} |f(y)| \prod_{l=1}^n \left( \frac{y_l}{x_l}\right)^{\lambda_l}|H^\phi(x,y)| dy\right)\\
      \leq & C \sum_{i=1}^n \int_{\tilde{Q}_{j,i}^+ \cup \tilde{Q}_{j,i}^-} \frac{|f(y)|}{(x_i^2+ \overset{n}{\underset{l=1,l\neq i}{\sum}}(x_l-y_l)^2)^{n/2}} dy\\
      \leq& C \sum_{i=1}^n \int_{\tilde{Q}_{j,i}^+ \cup \tilde{Q}_{j,i}^-} \frac{|f(y)|}{(2^{2j_i}+ \overset{n}{\underset{l=1,l\neq i}{\sum}}(x_l-y_l)^2)^{n/2}} dy, \ x\in Q_j.
   \end{align*}
   Then, for each $x\in Q_j,$
   \begin{equation}\label{2.25}
   \sup_{\varepsilon>0}\left|\int_{\tiny{\begin{array}{c}\tilde{Q_j}\setminus L(x)\\ |x-y|>\varepsilon\end{array}}} f(y) \prod_{l=1}^n \left( \frac{y_l}{x_l}\right)^{\lambda_l}H^\phi(x,y) dy \right| \leq C \sum_{i=1}^n \int_{\tilde{Q}_{j,i}^+ \cup \tilde{Q}_{j,i}^-} \frac{|f(y)|}{(2^{2j_i}+ \overset{n}{\underset{\tiny{\begin{array}{c}l=1\\l\neq i\end{array}}}{\sum}}(x_l-y_l)^2)^{n/2}} dy.
   \end{equation}
   For every $i=1,\ldots,n,$ we define
   $$ f_{j,i}(\bar{y}_i)= \int_{2^{j_i-1}}^{2^{j_i+2}}|f(y)|dy_i \chi_{\overset{n}{\underset{\tiny{l=1,l\neq i}}{\prod}} [2^{j_l-1}, 2^{j_l+2}]} (\bar{y}_i), \ \bar{y}_i = (y_1,\ldots, y_{i-1},y_{i+1},\ldots , y_n)\in(0,\infty)^{n-1}.$$
   From \eqref{2.25} it follows that
   \begin{align*}
      \sup_{\varepsilon>0}&\left|\int_{\tiny{\begin{array}{c}\tilde{Q_j}\setminus L(x)\\ |x-y|>\varepsilon\end{array}}} f(y) \prod_{l=1}^n \left( \frac{y_l}{x_l}\right)^{\lambda_l}H^\phi(x,y) dy \right| \leq C \sum_{i=1}^n \int_{(0,\infty)^{n-1}} \frac{f_{j,i}(\bar{y}_i)}{(2^{2j_i}+ |\bar{x}_i-\bar{y}_i|^2)^{n/2}} d\bar{y}_i\\
      \leq & C\sum_{i=1}^n \left( \sum_{k=0}^\infty \int_{\tiny{\begin{array}{l} (0,\infty)^{n-1}\\ 2^{k+j_i}<|\bar{y}_i-\bar{x}_i| < 2^{k+j_i+1}\end{array}}} \frac{f_{j,i}(\bar{y}_i)}{(2^{2j_i}+ |\bar{x}_i-\bar{y}_i|^2)^{n/2}} d\bar{y}_i \right.\\
      & + \left.\int_{\tiny{\begin{array}{l}(0,\infty)^{n-1}\\ |\bar{y}_i- \bar{x}_i| < 2^{j_i}\end{array}}} \frac{f_{j,i}(\bar{y}_i)}{(2^{2j_i}+ |\bar{x}_i-\bar{y}_i|^2)^{n/2}} d\bar{y}_i\right)\\
      \leq & C\sum_{i=1}^n \left( \sum_{k=0}^\infty \frac{1}{2^{nj_i}(1+2^{2k})^{n/2}} \int_{|\bar{y}_i-\bar{x}_i|< 2^{k+j_i+1}} f_{j,i}(\bar{y}_i) d\bar{y}_i + \frac{1}{2^{nj_i}} \int_{|\bar{y}_i-\bar{x}_i|<2^{j_i}} f_{j,i}(\bar{y}_i) d\bar{y}_i \right)\\
      \leq & C\sum_{i=1}^n \frac{1}{2^{j_i}}M_{n-1}(f_{j,i})(\bar{x}_i), \quad x\in Q_j.
   \end{align*}
   Here, $M_{n-1}$ represents the Hardy-Littlewood maximal function on $\mathbb{R}^{n-1}.$ We recall that $M_{n-1}$ is a bounded operator from $L^p(\mathbb{R}^{n-1})$ into itself, for every $1<p<\infty,$ and from $L^1(\mathbb{R}^{n-1})$ into $L^{1,\infty}(\mathbb{R}^{n-1}).$\\

   On the other hand, it is known that the maximal operator $T^{m,*}$ defined by
   $$ T^{m,*} (f)(x)= \sup_{\varepsilon>0} \left| \int_{|x-y|>\varepsilon}f(y) H^\phi(x,y) dy\right|, \quad x\in\mathbb{R}^n,$$
   is bounded from $L^p(\mathbb{R}^n,dx)$ into itself, for every $1<p<\infty,$ and from $L^1(\mathbb{R}^n,dx)$ into $L^{1,\infty}(\mathbb{R}^n,dx).$\\

   We denotes by $m_{\lambda_1,\ldots,\lambda_n}^{(n)}$ the positive measure given by $\displaystyle m_{\lambda_1,\ldots,\lambda_n}^{(n)}(E)=\int_E \prod_{j=1}^n x_j^{2\lambda_j}dx,$ for every Lebesgue measurable set $E.$\\

   Let $\gamma>0.$ We have that,
   \begin{align*}
      m&_{\lambda_1,\ldots,\lambda_n}^{(n)} (\{ x\in(0,\infty)^n: T_{loc,\lambda_1,\ldots,\lambda_n}^{m,*}(f)(x) > \gamma\})\\
      =& \sum_{j\in \mathbb{Z}^n} m_{\lambda_1,\ldots,\lambda_n}^{(n)} (\{ x\in Q_j: T_{loc,\lambda_1,\ldots,\lambda_n}^{m,*}(f)(x) > \gamma\})\\
      \leq & \sum_{j\in \mathbb{Z}^n} \Big(m_{\lambda_1,\ldots,\lambda_n}^{(n)} (\{ x\in Q_j : T^{m,*}(\prod_{l=1}^n(y_l/x_l)^{\lambda_l}f\chi_{\tilde{Q}_j})(x) > C\gamma\})\\
      & + m_{\lambda_1,\ldots,\lambda_n}^{(n)} \Big(\Big\{ x\in Q_j : \sup_{\varepsilon>0} \Big| \int_{\tilde{Q}_j\setminus L(x)}f(y) \prod_{l=1}^n \left( \frac{y_l}{x_l}\right)^{\lambda_l}H^\phi(x,y) dy \Big| > C\gamma\Big\}\Big)\Big)\\
      \leq & \sum_{j\in \mathbb{Z}^n} \left( \frac{C}{\gamma} \prod_{i=1}^n 2^{2j_i\lambda_i}\| f\chi_{\tilde{Q}_j}\|_{L^1(\mathbb{R}^n)}+ \sum_{i=1}^n m_{\lambda_1,\ldots, \lambda_n}^{(n)} (\{ x\in \tilde{Q}_j: M_{n-1}(f_{j,i})(\bar{x}_i)>2^{j_i}C\gamma\}\right)\\
      \leq & \sum_{j\in \mathbb{Z}^n}\left( \frac{C}{\gamma} \| f\chi_{\tilde{Q}_j}\|_{L^1((0,\infty)^n, \overset{n}{\underset{i=1}{\prod}} x_i^{2\lambda_i}dx)}\right.\\
      &+ \sum_{i=1}^n 2^{(2\lambda_i+1)j_i}m_{\lambda_1,\ldots,\lambda_{i-1},\lambda_{i+1},\ldots,\lambda_n}^{(n-1)}(\{ \bar{x}_i\in \prod_{l=1,\,l\neq i}^n [2^{j_l}, 2^{j_l+1}):
       M_{n-1}(f_{j,i})(\bar{x}_i)>2^{j_i}C\gamma \})\Big)\\
      \leq & \frac{C}{\gamma} \sum_{j\in \mathbb{Z}^n} \left( \| f\chi_{\tilde{Q}_j}\|_{L^1((0,\infty)^n, \overset{n}{\underset{i=1}{\prod}} x_i^{2\lambda_i}dx)}+ \sum_{i=1}^n \|f_{j,i}\|_{L^1(\mathbb{R}^{n-1})} \prod_{l=1}^n 2^{2j_l\lambda_l}\right)\\
      \leq & \frac{C}{\gamma} \sum_{j\in \mathbb{Z}^n}\| f\chi_{\tilde{Q}_j}\|_{L^1((0,\infty)^n, \overset{n}{\underset{i=1}{\prod}} x_i^{2\lambda_i}dx)} \leq \frac{C}{\gamma}\| f\|_{L^1((0,\infty)^n, \overset{n}{\underset{i=1}{\prod}} x_i^{2\lambda_i}dx)}, \quad f\in L^1((0,\infty)^n, \overset{n}{\underset{i=1}{\prod}} x_i^{2\lambda_i}dx).
   \end{align*}
    Hence, $T_{loc,\lambda_1,\ldots,\lambda_n}^{m,*}$ is bounded from $L^1((0,\infty)^n, \overset{n}{\underset{j=1}{\prod}} x_j^{2\lambda_j}dx)$ into $L^{1,\infty}((0,\infty)^n, \overset{n}{\underset{i=1}{\prod}} x_i^{2\lambda_i}dx).$ Also, if $1<p<\infty,$ we have
    \begin{align*}
       \| T&_{loc,\lambda_1,\ldots,\lambda_n}^{m,*}(f)\|^p_{L^p((0,\infty)^n, \overset{n}{\underset{j=1}{\prod}} x_j^{2\lambda_j}dx)} \leq \sum_{j\in \mathbb{Z}^n} \|T_{loc,\lambda_1,\ldots,\lambda_n}^{m,*}(f)\chi_{{Q}_j}\|^p_{L^p(\mathbb{R}^n,dx)} \prod_{l=1}^n 2^{2\lambda_lj_l}\\
       \leq& C \sum_{j\in \mathbb{Z}^n}\left( \int_{Q_j}|T^{m,*}(\prod_{l=1}^n(y_l/x_l)^{\lambda_l}f\chi_{\tilde{Q}_j})(x)|^p dx+ \sum_{i=1}^n \frac{1}{2^{j_ip}}\int_{Q_j} |M_{n-1}(f_{j,i})(\bar{x}_i)|^p d\bar{x}_idx_i\right)\prod_{l=1}^n 2^{2\lambda_lj_l}\\
       \leq & C \sum_{j\in \mathbb{Z}^n} \left(\int_{\tilde{Q}_j} |f(x)|^p dx + \sum_{i=1}^n \frac{1}{2^{j_ip}} \int_{2^{j_i}}^{2^{j_i+1}} \|f_{j,i} \|^p_{L^p(\mathbb{R}^{n-1})} dx_i \right)\prod_{l=1}^n 2^{2\lambda_lj_l}\\
       \leq & C \sum_{j\in \mathbb{Z}^n}\int_{\tilde{Q}_j} |f(x)|^p dx \prod_{l=1}^n 2^{2\lambda_lj_l} \leq C \|f\|^p_{L^p((0,\infty)^n, \overset{n}{\underset{j=1}{\prod}} x_j^{2\lambda_j}dx)}, \ f\in L^p((0,\infty)^n, \overset{n}{\underset{j=1}{\prod}} x_j^{2\lambda_j}dx).
    \end{align*}
     It is proved that $T_{loc,\lambda_1,\ldots,\lambda_n}^{m,*}$ is bounded from $L^p((0,\infty)^n, \overset{n}{\underset{j=1}{\prod}} x_j^{2\lambda_j}dx)$ into itself, for every $1<p<\infty.$\\

    We now analyze the operator $\mathcal{L}_{\lambda_1,\ldots,\lambda_n}.$ Let $\alpha>-1/2$.
         From \eqref{I2} and \eqref{Z0} we deduce that, if $t,u,v\in(0,\infty)$ and $uv/t> 1,$
    \begin{align}\label{2.27}
       \frac{\partial}{\partial t}W_t^\alpha (u,v) = &\left\{- \frac{\alpha+1/2}{t} \left( \frac{uv}{2t}\right)^{1/2}I_{\alpha-1/2}\left( \frac{uv}{2t}\right)- \frac{uv}{2t^2}\left( \frac{uv}{2t}\right)^{1/2}I_{\alpha+1/2}\left( \frac{uv}{2t}\right)\nonumber \right.\\
       &+ \left.\frac{u^2+v^2}{4t^2}\left( \frac{uv}{2t}\right)^{1/2}I_{\alpha-1/2}\left( \frac{uv}{2t}\right)\right\}\frac{(uv)^{-\alpha}}{\sqrt{2t}}e^{-(u^2+v^2)/4t}\nonumber\\
       =& \left\{-\frac{\alpha+1/2}{t} \left(1+ \mathcal{O}\left( \frac{t}{uv}\right)\right) - \frac{uv}{2t^2} \left( 1-\frac{4(\alpha+1/2)^2-1}{4}\frac{t}{uv}+ \mathcal{O}\left( \left( \frac{t}{uv}\right)^2\right) \right) \right.\nonumber\\
       & + \left.\frac{u^2+v^2}{4t^2}\left( 1-\frac{4(\alpha+1/2)^2-1}{4}\frac{t}{uv}+ \mathcal{O}\left( \left( \frac{t}{uv}\right)^2\right) \right)\right\}\frac{(uv)^{-\alpha}}{\sqrt{4\pi t}} e^{-(u-v)^2/4t}\nonumber \\
       =& \left\{\frac{(u-v)^2}{4t^2}-\frac{1}{2t}+ \frac{(u-v)^2}{4tuv}+\mathcal{O}\left( \frac{1}{uv}\right)+ \mathcal{O}\left( \frac{(u-v)^2}{(uv)^2}\right) \right\}\frac{(uv)^{-\alpha}}{\sqrt{4\pi t}} e^{-(u-v)^2/4t}\nonumber\\
       =& \frac{\partial}{\partial t} \left(\frac{e^{(u-v)^2/4t}}{\sqrt{4\pi t}}\right)(uv)^{-\alpha}+ \mathcal{O}\left( \frac{(uv)^{-\alpha}}{t^{1/2}}e^{-(u-v)^2/8t}\left( \frac{1}{uv}+\frac{(u-v)^2}{(uv)^2}\right)\right).
    \end{align}
    By proceeding in a similar way we can see that
    \begin{equation}\label{2.30}
       \left| W_t^\alpha(u,v)-(uv)^{-\alpha}\frac{e^{-(u-v)^2/4t}}{\sqrt{4\pi t}}\right|\leq C (uv)^{-\alpha-1}\sqrt{t}e^{-(u-v)^2/4t},\,\,\, t,u,v\in (0,\infty) \text{ and } \frac{uv}{t}> 1.
    \end{equation}
    We have that
    \begin{align*}
       \big| \mathcal{L}&_{\lambda_1,\ldots,\lambda_n}(f)(x)\big|\leq \int_{L(x)} |f(y)| \big| K_{\lambda_1,\ldots, \lambda_n}^\phi(x,y)-\prod_{j=1}^n (x_jy_j)^{-\lambda_j} H^\phi(x,y) \big| \prod_{j=1}^n y_j^{2\lambda_j} dy\\
       \leq & \int_{L(x)}|f(y)| \int_0^\infty |\phi(t)| \left| \frac{\partial}{\partial t}\left( \prod_{j=1}^n W_t^{\lambda_j}(x_j,y_j)\right)-\prod_{j=1}^n (x_jy_j)^{-\lambda_j}\frac{\partial}{\partial t} \left( \prod_{j=1}^n \frac{e^{-(x_j-y_j)^2/4t}}{\sqrt{4\pi t}}\right)\right| dt \prod_{j=1}^n y_j^{2\lambda_j} dy\\
       \leq & C \sum_{i=1}^n \int_{L(x)} |f(y)|\int_0^\infty \left|\prod_{\tiny{\begin{array}{c}j=1\\ j\neq i\end{array}}}^n W_t^{\lambda_j}(x_j,y_j) \frac{\partial}{\partial t} W_t^{\lambda_i}(x_i,y_i) \right.\\
       & - \prod_{j=1}^n (x_jy_j)^{-\lambda_j} \left.\prod_{\tiny{\begin{array}{c}j=1\\ j\neq i\end{array}}}^n \frac{e^{-(x_j-y_j)^2/4t}}{\sqrt{4\pi t}} \frac{\partial}{\partial t}\left( \frac{e^{-(x_i-y_i)^2/4t}}{\sqrt{4\pi t}}\right) \right|dt \prod_{j=1}^n y_j^{2\lambda_j} dy, \quad x\in(0,\infty)^n.
    \end{align*}
   We analyze the operator defined by the first summand. The other ones can be studied similarly. It follows that, for each $x \in (0,\infty)^n,$
   \begin{align}\label{2.31}
      \mathcal{L}^1_{\lambda_1,\ldots,\lambda_n}(f)(x) = & \int_{L(x)} |f(y)|\int_0^\infty \left|\prod_{j=2}^n W_t^{\lambda_j}(x_j,y_j) \frac{\partial}{\partial t} W_t^{\lambda_1}(x_1,y_1)\right. \nonumber\\
      &- \left. \prod_{j=1}^n (x_jy_j)^{-\lambda_j} \prod_{j=2}^n \frac{e^{-\frac{(x_j-y_j)^2}{4t}}}{\sqrt{4\pi t}} \frac{\partial}{\partial t}\left( \frac{e^{-\frac{(x_1-y_1)^2}{4t}}}{\sqrt{4\pi t}}\right) \right|dt \prod_{j=1}^n y_j^{2\lambda_j} dy\nonumber \\
      \leq & \int_{L(x)} |f(y)|\int_0^\infty \left| \frac{\partial}{\partial t} W_t^{\lambda_1}(x_1,y_1)-(x_1y_1)^{-\lambda_1}\frac{\partial}{\partial t} \left( \frac{e^{-\frac{(x_1-y_1)^2}{4t}}}{\sqrt{4 \pi t}} \right) \right| \prod_{j=2}^n W_t^{\lambda_j}(x_j,y_j)dt\nonumber\\
      &\times  \prod_{j=1}^n y_j^{2\lambda_j} dy  +  \sum_{i=2}^n \int_{L(x)} |f(y)|\int_0^\infty (x_1y_1)^{-\lambda_1}\left|\frac{\partial}{\partial t} \left( \frac{e^{-\frac{(x_1-y_1)^2}{4t}}}{\sqrt{4 \pi t}} \right) \right| \prod_{j=2}^{i-1}\left( \frac{e^{-\frac{(x_j-y_j)^2}{4t}}}{\sqrt{4\pi t}}\right) \nonumber\\
      & \times (x_jy_j)^{-\lambda_j}\left|W_t^{\lambda_i}(x_i,y_i)-(x_iy_i)^{-\lambda_i}\frac{e^{-\frac{(x_i-y_i)^2}{4t}}}{\sqrt{4\pi t}} \right| \prod_{j=i+1}^n W_t^{\lambda_j} (x_j,y_j) dt \prod_{j=1}^n y_j^{2\lambda_j} dy .
   \end{align}
    We now use \eqref{I1}, \eqref{I2}, \eqref{Z0}, and \eqref{2.27} to obtain the following
    \begin{align}\label{A}
       \mathcal{L}^{1,1}_{\lambda_1,\ldots,\lambda_n}&(f)(x) = \int_{L(x)} |f(y)|\int_0^\infty \left| \frac{\partial}{\partial t} W_t^{\lambda_1}(x_1,y_1)-(x_1y_1)^{-\lambda_1}\frac{\partial}{\partial t} \left( \frac{e^{-\frac{(x_1-y_1)^2}{4t}}}{\sqrt{4 \pi t}} \right) \right| \nonumber\\
      &\times \prod_{j=2}^n W_t^{\lambda_j}(x_j,y_j)dt \prod_{j=1}^n y_j^{2\lambda_j} dy \nonumber\\
      \leq & \int_{L(x)} |f(y)| \left\{ \int_0^{x_1y_1} \frac{(x_1y_1)^{-\lambda_1-1}}{t^{1/2}}e^{-\frac{(x_1-y_1)^2}{10t}}\prod_{j=2}^n \left( \frac{1}{x_j^{2\lambda_j+1}}+ (x_jy_j)^{-\lambda_j}\frac{e^{-\frac{(x_j-y_j)^2}{4t}}}{\sqrt{t}} \right)dt\right.\nonumber \\
      & + \left. \int_{x_1y_1}^\infty \left( \frac{1}{t^{\lambda_1+\frac{3}{2}}} + (x_1y_1)^{-\lambda_1}\frac{1}{t^{\frac{3}{2}}}\right)\prod_{j=2}^n \left( \frac{1}{x_j^{2\lambda_j+1}}+ (x_jy_j)^{-\lambda_j}\frac{e^{-\frac{(x_j-y_j)^2}{4t}}}{\sqrt{t}} \right)dt\right\} \prod_{j=1}^n y_j^{2\lambda_j} dy
    \end{align}
    for every $x\in (0,\infty)^n.$\\

    Then, the operator $\mathcal{L}^{1,1}_{\lambda_1,\ldots,\lambda_n}$ is controlled by operators of the following type:
    $$ \Lambda^l_{\lambda_1,\ldots,\lambda_n}(g)(x)= \sup_{t>0}\left|\int_{\frac{x_{l+1}}{2}}^{2x_{l+1}} \ldots \int_{\frac{x_n}{2}}^{2x_n} \prod_{j=l+1}^{n} x_j^{-2\lambda_j-1}\int_\frac{x_1}{2}^{2x_1}\ldots \int_\frac{x_l}{2}^{2x_l} \prod_{j=1}^l (x_jy_j)^{-\lambda_j} \frac{e^{-\frac{(x_j-y_j)^2}{10t}}}{\sqrt{t}}g(y) \prod_{j=1}^n y_j^{2\lambda_j}dy \right|,$$
    where $0\le l\le n$, $l\in \mathbb{N}.$ For each $0\le l\le n$, $l\in \mathbb{N},$ $\Lambda^l_{\lambda_1,\ldots,\lambda_n}$ is bounded operator from $L^p((0,\infty)^n, \overset{n}{\underset{j=1}{\prod}} x_j^{2\lambda_j}dx)$ into itself, for every $1<p<\infty,$ and from $L^1((0,\infty)^n, \overset{n}{\underset{j=1}{\prod}} x_j^{2\lambda_j}dx)$ into $L^{1,\infty}((0,\infty)^n, \overset{n}{\underset{j=1}{\prod}} x_j^{2\lambda_j}dx).$ Indeed, let $k\in \mathbb{N}.$ We consider the maximal operators
    $$\Omega_{\beta_1,\ldots,\beta_k} (g) (x) = \sup_{t>0}\left|\int_{\frac{x_{1}}{2}}^{2x_{1}} \ldots \int_{\frac{x_k}{2}}^{2x_k}\prod_{j=1}^k (x_jy_j)^{-\beta_j} \frac{e^{- \frac{(x_j-y_j)^2}{4t}}}{\sqrt{t}} g(y) \prod_{j=1}^k y_j^{2\beta_j}dy\right|, \quad x\in (0,\infty)^k, $$
    where $\beta_j \in \mathbb{R}, j=1,\ldots,k,$ and
    $$ M_k(g) (x)= \sup_{t>0}\left| \int_{(0,\infty)^k}\frac{e^{|x-y|^2/4t}}{t^{k/2}}g(y)dy\right|, \quad x\in (0,\infty)^k.$$
   For every $j= (j_1, \ldots, j_k)\in \mathbb{Z}^k $ we define
   $$ Q_j=\{ y=(y_1,\ldots,y_k)\in(0,\infty)^k :\ 2^{j_i} \leq y_i < 2^{j_i+1}, \ i=1,\ldots, k \},$$
   and
   $$ \tilde{Q_j}=\{ y=(y_1,\ldots,y_k)\in(0,\infty)^k :\ 2^{j_i-1} \leq y_i < 2^{j_i+2}, \ i=1,\ldots, k \}.$$

   Assume that $\gamma>0.$ Since, as it well known, the operator $M_k$ is bounded from $L^1((0,\infty)^k, dx)$ into $L^{1,\infty}((0,\infty)^k, dx),$ we have
   \begin{align*}
      m_{\beta_1,\ldots,\beta_k}^{(k)} (\{x&\in (0,\infty)^k : \Omega_{\beta_1,\ldots,\beta_k} (g)(x)>\lambda \}) = \sum_{j\in \mathbb{Z}^k} m_{\beta_1,\ldots,\beta_k}^{(k)}(\{ x\in Q_j : \Omega_{\beta_1,\cdot,\beta_k}(g)(x)>\lambda\})\\
      \leq &\sum_{j\in \mathbb{Z}^k}2^{2\overset{k}{\underset{i=1}{\sum}} \beta_i j_i}m_{0,\ldots,0}^{(k)}(\{ x\in \mathbb{R}^k: M_k(|g|\chi_{\tilde{Q}_j})(x) >C\gamma\})\\
      \leq &\frac{C}{\gamma} \sum_{j\in \mathbb{Z}^k}2^{2\overset{k}{\underset{i=1}{\sum}} \beta_i j_i} \int_{\tilde{Q}_j}|g(y)|dy \\
      \leq &\frac{C}{\gamma}  \int_{(0,\infty)^k}|g(y)|\prod_{j=1}^k y_j^{2\beta_j}dy, \quad g\in  L^1((0,\infty)^k, \overset{k}{\underset{j=1}{\prod}} x_j^{2\beta_j}dx).
   \end{align*}
   Also, $M_k$ is bounded from $L^p((0,\infty)^k, dx)$ into itself, for every $1<p<\infty,$ and it has
   \begin{align*}
      \int_{(0,\infty)^k}|\Omega_{\beta_1,\ldots,\beta_k}(g)(x)|^p &\prod_{j=1}^k x_j^{2\beta_j}dx  = 2^{2 \overset{k}{\underset{i=1}{\sum}} \beta_i j_i}\sum_{j\in \mathbb{Z}^k}\int_{Q_j}|\Omega_{\beta_1,\ldots,\beta_k}(g)(x)|^p dx \\
      \leq & C \sum_{j\in \mathbb{Z}^k} 2^{2 \overset{k}{\underset{i=1}{\sum}} \beta_i j_i}\int_{\mathbb{R}^k}|M_k(|g|\chi_{\tilde{Q}_j})(x)|^p dx \\
      \leq & C 2^{2 \overset{k}{\underset{i=1}{\sum}} \beta_i j_i}\sum_{j\in \mathbb{Z}^k} \int_{\tilde{Q}_j} |g(y)|^p dy \\
      \leq & C \int_{(0,\infty)^k} |g(y)|^p \prod_{j=1}^k y_j^{2\beta_j}dy, \quad g\in L^p((0,\infty)^k, \overset{k}{\underset{j=1}{\prod}}
      x_j^{2\beta_j}dx),
   \end{align*}
   and $\Omega_{\beta_1,\ldots,\beta_k}$ is bounded from $L^p((0,\infty)^k, \overset{k}{\underset{j=1}{\prod}} x_j^{2\beta_j}dx)$ into itself, for every $1<p<\infty.$\\

   On the other hand, it is not hard to see that, for every $k\in \mathbb{N}$ and $\beta_j\in \mathbb{R},$ $j=1,\ldots,k,$ the operator $Z_{\beta_1,\ldots,\beta_k}$ defined by
   $$Z_{\beta_1,\ldots,\beta_k}(g)(x)= \prod_{j=1}^k x_j^{-2\beta_j-1} \int_{x_1/2}^{2x_1} \ldots, \int_{x_k/2} ^{2x_k} g(y) \prod_{j=1}^k z_j^{2\beta_j}dy ,\quad x\in(0,\infty)^k, $$
   is bounded from $L^p((0,\infty)^k, \overset{k}{\underset{j=1}{\prod}} x_j^{2\beta_j}dx)$ into itself, for every $1\le p<\infty.$\\

   Then, the $L^p$-boundedness properties of operators $\Omega_{\beta_1,\ldots,\beta_k}$ and $Z_{\beta_1,\ldots,\beta_k},$ $\beta_j>-1/2$, $j=1,\ldots,k$, $k\in \mathbb{N},$ allow us, by using \cite[Proposition 1]{Din},
   to conclude that, for every $m\in \mathbb{N},$ the operator $\Lambda^m_{\lambda_1,\ldots,\lambda_n}$ is bounded from $L^p((0,\infty)^n, \overset{n}{\underset{j=1}{\prod}} x_j^{2\lambda_j}dx)$ into
   itself, for every $1<p<\infty,$ and from $L^1((0,\infty)^n, \overset{n}{\underset{j=1}{\prod}} x_j^{2\lambda_j}dx)$ into
   $L^{1,\infty}((0,\infty)^n, \overset{n}{\underset{j=1}{\prod}} x_j^{2\lambda_j}dx).$
   Hence, the operator $\mathcal{L}^{1,1}_{\lambda_1,\ldots,\lambda_n}$ is bounded from $L^p((0,\infty)^n, \overset{n}{\underset{j=1}{\prod}} x_j^{2\lambda_j}dx)$ into itself, for every
   $1<p<\infty,$ and from $L^1((0,\infty)^n, \overset{n}{\underset{j=1}{\prod}} x_j^{2\lambda_j}dx)$ into $L^{1,\infty}((0,\infty)^n, \overset{n}{\underset{j=1}{\prod}} x_j^{2\lambda_j}dx).$\\

   For every $i=2,\ldots,n,$ it has
   \begin{align}\label{Ai}
      \mathcal{L}^{1,i}_{\lambda_1,\ldots,\lambda_n}&(f)(x) = \int_{L(x)} |f(y)|\int_0^\infty (x_1y_1)^{-\lambda_1}\left|\frac{\partial}{\partial t} \left( \frac{e^{-\frac{(x_1-y_1)^2}{4t}}}{\sqrt{4 \pi t}} \right) \right| \prod_{j=2}^{i-1}\left( \frac{e^{-\frac{(x_j-y_j)^2}{4t}}}{\sqrt{4\pi t}}\right) \nonumber\\
      & \times (x_jy_j)^{-\lambda_j}\left|W_t^{\lambda_i}(x_i,y_i)-(x_iy_i)^{-\lambda_i}\frac{e^{-\frac{(x_i-y_i)^2}{4t}}}{\sqrt{4\pi t}} \right| \prod_{j=i+1}^n W_t^{\lambda_j} (x_j,y_j) dt \prod_{j=1}^n y_j^{2\lambda_j} dy\nonumber\\
      \leq & C \int_{L(x)} |f(y)| \left\{ \int_{0}^{x_iy_i} \frac{e^{-\overset{i-1}{\underset{j=1}{\sum}} \frac{(x_j-y_j)^2}{4t}}}{t^{\frac{i+1}{2}}} \prod_{j=1}^{i-1}(x_jy_j)^{-\lambda_j}(x_iy_i)^{-\lambda_i-1}\sqrt{t}e^{-\frac{(x_i-y_i)^2}{4t}}\right.\\
      & \times \prod_{j=i+1}^n \left( \frac{1}{x_j^{2\lambda_j + 1}}+(x_jy_j)^{-\lambda_j}\frac{e^{-\frac{(x_j-y_j)^2}{4t}}}{\sqrt{t}}\right)dt +  \int_{x_iy_i}^\infty \prod_{j=1}^{i-1}(x_jy_j)^{-\lambda_j}\frac{e^{-\overset{i-1}{\underset{j=1}{\sum}} \frac{(x_j-y_j)^2}{4t}}}{t^{\frac{i+1}{2}}}\left( \frac{1}{t^{\lambda_i + \frac{1}{2}}} \right.\nonumber\\
      & +\left.\left. (x_iy_i)^{-\lambda_i}\frac{e^{-\frac{(x_i-y_i)^2}{4t}}}{\sqrt{t}}\right)\prod_{j=i+1}^n \left( \frac{1}{x_j^{2\lambda_j+1}} + (x_jy_j)^{-\lambda_j}\frac{e^{-\frac{(x_j-y_j)^2}{4t}}}{\sqrt{t}} \right) dt \right\} \prod_{j=1}^n y_j^{2\lambda_j}dy,\,\,\,x\in (0,\infty)^n.\nonumber
   \end{align}
   Then, the operator $\mathcal{L}^{1,i}_{\lambda_1,\ldots,\lambda_n}$ can be controlled by operator of type $\Lambda^m_{\lambda_1,\ldots,\lambda_n}$, $m\in \mathbb{N}.$ Thus, we conclude that $ \mathcal{L}^{1,i}_{\lambda_1,\ldots,\lambda_n}$ is bounded from $L^p((0,\infty)^n, \overset{n}{\underset{j=1}{\prod}} x_j^{2\lambda_j}dx)$ into itself, for every $1<p<\infty,$ and from $L^1((0,\infty)^n, \overset{n}{\underset{j=1}{\prod}} x_j^{2\lambda_j}dx)$ into $L^{1,\infty}((0,\infty)^n, \overset{n}{\underset{j=1}{\prod}} x_j^{2\lambda_j}dx).$\\

   From the assertions proved in \eqref{A} and \eqref{Ai}, $i=2,\ldots,n,$ we deduce that the operator $\mathcal{L}^1_{\lambda_1,\ldots,\lambda_n}$ is bounded from $L^p((0,\infty)^n, \overset{n}{\underset{j=1}{\prod}} x_j^{2\lambda_j}dx)$ into itself, for every $1<p<\infty,$ and from $L^1((0,\infty)^n, \overset{n}{\underset{j=1}{\prod}}$ $ x_j^{2\lambda_j}dx)$ into $L^{1,\infty}((0,\infty)^n, \overset{n}{\underset{j=1}{\prod}} x_j^{2\lambda_j}dx).$\\

   Thus we conclude that the operator $\mathcal{L}_{\lambda_1,\ldots,\lambda_n}$ is bounded from $L^p((0,\infty)^n, \overset{n}{\underset{j=1}{\prod}} x_j^{2\lambda_j}dx)$ into itself, for every $1<p<\infty,$ and from $L^1((0,\infty)^n, \overset{n}{\underset{j=1}{\prod}} x_j^{2\lambda_j}dx)$ into $L^{1,\infty}((0,\infty)^n, \overset{n}{\underset{j=1}{\prod}} x_j^{2\lambda_j}dx).$\\

   Finally we study the operator $\mathcal{G}_{\lambda_1,\ldots,\lambda_n}^\phi.$ It is clear that $\mathcal{G}_{\lambda_1,\ldots,\lambda_n}^\phi$ can be decomposed in a sum of operators like the following one
   \begin{align}\label{2.31.2}
     S_{\lambda_1,\ldots,\lambda_n}^{k,l}(g)(x) =& \int_0^\frac{x_1}{2} \ldots \int_0^\frac{x_k}{2} \int_\frac{x_{k+1}}{2}^{2x_{k+1}}\ldots \int_\frac{x_{l}}{2}^{2x_{l}} \int_{2x_{l+1}}^\infty \ldots \int_{2x_{n}}^\infty g(y) K_{\lambda_1,\ldots,\lambda_n}^\phi (x,y) \prod_{j=1}^n y_j^{2\lambda_j}dy\nonumber \\
     =& \sum_{i=1}^n \int_0^\frac{x_1}{2} \ldots \int_0^\frac{x_k}{2} \int_\frac{x_{k+1}}{2}^{2x_{k+1}}\ldots \int_\frac{x_{l}}{2}^{2x_{l}} \int_{2x_{l+1}}^\infty \ldots \int_{2x_{n}}^\infty g(y) K_{\lambda_1,\ldots,\lambda_n}^{\phi,i} (x,y) \prod_{j=1}^n y_j^{2\lambda_j}dy\nonumber  \\
     =& \sum_{i=1}^n S_{\lambda_1,\ldots,\lambda_n}^{k,l,i}(g)(x),\quad x\in (0,\infty)^n,
   \end{align}
   where $0\leq k\leq l\leq n,$ $ k\neq 0$ or $l\neq n,$ and
   $$ K_{\lambda_1,\ldots,\lambda_n}^{\phi,i}(x,y)=\int_0^\infty\prod_{j=1}^{i-1}W_t^{\lambda_j}(x_j,y_j) \frac{\partial}{\partial t}W_t^{\lambda_i}(x_i,y_i) \prod_{j=i+1}^n W_t^{\lambda_j}(x_j,y_j)\phi(t)dt, \quad x,y \in (0,\infty)^n,$$
   for every $i=1,\cdot,n$.
   From \eqref{I2} we deduce that, if $\alpha>-1/2,$
   \begin{align}\label{2.32}
      \left| W_t^\alpha(u,v)\right|\leq & C \frac{(uv)^{-\alpha} e^{-v^2/16t}}{\sqrt{t}} \leq C(uv)^{-\alpha-1/2}\left( \frac{uv}{t}\right)^{1/2}e^{-v^2/16t} \nonumber\\
      \leq & Ct^{-\alpha-1/2}\frac{v}{\sqrt{t}}e^{-v^2/16t} \leq \frac{C}{t^{\alpha+1/2}}e^{-v^2/20t},\,\, t,u,v \in (0,\infty), \frac{uv}{t}>1 \text{ and } 2u<v<\infty.
   \end{align}
   By \eqref{I1} and \eqref{2.32} we conclude that for $\alpha>-1/2$
   \begin{equation}\label{2.33}
   \left| W_t^\alpha(u,v)\right| \leq \frac{C}{t^{\alpha+1/2}}e^{-v^2/20t}, \quad t,u,v \in (0,\infty), \text{ and } 2u<v<\infty.
   \end{equation}
   From \eqref{2.27} it follows, if $\alpha>-1/2,$
   \begin{align}\label{2.34}
      \left| \frac{\partial}{\partial t} W_t^\alpha(u,v) \right| \leq &C e^{-v^2/16t}\left(\frac{1} {t^{3/2}}(uv)^{-\alpha}+ \frac{(uv)^{-\alpha-1}}{t^{1/2}}\right)\nonumber\\
      \leq & C \frac{e^{-v^2/16t}}{t^{3/2}}(uv)^{-\alpha} \leq C \frac{e^{-v^2/20t}}{t^{\alpha+3/2}}, \quad t,u,v \in (0,\infty), \,\,\frac{uv}{t}>1 \text{ and } 2u<v<\infty.
   \end{align}
   By \eqref{Z0} and \eqref{2.34} we get for every $\alpha>-1/2$
   \begin{equation}\label{H4}
   \Big|\frac{\partial}{\partial t} W_t^\alpha(u,v) \Big| \leq C \frac{e^{-v^2/20t}}{t^{\alpha+3/2}},\,\,\,t,u,v\in (0,\infty),\,\,\,2u<v<\infty.
   \end{equation}

   According to \eqref{2.33} and \eqref{H4} we infer
   \begin{align*}
      \left| S_{\lambda_1,\ldots,\lambda_n}^{0,0,i}(g)(x)\right|\leq &C\int_{2x_1}^\infty \ldots \int_{2x_n}^\infty |g(y)| \int_0^\infty \frac{e^{-\overset{n}{\underset{j=1}{\sum}} y_j^2/20t}}{t^{\overset{n}{\underset{j=1}{\sum}} (\lambda_j +1/2)+1}} dt \prod_{j=1}^n y_j^{2\lambda_j}dy\\
      \leq & C \int_{2x_1}^\infty \ldots \int_{2x_n}^\infty |g(y)|
      \frac{1}{(\overset{n}{\underset{j=1}{\sum}} y_j^2)^{\overset{n}{\underset{j=1}{\sum}}(\lambda_j+1/2)}} \prod_{j=1}^n y_j^{2\lambda_j}dy\\
      \leq & C\int_{2x_1}^\infty \ldots \int_{2x_n}^\infty |g(y)|\frac{1}{y_1\ldots y_n} dy, \quad x\in (0,\infty)^n \text{ and } i=1,\ldots,n.
   \end{align*}
   It is not hard to see that, for every $k\in \mathbb{N}$ the operator
   $$\mathcal{S}_k(g)(x)= \int_{2x_1}^\infty \ldots \int_{2x_k}^\infty |g(y)| \frac{1}{y_1\ldots y_k} dy, \quad x\in(0,\infty)^k,$$
   is bounded from $L^p((0,\infty)^k, \overset{k}{\underset{j=1}{\prod}} x_j^{2\beta_j}dx)$ into itself, for every $1\leq p<\infty$ and $\beta_j>-1/2,$ $j=1,\ldots,k$.\\

   Hence the operator $S_{\lambda_1,\ldots,\lambda_n}^{0,0,i}$ is bounded from $L^p((0,\infty)^n, \overset{n}{\underset{j=1}{\prod}}$ $ x_j^{2\lambda_j}dx),$ for every $1\leq p < \infty$ and $i=1,\ldots,n.$\\

   Let $i,k\in \mathbb{N},$ $1\leq i\leq k,$ and $\beta_j>-1/2,$ $j=1,\ldots,k.$ We define the operator $ \mathcal{H}_{\beta_1,\ldots,\beta_k}$ by
   \begin{align*}
      \mathcal{H}_{\beta_1,\ldots,\beta_k}(g)(x)=& \int_0^\frac{x_1}{2} \ldots \int_0^\frac{x_k}{2} |g(y)| \int_0^\infty \prod_{j=1}^{i-1} W_t^{\beta_j} (x_j,y_j) \Big|\frac{\partial}{\partial t} W_t^{\beta_i}(x_i,y_i)\Big|\\
      &\times \prod_{j=i+1}^k W_t^{\beta_j}(x_j,y_j) dt \prod_{j=1}^k y_j^{2\beta_j} dy, \quad x\in (0,\infty)^k.
   \end{align*}
   By taking into account symmetries, \eqref{2.33}, and \eqref{H4}, we get that
   \begin{align}\label{2.36}
       |\mathcal{H}_{\beta_1,\ldots,\beta_k}(g)(x)|\leq& C\int_0^\frac{x_1}{2} \ldots \int_0^\frac{x_k}{2} |g(y)| \int_0^\infty  \frac{e^{-\overset{k}{\underset{j=1}{\sum}} x_j^2/20t}}{t^{\overset{k}{\underset{j=1}{\sum}} (\beta_j +1/2)+1}} dt \prod_{j=1}^k y_j^{2\beta_j}dy \nonumber \\
       \leq & \frac{C}{(\overset{k}{\underset{j=1}{\sum}} x_j^2)^{\overset{k}{\underset{j=1}{\sum}}(\beta_j+1/2)}} \int_0^\frac{x_1}{2} \ldots \int_0^\frac{x_k}{2} |g(y)|  \prod_{j=1}^k
       y_j^{2\beta_j}dy.
   \end{align}
   The operator $H_{\beta_1,\ldots,\beta_k}$ given by
   $$ H_{\beta_1,\ldots,\beta_k}(g)(x) = \frac{1}{(\overset{k}{\underset{j=1}{\sum}} x_j^2)^{\overset{k}{\underset{j=1}{\sum}}(\beta_j+1/2)}} \int_0^\frac{x_1}{2} \ldots \int_0^\frac{x_k}{2} g(y)  \prod_{j=1}^k y_j^{2\beta_j}dy $$
   is bounded from $L^p((0,\infty)^k, \overset{k}{\underset{j=1}{\prod}} x_j^{2\beta_j}dx)$ into itself, for every $1<p<\infty,$ and from $L^1((0,\infty)^k,$ $ \overset{k}{\underset{j=1}{\prod}} x_j^{2\beta_j}dx)$ into $L^{1,\infty}((0,\infty)^k,$ $ \overset{k}{\underset{j=1}{\prod}} x_j^{2\beta_j}dx).$ Indeed, assume that $\gamma >0.$ We have
   \begin{align*}
      m_{\beta_1,\ldots,\beta_k}^{(k)}&(\{ x\in (0,\infty)^k :| H_{\beta_1,\ldots,\beta_k }(g)(x)| >\gamma \} )\\
      \leq & m_{\beta_1,\ldots,\beta_k}^{(k)}\Big(\Big\{ x\in (0,\infty)^k : \sum_{j=1}^k x_j^2 \leq \left( \frac{1}{\gamma} \|g\|_{L^1((0,\infty)^k, \overset{k}{\underset{j=1}{\prod}} x_j^{2\beta_j}dx)}   \right)^{\frac{1}{\overset{k}{\underset{j=1}{\sum}} (\beta_j +1/2)}} \Big\} \Big)\\
      \leq & m_{\beta_1,\ldots,\beta_k}^{(k)}\Big(\Big\{ x\in (0,\infty)^k : 0\leq x_j \leq  \left( \frac{1}{\gamma} \|g\|_{L^1((0,\infty)^k, \overset{k}{\underset{j=1}{\prod}} x_j^{2\beta_j}dx)}   \right)^{\frac{1}{\overset{k}{\underset{j=1}{\sum}} (2\beta_j +1)}},\,\,j=1,\dots,k \Big\}\Big)\\
      \leq & \frac{C}{\gamma} \|g\|_{L^1((0,\infty)^k, \overset{k}{\underset{j=1}{\prod}} x_j^{2\beta_j}dx)},\,\,\,g\in L^1((0,\infty)^k,\prod_{j=1}^kx_j^{2\beta_j}dx).
   \end{align*}
   Hence, $H_{\beta_1,\ldots,\beta_k}$ is bounded from $L^1((0,\infty)^k, \overset{k}{\underset{j=1}{\prod}} x_j^{2\beta_j}dx)$ into $L^{1,\infty}((0,\infty)^k, \overset{k}{\underset{j=1}{\prod}} x_j^{2\beta_j}dx).$\\

   On the other hand, we can write
   $$ H_{\beta_1,\ldots,\beta_k}(g)(x) \leq \frac{1}{x_1^{2\beta_1+1}} \int_0^{x_1}\frac{1}{x_2^{2\beta_2+1}} \int_0^{x_2}\ldots \frac{1}{x_k^{2\beta_k+1}} \int_0^{x_k}|g(y)|\prod_{j=1}^k y_j^{2\beta_j}dy, \quad x\in (0,\infty)^k. $$
   Since, as it is well known, the Hardy type operator $H_\beta$ given by, for every $\beta>-1/2,$
   $$ H_{\beta}(g)(x)= \frac{1}{x^{2\beta+1}} \int_0^{x}g(y)y^{2\beta}dy, \quad x\in (0,\infty),$$
   is bounded from $L^p((0,\infty),x^{2\beta}dx)$ into itself, for every $1<p<\infty$ (see \cite{Mu}), from
   \cite[Proposition 1]{Din} we deduce that $H_{\beta_1, \ldots, \beta_k}$ is a bounded operator from
   $L^p((0,\infty)^k, \overset{k}{\underset{j=1}{\prod}} x_j^{2\beta_j}dx)$ into itself, for every $1<p<\infty.$\\

   Then, from \eqref{2.36} we infer that the operator $\mathcal{H}_{\beta_1,\ldots,\beta_k}$ is bounded from $L^p((0,\infty)^k, \overset{k}{\underset{j=1}{\prod}} x_j^{2\beta_j}dx)$ into itself, for every $1<p<\infty,$ and from $L^1((0,\infty)^k, \overset{k}{\underset{j=1}{\prod}} x_j^{2\beta_j}dx)$ into $L^{1,\infty}((0,\infty)^k, \overset{k}{\underset{j=1}{\prod}} x_j^{2\beta_j}dx).$
   Also, $S_{\lambda_1,\ldots,\lambda_n}^{n,n,i}$ is bounded from $L^p((0,\infty)^n, \overset{n}{\underset{j=1}{\prod}} x_j^{2\lambda_j}dx)$ into itself, for every $1<p<\infty,$ and from $L^1((0,\infty)^n, \overset{n}{\underset{j=1}{\prod}} x_j^{2\lambda_j}dx)$ into $L^{1,\infty}((0,\infty)^n, \overset{n}{\underset{j=1}{\prod}} x_j^{2\lambda_j}dx),$ for every $i=1,\ldots,n.$\\

   Assume that $1\leq i\leq k < n.$ By using \eqref{2.33} and
   \eqref{H4} we have that
   \begin{align*}
      \left|S_{\lambda_1,\ldots,\lambda_n}^{k,k,i}(g)(x) \right|\leq & C \int_0^\frac{x_1}{2} \ldots \int_0^\frac{x_k}{2} \int_{2x_{k+1}}^\infty \ldots \int_{2x_{n}}^\infty \int_0^\infty \frac{e^{-(\overset{k}{\underset{j=1}{\sum}} x_j^2+\overset{n}{\underset{j=k+1}{\sum}} y_j^2 ) /20t}}{t^{\overset{n}{\underset{j=1}{\sum}} (\lambda_j +1/2)+1}} dt|g(y)| \prod_{j=1}^n y_j^{2\lambda_j}dy\\
      \leq & C \int_0^\frac{x_1}{2} \ldots \int_0^\frac{x_k}{2} \int_0^\infty \frac{e^{-\overset{k}{\underset{j=1}{\sum}} x_j^2 /20t}}{t^{\overset{k}{\underset{j=1}{\sum}} (\lambda_j +1/2)+1}} dt\int_{2x_{k+1}}^\infty \ldots \int_{2x_{n}}^\infty \frac{|g(y)|}{\overset{n}{\underset{j=k+1}{\prod}}y_j^{2\lambda_j+1}} \prod_{j=1}^n y_j^{2\lambda_j}dy\\
      \leq & C \frac{1}{\Big(\overset{k}{\underset{j=1}{\sum}} x_j^2\Big)^{\overset{k}{\underset{j=1}{\sum}}(\lambda_j+1/2)}} \int_0^\frac{x_1}{2} \ldots \int_0^\frac{x_k}{2}\left( \int_{2x_{k+1}}^\infty \ldots \int_{2x_{n}}^\infty  \frac{|g(y)|}{y_{k+1}\ldots y_n} dy_n\ldots dy_{k+1} \right)\\
      & \times  \prod_{j=1}^k y_j^{2\lambda_j}dy_k\ldots dy_1, \quad x\in (0,\infty)^n.
   \end{align*}
   According to \cite[Proposition 1]{Din} and by taken into account the $L^p$-boundedness properties of the operator $H_{\lambda_1, \ldots, \lambda_k}$ and $\mathcal{S}_{n-k}$ we conclude that the operator $S_{\lambda_1,\ldots,\lambda_n}^{k,k,i}$ is bounded from
   $L^p((0,\infty)^n, \overset{n}{\underset{j=1}{\prod}} x_j^{2\lambda_j}dx)$ into itself, for every $1<p<\infty,$ and from $L^1((0,\infty)^n, \overset{n}{\underset{j=1}{\prod}} x_j^{2\lambda_j}dx)$ into $L^{1,\infty}((0,\infty)^n, \overset{n}{\underset{j=1}{\prod}} x_j^{2\lambda_j}dx).$\\

   In a similar way we can see that if $0<k<i\leq n$ the operator $S_{\lambda_1,\ldots,\lambda_n}^{k,k,i}$ is bounded from $L^p((0,\infty)^n, \overset{n}{\underset{j=1}{\prod}} x_j^{2\lambda_j}dx)$ into itself, for every $1<p<\infty,$ and from $L^1((0,\infty)^n, \overset{n}{\underset{j=1}{\prod}} x_j^{2\lambda_j}dx)$ into $L^{1,\infty}((0,\infty)^n, \overset{n}{\underset{j=1}{\prod}} x_j^{2\lambda_j}dx).$\\

   Let $1\leq i \leq k< n.$ By using \eqref{I1}, \eqref{I2}, \eqref{2.33} and \eqref{H4}, we get
   \begin{align}\label{2.37}
      \left|S_{\lambda_1,\ldots,\lambda_n}^{k,n,i} (g)(x)\right| \leq &C \int_0^\frac{x_1}{2} \ldots \int_0^\frac{x_k}{2} \int_\frac{x_{k+1}}{2}^{2x_{k+1}}\ldots \int_\frac{x_{n}}{2}^{2x_{n}}\int_0^\infty \frac{e^{-\overset{k}{\underset{j=1}{\sum}} x_j^2 /20t}}{t^{\overset{k}{\underset{j=1}{\sum}} (\lambda_j +1/2)+1}} \nonumber\\
      & \times \prod_{j=k+1}^n \left( \frac{1}{x_j^{2\lambda_j+1}} + (x_jy_j)^{-\lambda_j}\frac{e^{-(x_j-y_j)^2/4t}}{\sqrt{t}}\right) dt |g(y)| \prod_{j=1}^n y_j^{2\lambda_j}dy, \quad x\in (0,\infty)^n.
   \end{align}
   Suppose that $k<l\leq n,$ $l\in \mathbb{N},$ and define
   \begin{align*}
      \mathbb{S}_{\lambda_1,\ldots,\lambda_n}^{k,l} (g)(x) =&\int_0^\frac{x_1}{2} \ldots \int_0^\frac{x_k}{2} \int_\frac{x_{k+1}}{2}^{2x_{k+1}}\ldots \int_\frac{x_{n}}{2}^{2x_{n}}\int_0^\infty \frac{e^{-(\overset{k}{\underset{j=1}{\sum}} x_j^2 + \overset{l}{\underset{j=k+1}{\sum}} (x_j-y_j)^2)/20t}}{t^{\overset{k}{\underset{j=1}{\sum}} (\lambda_j +1/2)+1+ (l-k)/2}}dt\nonumber\\
      & \times \prod_{j=k+1}^l (x_jy_j)^{-\lambda_j} \prod_{j=l+1}^n  \frac{1}{x_j^{2\lambda_j+1}}g(y) \prod_{j=1}^n y_j^{2\lambda_j}dy, \quad x\in (0,\infty)^n.
   \end{align*}
   We have that, for each  $ x\in (0,\infty)^n,$
   \begin{align}\label{2.38}
      \big|\mathbb{S}_{\lambda_1,\ldots,\lambda_n}^{k,l}& (g)(x)\big| \leq C\int_0^\frac{x_1}{2} \ldots \int_0^\frac{x_k}{2} \int_\frac{x_{k+1}}{2}^{2x_{k+1}}\ldots \int_\frac{x_{l}}{2}^{2x_{l}} \frac{\overset{l}{\underset{j=k+1}{\prod}} (x_jy_j)^{-\lambda_j}}{ (\overset{k}{\underset{j=1}{\sum}} x_j^2 + \overset{l}{\underset{j=k+1}{\sum}} (x_j-y_j)^2  )^{\overset{k}{\underset{j=1}{\sum}} (\lambda_j+1/2)+ (l-k)/2}}\nonumber\\
      & \times \left( \frac{1}{\overset{n}{\underset{j=l+1}{\prod}} x_j^{2\lambda_j+1}}\int_\frac{x_{l+1}}{2}^{2x_{l+1}}\ldots \int_\frac{x_{n}}{2}^{2x_{n}}|g(y)| \prod_{j=l+1}^n y_j^{2\lambda_j}dy_n\ldots dy_{l+1} \right) \prod_{j=1}^l y_j^{2\lambda_j}dy_l\ldots dy_1.
   \end{align}
   Assume that $r,s\in \mathbb{N},$ $0<s<r,$ and $\beta_j >-1/2,$ $j=1,\ldots, r.$ We consider the operator
    \begin{align*}
      Y_{\beta_1,\ldots,\beta_r}^{s} (g)(x) =&\int_0^\frac{x_1}{2} \ldots \int_0^\frac{x_s}{2} \int_\frac{x_{s+1}}{2}^{2x_{s+1}}\ldots \int_\frac{x_{r}}{2}^{2x_{r}} \frac{\overset{r}{\underset{j=s+1}{\prod}} (x_jy_j)^{-\beta_j}}{ (\overset{s}{\underset{j=1}{\sum}} x_j^2 + \overset{r}{\underset{j=s+1}{\sum}} (x_j-y_j)^2  )^{\overset{s}{\underset{j=1}{\sum}} (\lambda_j+1/2)+ (r-s)/2}}\\
      & \times g(y) \prod_{j=1}^r y_j^{2\lambda_j}dy, \quad x\in(0,\infty)^r.
   \end{align*}
   By proceeding as in the proof of Case 3 in \cite{NS1} we can see that the operator $Y_{\beta_1,\ldots,\beta_r}^s$ is bounded from
   $L^1((0,\infty)^r, \overset{r}{\underset{j=1}{\prod}} x_j^{2\beta_j}dx)$ into $L^{1,\infty}((0,\infty)^r, \overset{r}{\underset{j=1}{\prod}} x_j^{2\beta_j}dx).$
   Since the operator $Z_{\lambda_{l+1},\ldots,\lambda_n}$ is bounded from
   $L^1((0,\infty)^r, \overset{r}{\underset{j=1}{\prod}} x_j^{2\beta_j}dx)$ into itself, by \cite[Proposition 1]{Din},
   we deduce from \eqref{2.38} that the operator $\mathbb{S}_{\lambda_1,\ldots,\lambda_n}^{k,l}$ is bounded from
   $L^1((0,\infty)^n, \overset{n}{\underset{j=1}{\prod}} x_j^{2\lambda_j}dx)$ into $L^{1,\infty}((0,\infty)^n, \overset{n}{\underset{j=1}{\prod}} x_j^{2\lambda_j}dx).$\\

   On the other hand we also have that
   \begin{align*}
      \big|\mathbb{S}_{\lambda_1,\ldots,\lambda_n}^{k,l}& (g)(x)\big| \leq C\int_0^\frac{x_1}{2} \ldots \int_0^\frac{x_k}{2} \int_0^\infty \frac{e^{-\overset{k}{\underset{j=1}{\sum}} x_j^2 /20t}}{t^{\overset{k}{\underset{j=1}{\sum}} (\lambda_j +1/2)+1}} dt \left\{ \frac{1}{\overset{n}{\underset{j=l+1}{\prod}} x_j^{2\lambda_j+1}}\int_\frac{x_{l+1}}{2}^{2x_{l+1}}\ldots \int_\frac{x_{n}}{2}^{2x_{n}}\right. \\
      &  \left( \sup_{t>0} \int_\frac{x_{k+1}}{2}^{2x_{k+1}}\ldots \int_\frac{x_{l}}{2}^{2x_{l}} \prod_{j=k+1}^l (x_jy_j)^{-\lambda_j} \frac{e^{-(x_j-y_j)^2/4t}}{\sqrt{t}}|g(y)| \prod_{j=k+1}^l y_j^{2\lambda_j}d{y_l}\ldots d{y_{k+1}} \right)\\
      & \times \left. \prod_{j=l+1}^n y_j^{2\lambda_j} dy_n \ldots dy_{l+1}\right\} \prod_{j=1}^k y_j^{2\lambda_j} dy_1 \ldots dy_{k}\\
      \leq & C \frac{1}{\overset{k}{\underset{j=1}{\prod}} x_j^{2\lambda_j+1}}\int_0^\frac{x_1}{2} \ldots \int_0^\frac{x_k}{2} \left\{ \frac{1}{\overset{n}{\underset{j=l+1}{\prod}} x_j^{2\lambda_j+1}}\int_\frac{x_{l+1}}{2}^{2x_{l+1}}\ldots \int_\frac{x_{n}}{2}^{2x_{n}} \left( \sup_{t>0} \int_\frac{x_{k+1}}{2}^{2x_{k+1}}\ldots \int_\frac{x_{l}}{2}^{2x_{l}}\right. \right.\\
      & \times \left. \left.\prod_{j=k+1}^n (x_jy_j)^{-\lambda_j} \frac{e^{-(x_j-y_j)^2/4t}}{\sqrt{t}}|g(y)| \prod_{j=k+1}^l y_j^{2\lambda_j}d{y_l}\ldots d{y_{k+1}} \right) \prod_{j=l+1}^n y_j^{2\lambda_j} dy_n \ldots dy_{l+1}\right\}\\
      &\times \prod_{j=1}^k y_j^{2\lambda_j} dy_1 \ldots dy_{k}, \quad x\in (0,\infty)^n.\\
   \end{align*}
   According to the $L^p$-boundedness properties of the operators $\Omega_{\beta_1,\ldots,\beta_r}$, $Z_{\beta_1,\ldots,\beta_r}$,  and $H_{\beta_1,\ldots,\beta_r}$, $\beta_j>-1/2,$ $j=1,\ldots,r$, and $r\in \mathbb{N}$, by using
   \cite[Proposition 1]{Din}, we obtain that $\mathbb{S}_{\lambda_1,\ldots,\lambda_n}^{k,l}$ is bounded from
   $L^p((0,\infty)^n, \overset{n}{\underset{j=1}{\prod}} x_j^{2\lambda_j}dx)$ into itself, for every $1<p<\infty.$\\

   We conclude that $S_{\lambda_1,\ldots,\lambda_n}^{k,n,i}$ is bounded from $L^p((0,\infty)^n, \overset{n}{\underset{j=1}{\prod}} x_j^{2\lambda_j}dx)$ into itself, for every $1<p<\infty,$ and from $L^1((0,\infty)^n, \overset{n}{\underset{j=1}{\prod}} x_j^{2\lambda_j}dx)$ into $L^{1,\infty}((0,\infty)^n, \overset{n}{\underset{j=1}{\prod}} x_j^{2\lambda_j}dx),$ provided that $1\leq i\leq k< n.$\\

   From \eqref{Z0} and \eqref{2.27} we get for $\alpha>-1/2$
   \begin{equation}\label{2.39}
      \left| \frac{\partial}{\partial t}W_t^\alpha (u,v)\right| \leq C \left( (uv)^{-\alpha}\frac{e^{-(u-v)^2/10t}}{t^{3/2}}+ \frac{e^{-(u^2+v^2)/10t}}{t^{\alpha+ 3/2}} \right), \quad t,u,v \in (0,\infty).
   \end{equation}
   Suppose now that $1\leq k<i \leq n.$ By using \eqref{I1}, \eqref{I2}, \eqref{2.33} and \eqref{2.39} we obtain
   \begin{align*}
      \left|S_{\lambda_1,\ldots,\lambda_n}^{k,n,i} (g)(x)\right| \leq & C \int_0^\frac{x_1}{2} \ldots \int_0^\frac{x_k}{2} \int_\frac{x_{k+1}}{2}^{2x_{k+1}}\ldots \int_\frac{x_{n}}{2}^{2x_{n}}\int_0^\infty \frac{e^{-\overset{k}{\underset{j=1}{\sum}} x_j^2 /20t}}{t^{\overset{k}{\underset{j=1}{\sum}} (\lambda_j +1/2)}}\nonumber\\
      & \times \prod_{j=k+1,j\neq i}^n \left( \frac{1}{x_j^{2\lambda_j+1}} + (x_jy_j)^{-\lambda_j}\frac{e^{(x_j-y_j)^2/10t}}{\sqrt{t}}\right)\left( (x_iy_i)^{-\lambda_i}\frac{e^{-(x_i-y_i)^2/10t}}{t^{3/2}}\right. \\
      & +\left. \frac{e^{-(x_i^2+y_i^2)/4t}}{t^{\lambda_i+ 3/2}} \right)  dt |g(y)| \prod_{j=1}^n y_j^{2\lambda_j}dy\\
      & \leq C \int_0^\frac{x_1}{2} \ldots \int_0^\frac{x_k}{2} \int_\frac{x_{k+1}}{2}^{2x_{k+1}}\ldots \int_\frac{x_{n}}{2}^{2x_{n}}\int_0^\infty \frac{e^{-\overset{k}{\underset{j=1}{\sum}} x_j^2 /20t}}{t^{\overset{k}{\underset{j=1}{\sum}} (\lambda_j +1/2)+1}}\nonumber\\
      & \times \prod_{j=k+1}^n \left( \frac{1}{x_j^{2\lambda_j+1}} + (x_jy_j)^{-\lambda_j}\frac{e^{(x_j-y_j)^2/10t}}{\sqrt{t}}\right) dt |g(y)| \prod_{j=1}^n y_j^{2\lambda_j}dy, \quad x\in (0,\infty)^n.
   \end{align*}
   Then, as above, $S_{\lambda_1,\ldots,\lambda_n}^{k,n,i}$ is bounded from $L^p((0,\infty)^n, \overset{n}{\underset{j=1}{\prod}} x_j^{2\lambda_j}dx)$ into itself, for every $1<p<\infty,$ and from $L^1((0,\infty)^n, \overset{n}{\underset{j=1}{\prod}} x_j^{2\lambda_j}dx)$ into $L^{1,\infty}((0,\infty)^n, \overset{n}{\underset{j=1}{\prod}} x_j^{2\lambda_j}dx),$ when $1\leq k< i\leq n.$\\

   By proceeding in a similar way we can see that the operator $S_{\lambda_1,\ldots,\lambda_n}^{k,l,i}$ is bounded from $L^p((0,\infty)^n, \overset{n}{\underset{j=1}{\prod}} x_j^{2\lambda_j}dx)$ into itself, for every $1<p<\infty,$ and from $L^1((0,\infty)^n, \overset{n}{\underset{j=1}{\prod}} x_j^{2\lambda_j}dx)$ into $L^{1,\infty}((0,\infty)^n, \overset{n}{\underset{j=1}{\prod}} x_j^{2\lambda_j}dx),$ when $0\leq k<l<n,$ and $i=1,\ldots, n.$\\

   By \eqref{2.31.2} we conclude that $S_{\lambda_1,\ldots,\lambda_n}^{k,l},$ $0\leq k\leq l\leq n,$ $k\neq 0$ or $l\neq n,$ is bounded from $L^p((0,\infty)^n, \overset{n}{\underset{j=1}{\prod}} x_j^{2\lambda_j}dx)$ into itself, for every $1<p<\infty,$ and from $L^1((0,\infty)^n, \overset{n}{\underset{j=1}{\prod}} x_j^{2\lambda_j}dx)$ into $L^{1,\infty}((0,\infty)^n, \overset{n}{\underset{j=1}{\prod}} x_j^{2\lambda_j}dx).$ Then, the operator $\mathcal{G}_{\lambda_1,\ldots,\lambda_n}$ has the same $L^p$-boundedness properties.\\

   From \eqref{2.22} it follows that the maximal operator $T_{\lambda_1,\ldots,\lambda_n}^{m,*}$ is bounded from $L^p((0,\infty)^n, \overset{n}{\underset{j=1}{\prod}} x_j^{2\lambda_j}dx)$ into itself, for every $1<p<\infty,$ and from $L^1((0,\infty)^n, \overset{n}{\underset{j=1}{\prod}} x_j^{2\lambda_j}dx)$ into $L^{1,\infty}((0,\infty)^n, \overset{n}{\underset{j=1}{\prod}} x_j^{2\lambda_j}dx).$\\

   Thus the proof of this proposition is completed.
   \end{proof}

   From Proposition \ref{maximal} we can deduce by using standard arguments the following result.
   \begin{Prop}\label{limit}
     Let $\lambda_j >-1/2$, $j=1, \dots, n$. For every $f \in L^p((0,\infty)^n, \overset{n}{\underset{j=1}{\prod}} x_j^{2\lambda_j }dx),$ $1 \leq p < \infty,$
     the limit
     $$\lim_{\varepsilon \rightarrow 0^+} \int_{(0,\infty)^n, \ \lvert x-y \rvert>\varepsilon}
       f(y) K_{\lambda_1, \dots, \lambda_n}^\phi (x,y) \prod_{j=1}^n y_j^{2\lambda_j} dy, $$
     exists, for almost all $x \in (0,\infty)^n$. Here $K_{\lambda_1, \dots, \lambda_n}^\phi (x,y)$ is defined as in Proposition~\ref{maximal}. Moreover,
     the operator $\mathbb{T}_{\lambda_1, \dots, \lambda_n}^m$ defined by
     $$\mathbb{T}_{\lambda_1, \dots, \lambda_n}^m (f)(x)= \lim_{\varepsilon \rightarrow 0^+} \int_{(0,\infty)^n, \ \lvert x-y \rvert>\varepsilon}
       f(y) K_{\lambda_1, \dots, \lambda_n}^\phi (x,y) \prod_{j=1}^n y_j^{2\lambda_j} dy, \ \text{a.e. } x \in (0,\infty)^n, $$
     is bounded from $L^p((0,\infty)^n, \overset{n}{\underset{j=1}{\prod}} x_j^{2\lambda_j }dx)$ into itself,
     for every $1<p<\infty,$ and from $L^1((0,\infty)^n,$ $ \overset{n}{\underset{j=1}{\prod}} x_j^{2\lambda_j }dx)$ into
     $L^{1,\infty}((0,\infty)^n, \overset{n}{\underset{j=1}{\prod}} x_j^{2\lambda_j }dx).$
   \end{Prop}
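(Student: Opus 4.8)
The plan is to derive the result from the maximal theorem (Proposition~\ref{maximal}) by the classical scheme that deduces almost-everywhere convergence of a family of truncations from (i) a maximal inequality and (ii) convergence on a dense subspace. Throughout write $d\mu(x)=\prod_{j=1}^n x_j^{2\lambda_j}\,dx$ and, for $\varepsilon>0$,
$$T_\varepsilon f(x)=\int_{(0,\infty)^n,\ |x-y|>\varepsilon} f(y)\,K_{\lambda_1,\dots,\lambda_n}^\phi(x,y)\,\prod_{j=1}^n y_j^{2\lambda_j}\,dy ,$$
so that $T^{m,*}_{\lambda_1,\dots,\lambda_n}f=\sup_{\varepsilon>0}|T_\varepsilon f|$ is exactly the maximal operator that Proposition~\ref{maximal} shows to be bounded on $L^p(d\mu)$ for $1<p<\infty$ and from $L^1(d\mu)$ into $L^{1,\infty}(d\mu)$.

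First I would settle convergence on a dense class. For $f\in C_c^\infty((0,\infty)^n)$ the existence of $\lim_{\varepsilon\to0^+}T_\varepsilon f(x)$ for almost every $x$ is already contained in Proposition~\ref{integral representation}: that proposition exhibits $T_{\lambda_1,\dots,\lambda_n}^m f$ as the limit of $-(\alpha(\varepsilon)f+T_\varepsilon f)$ with $\alpha$ bounded, and under the regularity of $\phi$ considered there it identifies the principal value limit explicitly (as $C\phi(0^+)f-T_{\lambda_1,\dots,\lambda_n}^m f$). Since $C_c^\infty((0,\infty)^n)$ is dense in $L^p(d\mu)$ for every $1\le p<\infty$, this provides the required dense subspace on which the truncated integrals converge pointwise a.e.

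Next comes the oscillation argument, which is the heart of the deduction. For $f\in L^p(d\mu)$ set
$$\Omega f(x)=\limsup_{\varepsilon,\varepsilon'\to0^+}\bigl|T_\varepsilon f(x)-T_{\varepsilon'}f(x)\bigr| ,$$
so that the limit in the statement exists a.e. precisely when $\Omega f=0$ a.e. Given $\delta>0$, choose $g\in C_c^\infty((0,\infty)^n)$ with $\|f-g\|_{L^p(d\mu)}<\delta$. Because $\Omega g=0$ a.e. by the previous step and $\Omega$ is subadditive, the pointwise bound $\Omega f\le \Omega(f-g)\le 2\,T^{m,*}_{\lambda_1,\dots,\lambda_n}(f-g)$ holds. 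For $1<p<\infty$ the $L^p(d\mu)$-boundedness of the maximal operator gives $\|\Omega f\|_{L^p(d\mu)}\le 2\|T^{m,*}_{\lambda_1,\dots,\lambda_n}(f-g)\|_{L^p(d\mu)}\le C\delta$, while for $p=1$ its weak type $(1,1)$ gives $\mu(\{\Omega f>\gamma\})\le (C/\gamma)\|f-g\|_{L^1(d\mu)}\le C\delta/\gamma$ for every $\gamma>0$. Letting $\delta\to0^+$ forces $\Omega f=0$ a.e., so the principal value limit defining $\mathbb{T}^m_{\lambda_1,\dots,\lambda_n}f(x)$ exists for almost every $x$.

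Finally, the mapping properties are immediate: for a.e. $x$ one has $|\mathbb{T}^m_{\lambda_1,\dots,\lambda_n}f(x)|=\bigl|\lim_{\varepsilon\to0^+}T_\varepsilon f(x)\bigr|\le T^{m,*}_{\lambda_1,\dots,\lambda_n}f(x)$, so $\mathbb{T}^m_{\lambda_1,\dots,\lambda_n}$ inherits from Proposition~\ref{maximal} boundedness on $L^p(d\mu)$ for $1<p<\infty$ and from $L^1(d\mu)$ into $L^{1,\infty}(d\mu)$. I expect the only genuinely delicate point to be the dense-class convergence, i.e. ensuring that the bare principal value (and not merely the corrected limit $\alpha(\varepsilon)f+T_\varepsilon f$) converges for $f\in C_c^\infty$; this is exactly the information recorded in Proposition~\ref{integral representation}, and once it is in hand the maximal inequality reduces everything else to the routine oscillation estimate above.
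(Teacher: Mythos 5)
Your overall scheme (the maximal bounds of Proposition~\ref{maximal}, convergence on a dense class, the oscillation functional $\Omega$, and the pointwise domination $|\mathbb{T}_{\lambda_1,\dots,\lambda_n}^m f|\le T_{\lambda_1,\dots,\lambda_n}^{m,*}f$ for the mapping properties) is exactly the ``standard arguments'' the paper invokes in its one-line proof, and the oscillation step and the boundedness step are fine as written. The genuine gap is in your dense-class step, and it is precisely the point you flag as delicate but then claim is settled: Proposition~\ref{integral representation} does \emph{not} record, for general $\phi\in L^\infty(0,\infty)$, that $\lim_{\varepsilon\to0^+}T_\varepsilon g(x)$ exists a.e.\ for $g\in C_c^\infty((0,\infty)^n)$. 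It records that the \emph{corrected} family $\alpha(\varepsilon)g(x)+T_\varepsilon g(x)$ converges, with $\alpha$ merely bounded, and it yields the bare principal value only under the additional hypothesis that $\phi(0^+)$ exists. These are not interchangeable. By the paper's own computation, $\alpha(\varepsilon)=-M\int_0^{1/\varepsilon^2}\phi(s\varepsilon^2)e^{-1/4s}s^{-n/2-1}\,ds$ is a weighted mean of $\phi$ at scale $\varepsilon^2$; for a bounded $\phi$ oscillating slowly near the origin (for instance $\phi(t)=\sin(\log\log(e+1/t))$) this has no limit as $\varepsilon\to0^+$, and since Proposition~\ref{integral representation} gives $T_\varepsilon g(x)=-T_{\lambda_1,\dots,\lambda_n}^m g(x)-\alpha(\varepsilon)g(x)+o(1)$, the truncations then diverge at every $x$ with $g(x)\neq0$. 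In that situation your oscillation functional satisfies $\Omega g(x)=|g(x)|\limsup_{\varepsilon,\varepsilon'\to0^+}|\alpha(\varepsilon)-\alpha(\varepsilon')|\neq 0$ on a set of positive measure, so the dense-class convergence you need simply fails.

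There are two honest repairs. Either add the hypothesis that $\phi(0^+)$ exists, in which case Proposition~\ref{integral representation} really does give the dense-class convergence and your argument goes through verbatim; or run your oscillation argument on the corrected truncations $\widetilde T_\varepsilon f=T_\varepsilon f+\alpha(\varepsilon)f$, which satisfy $\sup_{\varepsilon>0}|\widetilde T_\varepsilon f|\le T_{\lambda_1,\dots,\lambda_n}^{m,*}f+\|\alpha\|_\infty|f|$ and hence enjoy the same strong and weak type bounds, and which do converge a.e.\ on $C_c^\infty((0,\infty)^n)$ by Proposition~\ref{integral representation}; this yields a.e.\ existence of $\lim_{\varepsilon\to0^+}\bigl(T_\varepsilon f+\alpha(\varepsilon)f\bigr)$ for all $f\in L^p(d\mu)$ together with the boundedness of the limiting operator, which is in fact the form the paper uses immediately after the proposition to extend $T_{\lambda_1,\dots,\lambda_n}^m$ to $L^p$. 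To be fair, this weak point is inherited from the paper itself, whose proof is only an appeal to standard arguments; but your write-up makes the specific claim that the bare principal-value convergence on $C_c^\infty$ ``is exactly the information recorded in Proposition~\ref{integral representation}'', and that claim is what the cited proposition does not support for general bounded $\phi$.
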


   Since $C_c^\infty((0,\infty)^n)$ is a dense subspace of $L^p((0,\infty)^n, \overset{n}{\underset{j=1}{\prod}} x_j^{2\lambda_j}dx),$ $1\leq p<\infty,$ it follow that,
    for every $f\in L^2((0,\infty)^n, \overset{n}{\underset{j=1}{\prod}} x_j^{2\lambda_j}dx),$
   $$T_{\lambda_1, \dots, \lambda_n}^m (f)(x)= -\lim_{\varepsilon \rightarrow 0^+} \Big(\int_{(0,\infty)^n, \ \lvert x-y \rvert>\varepsilon}
       f(y) K_{\lambda_1, \dots, \lambda_n}^\phi (x,y) \prod_{j=1}^n y_j^{2\lambda_j} dy+\alpha(\varepsilon)f(x)\Big), \ \text{a.e. } x \in (0,\infty)^n, $$
   where $\alpha$ is a bounded function on $(0,\infty)$, and $T_{\lambda_1,\ldots,\lambda_n}^m$ can be extended from $L^2((0,\infty)^n, \overset{n}{\underset{j=1}{\prod}} x_j^{2\lambda_j}dx)$ $\bigcap L^p((0,\infty)^n, \overset{n}{\underset{j=1}{\prod}} x_j^{2\lambda_j}dx)$ to $L^p((0,\infty)^n, \overset{n}{\underset{j=1}{\prod}} x_j^{2\lambda_j}dx)$ as a bounded operator from $L^p((0,\infty)^n, $ $\overset{n}{\underset{j=1}{\prod}} x_j^{2\lambda_j}dx)$ into itself, for every $1<p<\infty,$ and from $L^1((0,\infty)^n, \overset{n}{\underset{j=1}{\prod}} x_j^{2\lambda_j}dx)$ into $L^{1,\infty}((0,\infty)^n,$ $\overset{n}{\underset{j=1}{\prod}} x_j^{2\lambda_j}dx).$\\

   The proof of Theorem \ref{multiplier} is finished.

\end{document}